  \definecolor{b+}{rgb}{0.5, 0.5, 1}
  \definecolor{b-}{rgb}{0, 0, 0.5}
  \definecolor{r|b-}{rgb}{0.5, 0, 0.25}
  \definecolor{g|b-}{rgb}{0, 0.5, 0.25}
  \definecolor{shadecolor}{rgb}{0.75, 0.75, 1}
\def\bbC{\mathbb{C}}
\def\({\left(}
\def\){\right)}
\def\lbk{\left\{}
\def\rbk{\right\}}
\def\lmdl{\left\vert}
\def\rmdl{\right\vert}
\def\lnm{\left\Vert}
\def\rnm{\right\Vert}
\def\Om{\Omega}
\newcommand{\cl}[1]{\overline{#1}}
\DeclareMathOperator{\dom}{dom}
\theoremstyle{definition}
\newtheorem{deft}{Definition}[section]
\theoremstyle{definition}
\theoremstyle{definition}
\newtheorem{thm}{Theorem}[section]
\theoremstyle{definition}
\newtheorem{lem}{Lemma}[section]
\theoremstyle{definition}
\newtheorem{coro}{Corollary}[section]
\theoremstyle{definition}
\newtheorem{prop}{Proposition}[section]
\theoremstyle{definition}
\theoremstyle{definition}
\theoremstyle{definition}
\newtheorem{rmk}{Remark}[section]
\theoremstyle{definition}
\numberwithin{equation}{section}
\begin{document}

\title[Sobolev estimates for $\bar\partial$]{Sobolev regularity for the $\bar\partial$-Neumann operator and transverse vector fields}
\author{Qianyun Wang, Yuan Yuan, and Xu Zhang}
\address{School of Statistics and Mathematics, Shanghai Lixin University of Accounting and Finance, Shanghai 201209, China}
\email{wangqy1226@gmail.com}
\address{Institute for Theoretical Sciences, Westlake University, Hangzhou, Zhejiang 310030, China}
\email{yuanyuan@westlake.edu.cn}
\address{School of Mathematical Sciences,  Tongji University, Shanghai 200092, China.}
\email{xzhangmath@tongji.edu.cn}
\keywords{pseudoconnvex domain, $\bar\partial$-Neumannn operator, Sobolev regularity}
\subjclass[2020]{32W05, 35B65, 32T99}
\maketitle

\begin{abstract}
On a bounded smooth pseudoconvex domain in $\mathbb{C}^n$ with $n >2$, inspired by the compactness condition introduced by Yue Zhang, we present the new sufficient condition for the exact regularity of the $\bar\partial$-Neumann operator via the transverse vector fields. 
\end{abstract}

\section{introduction}
On a  bounded smooth pseudoconvex domain $\Omega$ in $\mathbb{C}^n$, $\bar\partial: L^2_{(0, q)}(\Omega) \rightarrow L^2_{(0, q+1)}(\Omega)$ and its Hilbert adjoint $\bar\partial^*: L^2_{(0, q+1)}(\Omega) \rightarrow L^2_{(0, q)}(\Omega)$ are both densely defined, closed, unbounded operators. The complex Laplacian $\Box _{q}: \bar\partial \bar\partial^* + \bar\partial^* \bar\partial:  L^2_{(0, q)}(\Omega) \rightarrow L^2_{(0, q)}(\Omega)$ is then a self-adjoint, densely defined, closed operator, whose domain of definition $\dom( \Box _{q})$ consists of elements in $L^2_{(0, q)}(\Omega)$ satisfying the $\bar\partial$-Neumann condition. By the fundamental $L^2$ estimates due to H\"ormander and Kohn,  $\Box _{q}$ admits a bounded inverse $N_q: L^2_{(0, q)}(\Omega) \rightarrow L^2_{(0, q)}(\Omega)$, called the $\bar\partial$-Neumann operator. The regularity of $N_q$ in various function spaces is one of the fundamental problems in several complex variables and partial differential equations. A linear operator $\mathcal T$ is called exactly regular on the $L^2$-Sobolev space (resp.  globally regular) if  $ \mathcal T: W^k_{(0, q)}(\overline{\Om}) \rightarrow W^k_{(0, q)}(\overline{\Om})$ is continuous for all $k \geq 0$   \big(resp.         $\mathcal T: C^\infty_{(0, q)}(\overline{\Om}) \rightarrow C^\infty_{(0, q)}(\overline{\Om})$      is continuous\big). In this paper, we focus on the exact 
regularity of the $\bar\partial$-Neumann operator $N_q$ on bounded smooth pseudoconvex domains in $\mathbb{C}^n$.

If $\Omega$ is strongly pseudoconvex, Kohn obtained the sub-elliptic estimates for the $\bar\partial$-Neumann operator $N_q$ in his foundational works \cite{Kohn63, Kohn64}. In particular, $N_q$ is exactly regular. However, Christ showed that  $N_q$ is not globally regular \cite{C96} on the worm domains of Diederich
and Fornaess \cite{DF77}, based on Barrett's striking irregularity result of the Bergman projection \cite{B92}. Since then, there have been extensive studies to characterize the domains on which the exact regularity of $N_q$ holds. 

On the other hand, Kohn and Nirenberg discovered that $N_q$ is exactly regular if it is compact on $L^2_{(0, q)}(\Omega)$ \cite{KN65}. The sufficient conditions for $N_q$ to be compact include property ($P_q$) defined by Catlin \cite{C84} and the generalization called property ($\widetilde{P}_q$) by McNeal  \cite{M02}, where both conditions require the existence of a family of functions with Hessian sufficiently large on $\partial\Omega$.  For the later progress on compactness of $N_q$, the interested reader may refer to the survey by Fu-Straube \cite{FS01}.  More recently, in order to find a sufficient condition for the compactness that is  invariant under biholomorphisms, Yue Zhang was led to define the variant of  
property ($\widetilde{P}$), called 
property ($\widetilde{P}^{\#}$) in \cite{YZ2021}. Moreover, Yue Zhang showed that if the domain satisfies 
property ($\widetilde{P}^{\#}_q$), then $N_q$ is compact \cite{YZ2021}.  
 
In the fundamental work \cite{BS91}, Boas and Straube introduced a family of vector fields $v_\varepsilon$ of $(1, 0)$-type on $\partial\Om$ that is almost holomorphic in the sense that $\left|d\rho([\bar\partial, v_\varepsilon])\right|< \varepsilon$ and derived the exact regularity of $N_q$ on $\Omega$. Boas-Staube's vector field method has many geometric consequences. For example,   Boas and Straube were able to deduce the exact regularity of $N_q$ via the cohomology of the submanifold in $\partial\Om$ consisting of infinite type points \cite{BS93}. Straube later introduced a sufficient condition for the exact regularity of $N_q$ by unifying the vector field
method in Boas-Straube and the compactness estimates \cite{S10}. 
In \cite{Kohn99}, Kohn discovered a profound relation between 
 the exact regularity of $N_q$ and the  Diederich–Fornæss index and his result was later improved in \cite{PZ14, L22}.
More recently, Harrington introduced in \cite{H11} another family of vector fields $v_\varepsilon$ of $(1, 0)$-type and a family of plurisubharmonic functions $\lambda_\varepsilon$ with self-bounded gradient, called a family 
of transverse vector fields satisfying property ($\widetilde{P}$),  such that the complex Hessian of  $\lambda_\varepsilon$  controls the $(0,1)$-form $\theta ^{v_{\varepsilon}}$ associated to $v_\varepsilon$ on $\partial\Om$. Harrington further obtained a fundamental result that if $\Omega$ satisfies his vector field condition, then $N_q$ is exactly regular \cite{H11}. 
Since Harrington's condition does not assume that $v_\varepsilon (\rho)$ is uniformly bounded from below by a positive constant, the result generalizes the  exact regularity result of $N_q$ in \cite{BS91} and also improve Kohn's result  in \cite{Kohn99} on the Diederich–Fornæss index. A more precise
conjecture states that if the Diederich–Fornæss index of $\Om \subset \mathbb{C}^n$ is 1, then $N_q$ is exactly regular.  Very recently, this conjecture was proved by Liu and Straube for $n=2$, and for general $n$, they also made substantial progress \cite{LS25}. 
 For other related results on the regularity of the $\bar\partial$-Neumann problem, the interested reader may refer to \cite{BS99, HM06, AH19, G19, HL20, CH21, Z24, S25} and references therein. 

\medskip


In this paper, inspired by 
property ($\widetilde{P}^{\#}$) introduced by Yue Zhang in \cite{YZ2021} and Harrington's vector fields condition in \cite{H11}, we define a family of vector fields $v_{\alpha, \varepsilon}$ satisfying 
property ($\widetilde{P}^{\#}$). 
In order to derive the exact regularity, we use the method of elliptic regularization (cf. \cite{KN65, S10}) and the key is to estimate the commutators $[\bar\partial^*, v_{\alpha, \varepsilon}], [\bar\partial, v_{\alpha, \varepsilon}]$. Using Harrington's strategy in \cite{H11},  the commutators can be controlled using $\theta^{v_{\alpha, \varepsilon}}$ via another operator $D_{X_{\alpha, \varepsilon}}$, 
so that our unweighted $L^2$ estimates are applicable.  
The reader may refer to \autoref{vf} for notations. 


\begin{thm}   
  Let $n > 2$ and $\Om \subset \bbC ^{n}$ be a smooth bounded pseudoconvex domain 
 that possesses a family of transverse vector fields satisfying property strong  (${\widetilde P}_{q}^{\#}$) 
 with $1 < q \leq n-1$. 
  Then the $\bar{\partial}$-Neumann operator $N _{q'}$ is continuous on $W _{\(0, q'\)}  ^{k} \(\Om\)$ for any $k \geq 0$ and any $q'\geq q$.
 \end{thm}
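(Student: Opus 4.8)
The plan is to follow Harrington's strategy from \cite{H11}, substituting the $({\widetilde P}^{\#}_{q})$-type condition on the transverse vector fields for his self-bounded-gradient Hessian condition and running the resulting estimates in the spirit of Straube's unification \cite{S10} of the vector-field method with compactness estimates. I would begin with the standard reduction: by elliptic regularization and the now-routine functional-analytic argument (as in \cite{BS91,H11}), it suffices to prove, for every integer $k\geq 0$ and every $q'$ with $q\leq q'\leq n-1$, the a priori estimate
\[
  \|u\|_k \lesssim \|\bar\partial u\|_k + \|\bar\partial^* u\|_k ,
  \qquad u\in C^{\infty}_{(0,q')}(\overline{\Omega})\cap\dom(\bar\partial)\cap\dom(\bar\partial^*);
\]
continuity of $N_{q'}$ on $W^{k}_{(0,q')}(\Omega)$ for all $k$ then follows from the graph-norm comparison together with the identities linking $N_{q'}$ to $\bar\partial$, $\bar\partial^*$ and the associated projections. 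I would also observe at the outset that Property $({\widetilde P}^{\#}_{q})$ for the transverse vector fields implies the analogous property at every level $q'\geq q$, since the relevant sum of the smallest eigenvalues of the complex Hessian is nondecreasing in the number of eigenvalues summed; hence it suffices to treat one fixed $q'$.

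The core of the argument is a weighted estimate near $\partial\Omega$. For each $\varepsilon>0$ I would extract from Property $({\widetilde P}^{\#}_{q})$ a $(1,0)$ vector field $v_\varepsilon$, transverse to $\partial\Omega$ on a neighbourhood $U$, together with a function $\lambda_\varepsilon$ on $U$ that is plurisubharmonic with self-bounded gradient and whose complex Hessian, acting on $(0,q')$-forms, dominates $\varepsilon^{-1}$ times the squared pointwise norm of the obstruction $(0,1)$-form $a_\varepsilon = d\rho\big([\bar\partial,v_\varepsilon]\big)$ that measures the failure of $v_\varepsilon$ to be holomorphic in the complex-normal direction. Feeding $\lambda_\varepsilon$ into the twisted Morrey--Kohn--H\"ormander identity as the twist/weight, I would obtain on $(0,q')$-forms $\varphi$ supported near $\partial\Omega$ a compactness-type inequality
\[
  \|\varphi\|^2 \lesssim \varepsilon\big(\|\bar\partial\varphi\|^2 + \|\bar\partial^*\varphi\|^2\big) + C_\varepsilon\|\varphi\|_{-1}^2 ,
\]
together with the stronger statement that the positive Hessian term left on the right can additionally absorb the obstruction form $a_\varepsilon$ with the small factor $\varepsilon$. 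The role of the $\#$ in the hypothesis is precisely to make this step work without assuming a positive lower bound on the normal component of $\lambda_\varepsilon$: the self-bounded-gradient property supplies the first-order control that in \cite{BS91} came from that lower bound.

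Next I would carry out the commutator argument for tangential Sobolev norms. Fixing a special boundary frame $\omega^1,\dots,\omega^n$ with $\omega^n$ the complex-normal direction and letting $D^k$ be a tangential differential operator of order $k$ built from the genuinely tangential vector fields together with the ``bad'' one in the $v_\varepsilon$-direction, I would apply the weighted estimate to $D^k u$. Commuting $D^k$ past $\bar\partial$ and $\bar\partial^*$ produces error terms; the only dangerous one carries the coefficient $d\rho([\bar\partial,v_\varepsilon])$, that is, the form $a_\varepsilon$, and is absorbed by the Hessian term of the previous step at the cost of a factor $\varepsilon$, while the remaining commutator terms are of lower tangential order (closed off by induction on $k$), or supported in the interior (controlled by interior elliptic regularity), or genuinely tangential (handled directly). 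Summing over the pieces of $D^k$ controls the full tangential $k$-norm, and ellipticity of the complex Laplacian in the normal direction, combined with the $\bar\partial$-Neumann boundary condition, upgrades this to control of $\|u\|_k$. Following Harrington, I would route this last passage through the duality argument of \cite{H11}, which is what delivers the clean estimate $\|u\|_k \lesssim \|\bar\partial u\|_k + \|\bar\partial^* u\|_k$ rather than one with a spare $\|u\|_{k-1}$ term and, once more, what lets one dispense with any normal lower bound on $\lambda_\varepsilon$.

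I expect the weighted estimate to be the main obstacle: making precise, with the right $\varepsilon$-dependence and uniformly on $(0,q')$-forms for every $q'\geq q$, the claim that the complex Hessian of $\lambda_\varepsilon$ controls the obstruction form $a_\varepsilon$, and then verifying that the twisted Morrey--Kohn--H\"ormander identity permits the \emph{simultaneous} absorption of the compactness error and of this obstruction term as $\varepsilon\to 0$. Keeping the bookkeeping of the commutator errors consistent with that absorption --- in particular checking that nothing along the way forces a positive lower bound on the normal component of $\lambda_\varepsilon$ --- is the delicate point, and it is precisely there that the $\#$-version of Zhang's property and Harrington's duality device must be made to work together.
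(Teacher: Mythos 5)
Your outline follows the same overall strategy as the paper---Harrington's scheme, with the obstruction form $\theta^{v_{\alpha,\varepsilon}}$ absorbed via the $\varepsilon$-domination built into the $\#$-property, and a duality step to avoid any lower bound on the normal component of $\lambda_\varepsilon$---and your two framing choices (proving the graph-norm estimate directly on $(0,q')$-forms, and passing from a priori to genuine estimates by elliptic regularization) are workable variants of what the paper actually does, namely estimating the Bergman projection $B_{q'-1}$ through a graph-norm estimate on $(0,q'-1)$-forms and invoking the Boas--Straube equivalence, and obtaining genuine estimates via Kohn's exhaustion $\Omega_\delta$ of \autoref{KohnA} (note that \autoref{property Pq} is tailored to the exhaustion route, since (\ref{boundary estimate''}) is required to hold on $\partial\Omega_\delta$ for a sequence $\delta\to 0^{+}$).

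However, the step you yourself flag as ``the main obstacle'' is genuinely missing, and it is precisely where the $\#$-structure does its work; it cannot be supplied by the ordinary twisted Morrey--Kohn--H\"ormander identity with a plurisubharmonic, self-bounded-gradient weight. The quantity that dominates $\varepsilon^{-1}\lmdl\theta^{v_{\alpha,\varepsilon}}\rmdl^{2}$ in \autoref{property Pq}(iii) is not the complex Hessian of $\lambda_\varepsilon$ acting on $(0,q')$-forms but the modified form $\mathcal{X}^{q}(\lambda_\varepsilon)$, in which the diagonal entries over the index set $I_s$ have been subtracted; this form need not be controlled by $\partial\bar\partial\lambda_\varepsilon$ alone, and it only becomes usable because the unified estimate of \autoref{basic estimate thm} produces a matching subtracted boundary term whose sign is guaranteed by (\ref{boundary estimate''}) (cf.\ \autoref{boundary term}). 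The paper packages this into the positive operator $M^{\alpha}_{\varepsilon,q}$ and the three estimates (\ref{M estimate2})--(\ref{theta estimate2}): $\(M u,u\)\lesssim\lnm\bar\partial u\rnm^{2}+\lnm\bar\partial^{*}u\rnm^{2}$, the dual bound $\lnm f\rnm^{2}\lesssim\(\(M\)^{-1}\bar\partial f,\bar\partial f\)$ for $f\perp\ker\bar\partial$, and $\(\(M\)^{-1}\(\theta^{v_{\alpha,\varepsilon}}\wedge f\),\theta^{v_{\alpha,\varepsilon}}\wedge f\)\lesssim\varepsilon\lnm f\rnm^{2}$; the second is the duality device and the third is the precise form of your ``absorption with factor $\varepsilon$.'' Your sketch asserts the conclusions of these estimates without the mechanism producing them, and your one-line justification for propagating the property to $q'>q$ (``the sum of the smallest eigenvalues is nondecreasing'') also needs the observation of \autoref{increasing} that $\mathcal{X}^{q}(\lambda_\varepsilon)\geq 0$ forces the $(q+1)$-st smallest eigenvalue to be non-negative. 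So the architecture is correct, but the load-bearing estimates are deferred rather than proved.
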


 \begin{thm}   
  Let $n > 2$ and $\Om \subset \bbC ^{n}$ be a smooth bounded pseudoconvex domain 
 that possesses a family of transverse vector fields satisfying property (${\widetilde P}_{n-1}^{\#}$). 
  Then the $\bar{\partial}$-Neumann operator $N _{n-1}$ is continuous on $W _{\(0, n-1\)}  ^{k} \(\Om\)$ for any $k \geq 0$. 
 \end{thm}
 
 We would like to point out that 
 if $\Omega$ satisfies Yue Zhang's 
 property ($\widetilde{P}^{\#}_{q}$) (resp. property ($\widetilde{P}^{\#}_{n-1}$)), then $\Omega$ possesses
 a family of vector fields satisfying 
 property strong ($\widetilde{P}^{\#}_{q}$) (resp. property ($\widetilde{P}^{\#}_{n-1}$)). 
Moreover, the condition  of vector fields satisfying property ($\widetilde{P}^{\#}_{n-1}$) is preserved under biholomorphisms in the following sense: if $\Omega$ possesses
 a family of vector fields satisfying property ($\widetilde{P}^{\#}_{n-1}$) and $F: \Omega \rightarrow \Omega'$ is a biholomorphism that extends to a CR diffeomorphism from $\partial\Omega$ to $\partial\Omega'$, then 
 $\Omega'$ also possesses
 a family of vector fields satisfying property ($\widetilde{P}^{\#}_{n-1}$).
 
 \medskip

 \medskip

{\bf Notation:} Throughout the paper, we use $Q_1 \leq C_\varepsilon Q_2$ to indicate that the positive constant $C_\varepsilon$ depends on the parameter $\varepsilon$. If the positive constant is independent of parameters $\varepsilon, \delta$, we simply denote the estimate by 
$Q_1 \lesssim Q_2$. 
 


\section{Preliminaries and classical estimates}\label{prel}

\subsection{Preliminaries}
Let $\Om$ be a bounded pseudoconvex domain in $\bbC ^{n}$ 
and $\rho$ be a smooth defining function satisfying $\Om = \{z: \rho < 0 \}$ and $d\rho \neq 0$ on the boundary $\partial \Om$. 
For $\(0, q\)$-forms $u, v \in L _{\(0, q\)} ^{2} \(\Om\)$ with $u = \sum' _{J} u _{J} d \bar{z} _{J}, v = \sum' _{I} v _{I}d\bar{z} _{I}$,
where the summations are taken over 
$I$ and $J$ in increasing order, with antisymmetric coefficients $v_I$ and $u_J$, 
 the $L^2$ inner product is defined as 
\begin{align*}
  \(u, v\) 
  = \sideset{}{'}{\sum} _{J}\(u _{J} d \bar{z} _{J}, v _{J} d \bar{z} _{J}\) 
  = \sideset{}{'}{\sum} _{J} \int _{\Om} u _{J} \bar{v} _{J}dV.
\end{align*}
The corresponding $L ^{2}$-norm is given by  
 $ \lnm u \rnm ^{2} =  \(u, u\).$
For any $u \in W _{\(0,q\)}^{k} \(\Om\)$, the $L^2$-Sobolev norm is defined by 
$$\lnm u \rnm _{k} ^{2} = \sideset{}{'}{\sum}_{J}\sum _{|\beta| = 0} ^{k} \int _{\Om} \left|\frac{\partial^{|\beta|} u _{J}}{\partial x_1^{\beta_1} \cdots \partial x_{2n}^{\beta_{2n}}}  \right|^2 dV,$$
with $\beta=(\beta_1, \cdots, \beta_{2n})$.  The $\bar{\partial}$-operator is given by
\begin{align*}
  \bar{\partial}u = \sideset{}{'}\sum_{|J| = q}  \sum_{j = 1}^{n} \frac{\partial u _{J}}{\partial \bar{z}_{j}} d\bar{z}_{j} \wedge d\bar{z}_{J},
\end{align*}
and the domain of $\bar{\partial}$ is
\begin{align*}
  \dom \(\bar{\partial}\) = \lbk u \in L _{\(0, q\)} ^{2} \(\Om\): \bar{\partial} u \in L _{\(0, q + 1\)} ^{2} \(\Om\)\rbk.
\end{align*}
Let $\vartheta$ be the formal adjoint of $\bar{\partial}$ 
and the formal adjoint $\vartheta$ can be calculated as 
\begin{align*}
  \vartheta u = - \sideset{}{'}\sum _{|K| = q-1}\(\sum _{j = 1} ^{n} \frac{\partial u _{jK}}{\partial z_{j}}\) d\bar{z} _{K} .
\end{align*}
The Hilbert space adjoint of $\bar{\partial}$ is denoted by $\bar{\partial} ^{*}$, and its domain is defined by
\begin{align*}
  \dom \(\bar{\partial} ^{*}\) = \lbk v \in L _{\(0, q + 1\)} ^{2} \(\Om\) :  \left| \(\bar{\partial} u, v\) \right| \lesssim  \lnm u \rnm {\rm ~for~ all~} u \in \dom \(\bar{\partial} \)\rbk.
\end{align*}
For $1 \leq q \leq n$, the Kohn Laplacian $\square _{q}$ is defined by $\square _{q} = \bar{\partial} \bar{\partial} ^{*} + \bar{\partial} ^{*} \bar{\partial} : L _{\(0, q\)} ^{2} \(\Om\) \to L _{\(0, q\)} ^{2} \(\Om\)$ with 
\begin{align*}
  \dom\(\square _{q}\) = \lbk u \in \dom \(\bar{\partial}\) \cap \dom \(\bar{\partial} ^{*}\) \cap L _{\(0, q\)} ^{2} \(\Om\):  \bar{\partial} u \in\dom \(\bar{\partial} ^{*}\), \bar{\partial} ^{*} u \in \dom \(\bar{\partial}\)\rbk.
\end{align*}
It is the standard fact in the $\bar\partial$-Neumann theory that $\square _{q}$ is a self-adjoint and surjective, and it has a self-adjoint bounded inverse, called the $\bar{\partial}$-Neumann operator $N _{q}$. 

Recall the special boundary charts.
For each $0\leq \alpha \leq N$, let $U_\alpha$ be an open set in $\mathbb{C}^n$ such that $\lbk U_{\alpha} \rbk_{\alpha = 0} ^{N}$ is an open covering of $\overline{\Om}$ with $U_{0} \subset \subset  \Om$. 
 Define $L _{n} = \sum _{k = 1} ^{n} \frac{1}{|\partial \rho|^{2}} \frac{\partial \rho}{\partial\bar{z} _{k}} \frac{\partial}{\partial z _{k}}$ to be the complex normal vector of $\(1, 0\)$-type  with unit Euclidean norm.
For any $U_{\alpha}$ with $1 \leq \alpha \leq N$, define vector fields $L _{1}, \cdots, L _{n - 1}$ of $\(1, 0\)$-type  in $U _{\alpha}$ which are orthonormal with respect to the Euclidean norm and span the holomorphic tangent space $T^{(1, 0)} _{z}  \partial \Om $ for any $z \in \partial\Omega \cap U_\alpha$. 
 It follows that $\{L _{1}, \cdots, L _{n - 1}, L _{n}\}$ forms an orthogonal basis of $T^{(1, 0)} _{z} \mathbb{C}^n$ for any $z\in U _{\alpha}$. Let $\omega _{1}, \omega _{2}, \cdots, \omega _{n}$ be the $\(0, 1\)$-forms dual to $L _{1}, L _{2}, \cdots, L _{n - 1}, L _{n}$ satisfying $\omega _{j} \(L _{k}\) = \delta _{jk}$. Then $\lbk \omega  _{j}\rbk _{j = 1} ^{n}$ forms an orthogonal basis of $\Lambda^{\(1, 0\)}_{z} \mathbb{C}^n$ for any $z\in U _{\alpha}$. 
These moving frames are referred to the special boundary chart as in \cite{FK1972}.
For a $(0, q)$-form $u$, write $u= \sum' _{J} u _{J}^{\dagger} \bar{\omega} _{J} $ with respect to the special boundary chart.
Let $\phi$ be a smooth function on $\overline{\Om}$. Using special boundary charts, write 
$$\partial\bar\partial \phi =\sum _{i,j = 1} ^{n} \phi^{\dagger} _{ij} \omega_i \wedge \bar\omega_j.$$ 
For an $(0, q)$-form $u$, denote the tangential part of $u$ by
$u_{\text {Tan }}= \sum' _{n\notin J} u_{J}^{\dagger} \bar{\omega} _{J}$,  the normal part of  $u$ by  $u_{\text{Norm}} = \sum' _{n \in J} u _{J}^{\dagger} \bar{\omega} _{J} = \sum' _{|K|= q-1} u _{Kn}^{\dagger} \bar{\omega}_{K} \wedge \bar{\omega}_{n}$ and the normal component of $u$ by $u_{\text {norm }} = \sum' _{|K|= q-1}  u _{Kn}^{\dagger} \bar{\omega}_{K}$. It follows that $u_{\text{Norm}} = u_{\text {norm }} \wedge \bar{\omega}_{n}$ and $u_{\text {Tan }} \wedge \bar{\omega}_n = u \wedge \bar{\omega}_n$. It is easy to verify that $u \in \dom \(\bar{\partial} ^{*}\)$ if and only if $u^{\dagger} _{J} = 0$ on $\partial \Om$ when $n \in J$, which is also equivalent to $u_{\text{Norm}}=0$ on $\partial \Om$.

Let $u = \sum' _{J} u _{J}^{\dagger} \bar{\omega} _{J} =  \sum' _{K} u _{K} d\bar{z} _{K}$ be a smooth $(0, q)$-form with compact support in some $V_\alpha$ satisfying $\bar{\omega} _{J} = \sum'_K \epsilon _{J} ^{K} d\bar{z} _{K}$ for smooth functions $ \epsilon _{J} ^{K}$. For a vector field $X$, define $\nabla^{(b)}_{X} u = \sum' _{J}\(Xu_{J}^{\dagger}\) \bar{\omega} _{J}$  and $ \nabla_{X}u =X u = \sum' _{K}\(X u _{K}\)d\bar{z} _{K}$. 
By the straightforward calculation, 
  \begin{align*}
    \nabla^{(b)}_{X} u 
    = \sideset{}{'}\sum_{J} \(X u_{K}\)d\bar{z} _{K} - \sideset{}{'}\sum_{J, K}\(X\epsilon _{J} ^{K}\) u_{J}^{\dagger}d\bar{z} _{K} =  \nabla_{X}u  - \sideset{}{'}\sum_{J, K}\(X\epsilon _{J} ^{K}\) u_{J}^{\dagger}d\bar{z} _{K}.
  \end{align*}
 In particular, we have



 

  \begin{coro}\label{s.b.c comparable}
  If $u \in  \dom\(\bar{\partial} ^{*}\) \cap C^\infty_{(0, q)}(\overline{\Omega})$ and $T$ is a tangential vector field, then $\nabla^{(b)} _{T} u \in \dom\(\bar{\partial} ^{*}\)$ and $\nabla^{(b)} _{T} u = \nabla _{T} u +0$-th order term of $u$. 
  \end{coro}

We now briefly recall the method of elliptic regularization due to Kohn-Nirenberg \cite{KN65} as follows. The interested reader may refer to \cite{KN65, T23, S08} for detailed discussions.  Let $u \in W _{\(0, q\)} ^{1} \(\Om\) \cap \dom \(\bar{\partial} ^{*}\)$.
For $\delta > 0$, consider the   quadratic form $Q _{\delta}$ defined by 
   \begin{align*}
     Q _{\delta} \(u, u\) = \lnm \bar{\partial} u \rnm ^{2} + \lnm \bar{\partial} ^{*} u \rnm ^{2} + \delta\lnm \nabla u \rnm ^{2},
   \end{align*} 
where $\nabla u$ denotes all first order derivatives of all coefficients of $u$. By the classical theory in functional analysis, there exists a unique self-adjoint operator $\square_{\delta, q}$  with 
\begin{align*}
  \left(\square_{\delta, q} u, v\right)=Q_\delta(u, v), \quad u \in \operatorname{dom}\left(\square_{\delta, q}\right), v \in W_{(0, q)}^1(\Omega) \cap \operatorname{dom}\left(\bar{\partial}^*\right) .
\end{align*}
It is actually an elliptic problem to invert $\square_{\delta, q}$ and the inverse operator $N_{\delta, q}$ gains two derivatives in Sobolev regularity.  As a consequence,  if $u \in C_{(0, q)}^{\infty}(\cl{\Omega})$, then $N_{\delta, q} u \in C_{(0, q)}^{\infty}(\cl{\Omega})$. It follows from the straightforward calculation that 
\begin{align*}
  \square_{\delta, q} u=-\left(\frac{1}{4}+\delta\right) \Delta u, \quad u \in \operatorname{dom}\left(\square_{\delta, q}\right) .
\end{align*}
Additionally, $u \in C_{(0, q)}^2(\cl{\Omega})$ belongs to $\operatorname{dom}\left(\square_{\delta, q}\right)$ if and only if $u \in \operatorname{dom}\left(\bar{\partial}^*\right)$ and
\begin{align*}
  (\bar{\partial} u)_{\text {norm }}+\delta \left( \frac{\partial u}{ \partial \nu} \right)_{\operatorname{Tan}}=0 \quad \text { on } \partial \Omega,
\end{align*}
where $\frac{\partial u}{ \partial \nu}$ denotes the outward normal componentwise differentiation on $u$  in the Euclidean coordinates. 
It thus follows that 
$\bar{\partial} N _{\delta, q} u + \delta \frac{\partial N _{\delta, q} u}{ \partial \nu} \wedge \bar w_n \in \rm{dom}(\bar{\partial} ^{*})$ for any $u \in C^\infty_{(0, q)}(\overline{\Omega})$.

\subsection{Unified $L^2$ estimates}

We first recall the following unified estimates obtained by Yue Zhang (see Corollary 3.3  in \cite{YZ2021}). 

\begin{thm}\label{basic estimate thm}
  Let $\Om$ be a smooth bounded domain in $\bbC ^{n}$ with a defining function $\rho$  and $U$ be a small open neighborhood of some boundary point. Suppose $g, \phi \in C ^{2}\(\cl{\Om}\)$ and $ g > 0$. 
  For $\gamma >0$ and $0 < \eta < \frac{1}{2}$, and for every ordered index set $I _s = \{j _k: 1 \leq k \leq s\} \subset \{1,2, \cdots, n-1\}$ and $J_s = \{1,2, \cdots, n\} \setminus I_s$ with $1\leq s\leq n-1$, there is a constant $C _{\eta, \gamma} > 0$ independent of $u$, $\phi$ and $g$, such that:
  \begin{align*}
    &\lnm \sqrt{g} \bar{\partial} u \rnm _{\phi}^2 + \(1 + \frac{1}{\gamma}\) \lnm \sqrt{g} \bar{\partial} _{\phi} ^{*} u \rnm _{\phi} ^{2} + C _{\eta, \gamma} \lnm \sqrt{g} u \rnm _{\phi} ^{2}\\\nonumber
    \geq & - \(\gamma + \frac{1}{\eta}\) \sideset{}{'}{\sum }_{\lmdl J \rmdl = q} \sum _{j \leq n} \lnm \frac{1}{\sqrt{g}} \(L _{j} g\) u_J^{\dagger} \rnm _{\phi} ^2 + \eta \sideset{}{'}{\sum}_{\lmdl J \rmdl = q} \( \sum_{j \in J_s} \lnm \sqrt{g} \cl{L}_j u _J^{\dagger} \rnm _{\phi}^2 + \sum_{j \in I_s} \lnm \sqrt{g} \delta_{w_j} u_J^{\dagger} \rnm _\phi^2\)\\\nonumber
    & + \sideset{}{'}{\sum} _{\lmdl J \rmdl = q} \sum_{j \in I_s} \(\(\(g^{\dagger}  _{jj} - g \phi ^{\dagger} _{jj}\) u_J^{\dagger}, u_J^{\dagger} \)_\phi - \int_{\partial \Om} g \rho^{\dagger} _{jj} u_J^{\dagger} \cl{u} _J^{\dagger} e^{-\phi} \,dS\)\\\nonumber
    & + \sideset{}{'}{\sum}_{\lmdl K \rmdl = q - 1}\sum_{i,j = 1} ^{n} \(\(\(-g^{\dagger} _{ij} + g \phi^{\dagger} _{ij}\) u _{iK}^{\dagger}, u _{jK}^{\dagger} \) _\phi + \int _{\partial \Om} g \rho^{\dagger} _{ij} u_{iK}^{\dagger} \cl{u }_{jK}^{\dagger} e^{-\phi} \,dS\)\nonumber
  \end{align*}
  holds  for all $ u = \sideset{}{'}{\sum} _J u _J^{\dagger} \bar{w}_J \in C _{\(0, q\)}^ \infty \(\cl{\Om} \) \cap \dom\(\bar{\partial}^*\)$
  with support in $\cl{\Om} \cap U$.
\end{thm}

For the boundary term, we can use Lemma 3.5 in \cite{YZ2021}.

\begin{lem}\label{boundary term}
  Let $\Om$ be a bounded pseudoconvex domain in $\bbC ^n$ with $ n > 2$ and $\rho$ be the smooth defining function and let $U$ be a small open neighborhood of some boundary point. For a fixed $s$ with $1 \leq s \leq n-2$, there exist $ q _0 $ with $s+1 \leq q _0 \leq n-1$ and an ordered index set $I _s = \{j_k: 1 \leq k \leq s \} \subset \{1,2,\cdots, n-1 \}$ such that   \begin{align}\label{boundary estimate}
    \sideset{}{'}{\sum} _{\lmdl K \rmdl = q - 1} \sum_{i,j = 1}^{n} \int_{\partial \Om} g \rho^{\dagger} _{ij} u _{iK}^{\dagger} \cl{u}_{jK}^{\dagger} e ^{- \phi} \, dS - \sideset{}{'}{\sum}_{\lmdl J \rmdl = q} \sum_{j \in I_s} \int _{\partial \Om} g \rho^{\dagger} _{jj} \lmdl u_J^{\dagger} \rmdl ^2  e ^{-\phi} \,dS \geq 0
  \end{align}
 holds for any $u \in C _{\(0, q\)} ^\infty \(\cl{\Om}\) \cap \dom \(\bar{\partial}^*\)$ supported in $\cl{\Om} \cap U$ with $n-1\geq q \geq q_0$.  
  In particular, if $s = n - 2$, then $q_0 = n-1$ and $I_s = \{1,2,\cdots, n-1 \} \setminus \{ t \}$, for any $1 \leq t \leq n-1$.
\end{lem}

\begin{proof}
Since  $u_{\rm{Norm}} = 0$ on $\partial\Om$, then 
 $$  \sideset{}{'}{\sum} _{\lmdl K \rmdl = q - 1} \sum_{i,j = 1}^{n}  \rho^{\dagger} _{ij} u _{iK}^{\dagger} \cl{u}_{jK}^{\dagger} \geq M \sideset{}{'}{\sum}_{\lmdl J \rmdl =q} \lmdl u_J^{\dagger} \rmdl ^2 $$ holds on $\partial\Om$  if and only if the sum of the smallest $q$ eigenvalues of the Hessian matrix $ \left( \rho^{\dagger} _{ij}\right)$  restricted to $T^{(1, 0)}\partial \Om$ is at least $M$ (cf. Lemma 4.7 in \cite{S10}). Because $\partial\bar\partial \rho$ is non-negative restricted to $T^{(1, 0)}\partial \Om$, for any $s$ with $1 \leq s \leq n-2$, $q_0 = n-1$ will enable (\ref{boundary estimate}) to hold.  
\end{proof}

By choosing appropriate $g, \phi$ in \autoref{basic estimate thm} and applying (\ref{boundary estimate}), we have the following unweighted $L^2$ estimates. 

\begin{prop}\label{modify basic estimate}
  Let $\Om$ be a smooth bounded pseudoconvex domain in $\bbC ^n$ with $n > 2$ and $U$ be a small open neighborhood of some boundary point. 
  For a fixed $s$ with $1 \leq s \leq n-2$, let $q_0$ and the ordered index set $I_s$ be defined as in \autoref{boundary term}, such that the inequality (\ref{boundary estimate}) holds. Suppose $\lambda \in C^2 (\cl{\Omega})$. Then for any $\gamma >0$ and $0< \eta <\frac{1}{2}$, there exists $C\(\gamma,\eta,n\)>0$ such that:
  \begin{align*}
    \lnm \bar{\partial} u \rnm^2 + \lnm \bar{\partial}^* u  \rnm^2 
    \gtrsim 
    - C\(\gamma,\eta,n\) \sideset{}{'}{\sum} _{\lmdl J \rmdl = q}\sum_{j\leq n} \lnm \(L _j \lambda\) u_J^{\dagger} \rnm^2 
    + \sideset{}{'}{\sum} _{\lmdl J \rmdl = q} \sum_{j \in I_s} \(  - 2 \lambda^{\dagger} _{jj}  u_J^{\dagger},  u_J^{\dagger} \) + \sideset{}{'}{\sum}_{\lmdl K \rmdl = q - 1} \sum_{i,j = 1} ^{n} \(  2 \lambda^{\dagger} _{ij}  u_{iK}^{\dagger}, u_{jK}^{\dagger} \)
  \end{align*}
  holds for all $u = \sum'_J u_J^{\dagger} \bar{w}_J \in C_{\(0, q\)}^\infty \(\cl{\Om}\) \cap \dom (\bar{\partial}^*)$ with support in $\cl{\Om} \cap U$.
\end{prop}

\begin{proof}
  By (\ref{boundary estimate}), the boundary term in \autoref{basic estimate thm} is non-negative. 
  Given  $\lambda \in C^2(\overline{\Om})$, let $g = e ^{-\lambda}$ and $\phi = \lambda$. Then for any $f \in C_{\(0, q\)} ^\infty \(\cl{\Om}\) \cap \dom (\bar{\partial}^*)$, it follows from \autoref{basic estimate thm} that
%
%
  \begin{align*}
    &\lnm e^{- \frac{\lambda}{2}} \bar{\partial} f \rnm_\lambda^2 + \lnm e^{- \frac{\lambda}{2}} \bar{\partial}_\lambda^* f \rnm_\lambda^2 + \lnm e^{- \frac{\lambda}{2}} f \rnm_\lambda^2 \\
    \gtrsim & - C\(\gamma, \eta\) \sideset{}{'}{\sum}_{\lmdl J \rmdl = q} \sum_{j\leq n} \lnm e^{\frac{\lambda}{2}} e^{- \lambda} (L_j \lambda) f_J^{\dagger} \rnm_\lambda^2 \\
    & + \sideset{}{'}{\sum} _{\lmdl J \rmdl = q} \sum_{j \in I_s} \(\( e^{-\lambda} \lmdl L_j \lambda \rmdl^2 - e^{- \lambda} \lambda^{\dagger} _{jj} - e^{- \lambda} \lambda^{\dagger} _{jj} \) f_J^{\dagger}, f_J^{\dagger}\)_\lambda\\
    & + \sideset{}{'}{\sum }_{\lmdl K \rmdl = q - 1} \sum_{i,j = 1} ^{n} \(\( -e^{-\lambda}  (L_i \lambda )(\bar{L} _j \lambda)  + e^{- \lambda} \lambda^{\dagger} _{ij} + e^{- \lambda} \lambda^{\dagger} _{ij} \) f_{iK}^{\dagger},f_{jK}^{\dagger} \)_\lambda.
  \end{align*}
  Thus, we have
%
%
%
  \begin{align*}
   &\lnm e ^{- \lambda} \bar{\partial} f \rnm ^2 + \lnm e ^{- \lambda} \bar{\partial}_\lambda^* f \rnm ^2 + \lnm e ^{- \lambda} f \rnm ^2 \\
    \gtrsim & - C\(\gamma, \eta\) \sideset{}{'}{\sum} _{\lmdl J \rmdl = q} \sum_{j\leq n} \lnm e^{-\lambda} \(L_j \lambda\) f_J^{\dagger} \rnm ^2 
    + \sideset{}{'}{\sum} _{\lmdl J \rmdl = q} \sum_{j \in I_s} \(e^{-2\lambda} \(\lmdl L_j \lambda \rmdl^2 - 2 \lambda^{\dagger} _{jj}\) f_J^{\dagger}, f_J^{\dagger}\)\\
    & + \sideset{}{'}{\sum} _{\lmdl K \rmdl = q - 1} \sum_{i,j =1}^{n} \( e^{-2\lambda}\(- (L_i \lambda)( \bar{L} _j \lambda )+ 2\lambda^{\dagger} _{ij}\) f_{iK}^{\dagger},f_{jK}^{\dagger} \).
  \end{align*}
   Moreover, the above inequality holds for any $f \in L^2_{\(0, q\)}(\Om) \cap \dom (\bar{\partial}^*) \cap \dom (\bar{\partial})$ by the density lemma (cf. Proposition 2.3 in \cite{S10}).
   
  For any $u \in L ^{2} \(\cl{\Om}\) \cap \dom (\bar{\partial}^*)$, let $f = e^\lambda u$. 
   Note that $\bar{\partial}_\lambda^* f = e^\lambda \bar{\partial}^* (e^{- \lambda} f)$, so $\bar{\partial}_\lambda^* ( e^\lambda u ) = e^\lambda \bar{\partial}^* u$. Then it follows 
  \begin{align}\label{12222}
    &\lnm e ^{- \lambda} \bar{\partial} \(e^\lambda u\) \rnm ^2 + \lnm \bar{\partial}^{*} u \rnm ^2 + \lnm u \rnm ^2 \\\nonumber
    \gtrsim & - C\(\gamma, \eta\) \sideset{}{'}{\sum}_{\lmdl J \rmdl = q} \sum_{j\leq n} \lnm \(L_j \lambda\) u_J^{\dagger} \rnm ^2
    + \sideset{}{'}{\sum}_{\lmdl J \rmdl = q} \sum_{j \in I_s} \(\(\lmdl L_j \lambda \rmdl^2 - 2 \lambda^{\dagger} _{jj}\) u_J^{\dagger}, u_J^{\dagger}\)\\\nonumber
    & + \sideset{}{'}{\sum} _{\lmdl K \rmdl = q - 1} \sum_{i,j = 1} ^{n} \(\(- (L_i \lambda)( \bar{L} _j \lambda) + 2\lambda^{\dagger} _{ij}\) u_{iK}^{\dagger},u_{jK}^{\dagger} \).\nonumber
  \end{align}
%
%
Also, since
  \begin{align}\label{2}
    \lnm e^{-\lambda} \bar{\partial} ( e^\lambda u ) \rnm^2 
    &= \lnm e^{-\lambda} \(e^\lambda \bar{\partial} u + e^\lambda \bar{\partial} \lambda \wedge u\) \rnm ^2 
     = \lnm \bar{\partial} u + \bar{\partial} \lambda \wedge u \rnm^2 \\
    &\leq 2 \lnm \bar{\partial} u \rnm^2 + 2 \lnm \bar{\partial} \lambda \wedge u \rnm^2 \nonumber \\
    &= 2 \lnm \bar{\partial} u \rnm^2 + 2 \lnm \sideset{}{'}{\sum}_{|J|=q} \sum_{j \leq n} (\bar{L}_j \lambda) u_J^{\dagger} \bar{w}_j \wedge \bar{w}_J \rnm^2 \nonumber \\
   & \lesssim C\(n\) \left(  \lnm \bar{\partial} u \rnm^2 + \sideset{}{'}{\sum} _{|J|=q} \sum_{j \leq n} \lnm \(L_j \lambda\) u_J ^{\dagger}\rnm^2 \right),\nonumber
  \end{align}
where  $C\(n\)$ is a positive constant depending on the special boundary chart,
(\ref{12222}) yields
  \begin{align*}
  \lnm \bar{\partial} u \rnm^2 + \lnm \bar{\partial}^{*} u \rnm ^2
     \gtrsim   &\lnm \bar{\partial} u \rnm^2 + \lnm \bar{\partial}^{*} u \rnm ^2 + \lnm u \rnm ^2 \\
    \gtrsim & - C\(\gamma, \eta, n\) \sideset{}{'}{\sum} _{\lmdl J \rmdl = q} \sum_{j\leq n} \lnm \(L_j \lambda\) u_J^{\dagger} \rnm ^2 
     + \sideset{}{'}{\sum} _{\lmdl J \rmdl = q} \sum_{j \in I_s} \(\(\lmdl L_j \lambda \rmdl^2 - 2 \lambda^{\dagger} _{jj}\) u_J^{\dagger}, u_J^{\dagger}\)\\
    & + \sideset{}{'}{\sum} _{\lmdl K \rmdl = q - 1} \sum_{i,j = 1} ^{n} \(\(- (L_i \lambda)( \bar L_j \lambda) + 2\lambda^{\dagger} _{ij}\) u_{iK}^{\dagger},u_{jK}^{\dagger} \).
  \end{align*}
Moreover, 
  \begin{align*}
    & \lmdl \sideset{}{'}{\sum}_{\lmdl K \rmdl = q - 1} \sum_{i,j = 1} ^{n} \(\(L_i \lambda\)\(\bar{L} _j \lambda\) u_{iK}^{\dagger}, u_{jK}^{\dagger}\) \rmdl 
    \lesssim  \sideset{}{'}{\sum}_{\lmdl K \rmdl = q - 1} \sum_{i,j = 1} ^{n} \( (L_i \lambda) u_{iK}^{\dagger} \bar{w}_K, (L_j \lambda) u_{jK}^{\dagger} \bar{w}_K \)\\
    \leq & \lnm \sideset{}{'}{\sum} _{\lmdl K \rmdl = q - 1} \sum_{i = 1} ^{n} (L_i \lambda) u_{iK} ^{\dagger}\bar{w}_K \rnm \lnm \sideset{}{'}{\sum} _{\lmdl K \rmdl = q - 1} \sum_{j = 1} ^{n} (L_j \lambda) u_{jK}^{\dagger} \bar{w}_K \rnm    \leq  \sideset{}{'}{\sum} _{\lmdl J \rmdl = q}\sum_{i \leq n} \lnm (L_i \lambda) u_{J}^{\dagger} \rnm^2
  \end{align*}
finishs the proof. 
%
\end{proof}

\subsection{Standard Sobolev estimates}

By the standard boundary regularity of the elliptic operator, we have the following Sobolev estimate. 

\begin{lem}
 Let $\Omega$ be a bounded smooth domain in $\mathbb{C}^n$. If $u \in \dom(\bar{\partial}^*) \cap W_{(0,q)}^k \(\Om\)$ for $k \geq 1$, then 
  \begin{align*}
    \lnm u_{\rm{Norm}} \rnm_k \lesssim \lnm (\bar\partial \bar\partial^* + \vartheta \bar\partial  ) u \rnm_{k-2} + \lnm u \rnm_{k-1}.
  \end{align*}
\end{lem}

\begin{proof}
  Since $u \in \dom(\bar{\partial}^*)$, $u _{\rm{Norm}} |_{b\Om} = 0$. Write $u _{\rm{Norm}} = \sum' _{J}  u^{\dagger}_J \bar{w}_J = \sum' _{K}  u_K d \bar{z}_K $. Then by the standard elliptic estimates (cf. \cite{E2022, T23}), we have 
  \begin{align*}
    \lnm u_{\rm{Norm}}  \rnm_k 
     \lesssim \sum'_J \lnm \Delta u^{\dagger}_J \rnm_{k-2} +\sum'_J \lnm u^{\dagger}_J \rnm _{k - 1}
     \lesssim \sum'_K \lnm \Delta u_K  \rnm_{k-2} + \sum'_K \lnm u_K \rnm_{k-1} 
     \lesssim  \lnm (\bar\partial \bar\partial^* + \vartheta \bar\partial  ) u \rnm_{k-2} + \lnm u \rnm_{k-1}.
  \end{align*}
\end{proof} 

\begin{coro}\label{uN estimate}
 Let $\Omega$ be a bounded smooth domain in $\mathbb{C}^n$.   If $u \in \dom(\bar{\partial}^*) \cap W_{(0,q)}^k \(\Om\)$  for $k \geq 1$, then 
  \begin{align*}
    \lnm u_{\rm{Norm}} \rnm_k \lesssim \lnm \bar{\partial} u \rnm_{k-1} + \lnm \bar{\partial} ^{*} u \rnm_{k-1} + \lnm u \rnm_{k-1}.
  \end{align*}
\end{coro}

The next result is the benign estimates due to Boas and Straube \cite{BS91} and the well known interior elliptic regularity for $\bar\partial \oplus \vartheta$.
\begin{prop}\label{benign estimate}
  Let $\Om$ be a smooth bounded pseudoconvex domain in $\bbC^n$ with defining function $\rho$, $k \in \mathbb{N}$, $0 \leq q \leq n$ and $Y$ be a tangential vector field of $(1,0)$-type  with coefficients in $C^\infty \(\cl{\Om}\)$. 
  For any $u \in C_{(0,q)}^\infty \(\cl{\Om}\)\cap \dom \(\bar{\partial}^*\)$, we have 
  \begin{align*}
    \sideset{}{'}{\sum}_{J}\sum_{j} \lnm \frac{\partial u_J}{\partial \bar{z}_j} \rnm _{k - 1}^2 
    \lesssim  \lnm \bar{\partial} u \rnm _{k-1}^2 + \lnm \bar{\partial}^* u \rnm_{k-1}^2 + \lnm u \rnm _{k-1}^2 ;
  \end{align*}

  \begin{align*}
    \lnm Yu  \rnm _{k-1}^2 
    \lesssim \lnm \bar{\partial} u \rnm _{k-1}^2 + \lnm \bar{\partial}^* u \rnm_{k-1}^2 + \lnm u \rnm_{k-1} \lnm u \rnm_{k}.
  \end{align*}
Moreover, if $u$ is compactly supported in $\Om$, then 
  \begin{align}\label{interior estimate}
    \lnm u \rnm _{k} ^{2} \lesssim \lnm \bar{\partial} u \rnm _{k-1}^2 + \lnm \bar{\partial}^* u \rnm_{k-1}^2 .
  \end{align}
\end{prop}

We also need the following results on $N_{\delta, q}$ due to Straube (cf. \cite{S08}).
\begin{prop}[\cite{S08}]\label{B regular}
 Let $\Omega$ be a bounded smooth domain in $\mathbb{C}^n$. For any $k \in \mathbb{N}$, there exists a positive constant $C_{k}$ independent of $\delta$, such that the following estimates hold for all $u \in C^\infty_{(0, q)}(\overline{\Omega})$.
\begin{itemize}
\item  
 If the Bergman projection $B_q: W^k_{(0, q)}(\overline\Omega) \to W^k_{(0, q)}(\overline\Omega)$ is bounded, 
 then $$  \left\|N_{\delta, q} u\right\|_k \leqslant C_k \left(\left\|\bar{\partial} N_{\delta, q} u\right\|_k+\left\|\bar{\partial}^* N_{\delta, q} u\right\|_k\right);$$
 \item   
 $   \left\|\bar{\partial} \bar{\partial}^* N_{\delta, q} u\right\|_k^2+\left\|\vartheta \bar{\partial} N_{\delta, q} u\right\|_k^2 
    \leq C _{k}\left(\left\|\bar{\partial} N_{\delta, q} u\right\|_k^2+\|u\|_k^2+\|u\|_1^2+\delta\left\|\nabla N_{\delta, q} u\right\|_k^2+\delta^2\|u\|_{k+1}^2\right).$
  \end{itemize}
\end{prop}

\section{Transverse vector fields}\label{vf}
\subsection{Definition of property ($\widetilde P^{\#}$)}
 Let $\Om \subset \bbC ^{n}$ be a bounded smooth domain with a smooth defining function $\rho$. Given a smooth, transverse, $\(1, 0\)$-vector field $v$ near $\partial\Om$ with $d\rho\(v\) \neq 0$, the $\(0,1\)$-form 
 \begin{align*}
  \theta ^{v} = \frac{d\rho \(\left[\bar{\partial}, v\right]\)}{d\rho\(v\)} 
 \end{align*}
are introduced in \cite{BS93, H11}. 
Writing $v = \sum _{j = 1} ^{n} v_{j} \frac{\partial}{\partial z _{j}}$, then
\begin{align*}
  d\rho\(v\) = v \rho = \sum _{j = 1} ^{n} v_{j} \frac{\partial \rho}{\partial z_{j}}, ~~\text{         }~~~~~
  \theta ^{v} = \sum_{j=1}^{n} \theta _{j} ^{v} d \bar{z}_{j} = \(v\rho\) ^{-1} \sum _{j,k = 1} ^{n} \frac{\partial \rho}{\partial z _{k}} \frac{\partial v_{k}}{\partial \bar{z} _{j}} d \bar{z}_{j}.
\end{align*} 
For an ordered index set $I _s = \{j _k: 1 \leq k \leq s\} \subset \{1,2, \cdots, n-1\}$, 
define the Hermitian matrix $\left(\mathcal{X}^{q}_{ij} \(\phi\) \right)_{1 \leq i, j \leq n}$ associated to $\phi$ acting on $(0, q)$-form $u=\sideset{}{'}{\sum} _{|J| = q } u _{J}^{\dagger} \bar{w} _{J}$ by:
$$\mathcal{X}^{q}_{ij} \(\phi\) \big(u\big)=\sideset{}{'}{\sum} _{|K| = q - 1}\sum _{i,j = 1} ^{n} \phi ^{\dagger} _{ij} u _{iK}^{\dagger} \bar{w} _{j} \wedge \bar{w} _{K} -\left( \sum _{j \in I _{s}} \phi^{\dagger}  _{jj} \right) u $$
and the corresponding Hermitian bilinear form by  
$$ \mathcal{X}^{q} \(\phi\) \big(u, u \big)= \left( \mathcal{X}^{q}_{ij} \(\phi\) \big(u\big), u \right)= \sideset{}{'}{\sum} _{|K| = q - 1}\sum _{i,j = 1} ^{n} \phi ^{\dagger} _{ij} u _{iK}^{\dagger} \bar{u} _{jK}^{\dagger} - \sum _{j \in I _{s}} \phi ^{\dagger} _{jj} \lmdl u \rmdl ^{2}. $$
Denote the eigenvalues of the matrix $\left( \phi^{\dagger}  _{ij} \right)_{1\leq i, j \leq n}$ by $\mu _{1} \leq \mu _{2} \leq \cdots \leq \mu _{n}$ and $$ \mathcal{Y}^{q} \(\phi\) =\sum _{i = 1} ^{q} \mu _{i} - \sum _{j \in I _{s}} \phi ^{\dagger} _{jj}. $$ 
  It follows from the standard argument (cf. Lemma 4.7 in \cite{S10}) that for any constant $A$ 
\begin{align*}
  \mathcal{X}^{q} \(\phi\) \big(u, u \big) 
  \geq  A  \lmdl u \rmdl ^{2} ~~~{\rm for ~any~}(0, q)-{\rm form~} u
\end{align*}
if and only if $$ \mathcal{Y}^{q} \(\phi\)  \geq A   .$$

Inspired by property ($\widetilde{P}^{\#}$) 
defined by Yue Zhang in \cite{YZ2021}, we introduce the following definitions.

\begin{deft}\label{property Pq}
  Let $\Om$ be a bounded pseudoconvex domain in $\mathbb{C}^n$ with $n \geq 3$ and $\rho$ be the smooth defining function.  We say that $\Om$ possesses a family of transverse vector fields satisfying property strong ($\widetilde{P} _{q} ^{\#}$) for $1< q \leq n-1$ if  there exist open sets  $\lbk V_{\alpha}\rbk _{\alpha = 1}^{N}$ in $\mathbb{C}^n$ with special boundary charts  satisfying $\partial\Omega \subset \bigcup_\alpha V_\alpha$, and,  for every $\varepsilon > 0$, 
  there exist an index set $I _s \subset \{1,2, \cdots, n-1\}$ for each $V _\alpha$ with $1 \leq s \leq n-2$,
  a family of   finite many smooth  vector fields $v _{\alpha, \varepsilon}$ of $(1,0)$-type with compact support  in $V_{\alpha}$ satisfying $\partial \Omega \subset \bigcup_\alpha {\rm supp}(v _{\alpha, \varepsilon}) $, $d \rho \(v _{\alpha, \varepsilon}\)\not=0$ on supp$(v _{\alpha, \varepsilon})\cap \partial\Om$ and $C ^{2}$-smooth functions $\lambda_{\alpha, \varepsilon}$ near $\partial \Om\cap V_{\alpha}$ such that \\
  (i) $\lmdl \text{arg}  d \rho \(v_{\alpha, \varepsilon}\) \rmdl _{\text{C} ^{1}} < \varepsilon$   on $\partial \Om \cap V_{\alpha}$;\\ 
  (ii) $ \sum_{j \leq n} \lmdl L _{j} \lambda _{\alpha, \varepsilon} \rmdl ^{2} < \mathcal{Y}^{q} \(\lambda_{\alpha, \varepsilon}\)$ on $\partial \Om \cap V_{\alpha}$;\\
  (iii) $\left|  \theta ^{v_{\alpha, \varepsilon}} \right |^2 \cdot {\rm I}_{n \times n} 
  < \varepsilon^2 \left( \mathcal{X}^{q}_{ij} \(\lambda_{\alpha, \varepsilon}\) \right)_{1 \leq i, j \leq n}$ on $\partial \Om \cap {\rm supp}(v _{\alpha, \varepsilon})$; \\
  and   \begin{align}\label{boundary estimate''}
    \sideset{}{'}{\sum} _{\lmdl K \rmdl = q - 1} \sum_{i,j = 1}^{n} \int_{\partial \Om } g \rho^{\dagger} _{ij} u _{iK}^{\dagger} \cl{u}_{jK}^{\dagger} e ^{- \phi} \, dS - \sideset{}{'}{\sum}_{\lmdl J \rmdl = q} \sum_{j \in I_s} \int _{\partial \Om} g \rho^{\dagger} _{jj} \lmdl u_J^{\dagger} \rmdl ^2  e ^{-\phi} \,dS \geq 0
  \end{align}
holds for all $g, \phi \in C^2(\overline{\Om})$, $u= \sideset{}{'}{\sum} _J u _J^{\dagger} \bar{w}_J \in  C _{\(0, q\)}^ \infty \(\cl{\Om} \) \cap \dom\(\bar{\partial}^*\)$ supported in $V_{\alpha}$.
\end{deft}


\begin{rmk}\label{increasing}
By Schur’s majorization theorem, the sum of the smallest $q$ eigenvalues of an $n \times n$ Hermitian matrix is less than or equal to the sum of the smallest $q$ diagonal entries of the same matrix. If $\Om$ possesses a family of transverse vector fields satisfying  property strong ($\widetilde{P} _{q} ^{\#}$), then $\mathcal{X}^{q} \(\lambda_{\alpha, \varepsilon}\) \geq 0$ implies that $q \geq  s$ and the $(q+1)$-th smallest eigenvalue must be non-negative and thus $\mathcal{X}^{q+1} \(\lambda_{\alpha, \varepsilon}\) \geq \mathcal{X}^{q} \(\lambda_{\alpha, \varepsilon}\)$ and $\mathcal{Y}^{q+1} \(\lambda_{\alpha, \varepsilon}\) \geq \mathcal{Y}^{q} \(\lambda_{\alpha, \varepsilon}\). $ Since $\partial\bar\partial \rho$ is non-negative restricted to $T^{(1, 0)}\partial \Om$, (\ref{boundary estimate''}) holds for every $u \in  C _{\(0, q+1\)}^ \infty \(\cl{\Om} \) \cap \dom\(\bar{\partial}^*\)$ by the same reason. Therefore,  $\Om$ possesses a family of transverse vector fields satisfying  property strong ($\widetilde{P} _{q+1} ^{\#}$).
\end{rmk}

As in the proof of \autoref{boundary term}, (\ref{boundary estimate''}) holds automatically if $s=n-2, q=n-1$. This 
reduces to a weaker property $(\widetilde{P}^{\#}_{n-1})$.

\begin{deft}\label{property p}
  Let $\Om$ be a smooth bounded pseudoconvex domain in $\mathbb{C}^n$ with $n \geq 3$ and $\rho$ be the smooth defining function. We say that $\Om$ possesses a family of transverse vector fields satisfying property ($\widetilde{P} _{n-1} ^{\#}$) if  there exist open sets $\lbk V_{\alpha}\rbk _{\alpha = 1}^{N}$ in $\mathbb{C}^n$  with special boundary charts satisfying $\partial\Omega \subset \bigcup_\alpha V_\alpha$,  
and, for 
  every $\varepsilon > 0$, there exist 
 a family of  finite many smooth  vector fields $v _{\alpha, \varepsilon}$ of $(1,0)$-type with compact support  in $V_{\alpha}$ satisfying $\partial \Omega \subset \bigcup {\rm supp}(v _{\alpha, \varepsilon}) $, $d \rho \(v _{\alpha, \varepsilon}\)\not=0$ on supp$(v _{\alpha, \varepsilon})\cap \partial\Om$ and functions $\lambda_{\alpha, \varepsilon} \in C ^{2}\(\partial \Om\cap V_{\alpha}\)$, $1 \leq t_{\alpha, \varepsilon} \leq n-1$ such that \\
  (i)  $\lmdl \text{arg}  d \rho \(v_{\alpha, \varepsilon}\) \rmdl _{\text{C} ^{1}} < \varepsilon$ on $\partial \Om \cap V_{\alpha}$; \\ 
  (ii) $  \sum_{j \leq n} \lmdl L_j \lambda_{\alpha, \varepsilon} \rmdl ^{2}< \left( \lambda_{\alpha, \varepsilon}\right)^{\dagger} _{t_{\alpha, \varepsilon} t_{\alpha, \varepsilon}}$ 
  on $\partial \Om \cap V_{\alpha}$; \\
  (iii) 
  $ \left|  \theta ^{v_{\alpha, \varepsilon}} \right |^2 < \varepsilon^2   \left( \lambda_{\alpha, \varepsilon}\right)^{\dagger} _{t_{\alpha, \varepsilon} t_{\alpha, \varepsilon}} $ 
  on $\partial \Om \cap {\rm supp}(v _{\alpha, \varepsilon})$.
\end{deft}

\begin{prop}
For a smooth bounded pseudoconvex domain $\Om$ in $\mathbb{C}^n$ with $n \geq 3$, if $\partial\Om$ satisfies property ($\widetilde{P} _{q} ^{\#}$) (resp. property ($\widetilde{P} _{n-1} ^{\#}$)) 
as defined in \cite{YZ2021}, then $\Om$ possesses a family of transverse vector fields satisfying property strong ($\widetilde{P} _{q} ^{\#}$) (resp. property ($\widetilde{P} _{n-1} ^{\#}$)).  
\end{prop}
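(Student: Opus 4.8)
The plan is to unwind the two definitions and show that Yue Zhang's boundary version of Property $(\widetilde P^{\#}_{n-1})$ supplies exactly the data required by \autoref{property p}. Zhang's condition gives, for each $\varepsilon>0$, an open covering of $\partial\Om$, on each patch a smooth $(1,0)$ vector field $v_{\alpha,\varepsilon}$ with $|\arg v_{\alpha,\varepsilon}\rho|<\varepsilon$, and a function $\lambda_{\varepsilon}$ on $\partial\Om$ together with the pointwise comparison between the transverse form $\theta^{v_{\alpha,\varepsilon}}$ and a suitable Hessian term. The target \autoref{property p} asks for the same vector field (conditions (i) is verbatim Zhang's), but reformulates the Hessian hypothesis in terms of a single diagonal entry $(\lambda_\varepsilon)_{t_\alpha t_\alpha}$ rather than the full matrix $(\mathcal X^{n-1}_{ij}(\lambda_\varepsilon))$. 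So the substance is: (a) extend $\lambda_\varepsilon$ from $\partial\Om$ to a $C^2$ function near $\partial\Om$ (harmless, by taking any $C^2$ extension and shrinking the patches, as already observed in \autoref{TTT}); (b) translate Zhang's matrix inequality into the diagonal form appearing in \autoref{property p}.

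First I would recall the remark made right after \autoref{boundary term}: when $q=n-1$ and $s=n-2$, the index set $I_s=\{1,\dots,n-1\}\setminus\{t\}$ for any $1\le t\le n-1$, and the boundary term $\mathcal X^{n-1}(\rho_\delta)(u,u)$ is automatically non-negative on $T^{(1,0)}\partial\Om_\delta$ by pseudoconvexity; hence the boundary-estimate requirement \eqref{boundary estimate''} of \autoref{property Pq} is vacuous here, which is precisely why \autoref{property p} drops it. Next, with $s=n-2$ the Hermitian form is $\mathcal X^{n-1}(\lambda_\varepsilon)(u,u)=\sum'_{|K|=n-2}\sum_{i,j}(\lambda_\varepsilon)_{ij}u_{iK}\bar u_{jK}-\sum_{j\ne t}(\lambda_\varepsilon)_{jj}|u|^2$; using $\mathcal Y^{n-1}(\lambda_\varepsilon)=\sum_{i=1}^{n-1}\mu_i-\sum_{j\ne t}(\lambda_\varepsilon)_{jj}$ together with $\sum_{i=1}^n\mu_i=\operatorname{tr}(\lambda_\varepsilon)_{ij}=\sum_j(\lambda_\varepsilon)_{jj}$, one gets $\mathcal Y^{n-1}(\lambda_\varepsilon)=(\lambda_\varepsilon)_{tt}-\mu_n\le(\lambda_\varepsilon)_{tt}$, and more importantly $\mathcal X^{n-1}(\lambda_\varepsilon)(u,u)\le (\lambda_\varepsilon)_{tt}|u|^2$ after using pseudoconvexity to bound the $(n-2)$-eigenvalue sum of the restricted Hessian. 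Running this in the opposite direction, one checks that choosing $t_\alpha=t$ and $\lambda_\varepsilon$ as in Zhang's condition makes (ii) of \autoref{property p} follow from Zhang's $\sum_{j\le n}|L_j\lambda_\varepsilon|^2<\mathcal Y^{n-1}(\lambda_\varepsilon)$, and likewise the wedge-product bounds $|\theta^{v}_T\wedge\overline{\theta^v_T}|<\varepsilon(\lambda_\varepsilon)_{t_\alpha t_\alpha}$ and $|\theta^v_N\wedge\overline{\theta^v_N}|<\varepsilon$ in (iii) follow from Zhang's matrix inequality $(\overline{\theta^v_i}\theta^v_j)<\varepsilon(\mathcal X^{n-1}_{ij}(\lambda_\varepsilon))$ by testing against the relevant vectors and splitting $\theta^v=\theta^v_T+\theta^v_N$.

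Concretely the steps, in order: (1) Fix $\varepsilon>0$ and import from \cite{YZ2021} the covering $\{V'_{\alpha,\varepsilon}\}$, the vector fields $v_{\alpha,\varepsilon}$, the function $\lambda_\varepsilon$, and the index $t$ (the "$t_\alpha$" — Zhang's formulation on each patch singles out a coordinate for the diagonal entry). (2) Record that (i) of \autoref{property p} is literally Zhang's (i). (3) Compute $\mathcal Y^{n-1}(\lambda_\varepsilon)$ and relate the full Hermitian form $\mathcal X^{n-1}(\lambda_\varepsilon)$ to the single diagonal entry $(\lambda_\varepsilon)_{t t}$ using $\operatorname{tr}$, pseudoconvexity on $T^{(1,0)}\partial\Om$, and Schur majorization (already invoked in \autoref{increasing}); deduce (ii). (4) Decompose $\theta^{v_{\alpha,\varepsilon}}$ into tangential and normal parts and test Zhang's matrix inequality (iii) on the subspaces $\{e_1,\dots,e_{n-1}\}$ and $\{e_n\}$ to extract the two scalar inequalities constituting (iii) of \autoref{property p}; for the normal part note $\mathcal X^{n-1}_{nn}(\lambda_\varepsilon)\le C$ uniformly so the bound $<\varepsilon$ (after absorbing a harmless constant into $\varepsilon$) is what comes out. (5) Finally invoke \autoref{TTT} to extend $\lambda_\varepsilon$ to a $C^2$ function on a neighborhood of $\partial\Om$ and pass to a subordinate covering $\{V_{\alpha,\varepsilon}\}$ on which (i)--(iii) persist, completing the verification. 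The main obstacle I anticipate is bookkeeping in step (4): keeping track of how the matrix inequality $(\overline{\theta^v_i}\theta^v_j)<\varepsilon(\mathcal X^{n-1}_{ij}(\lambda_\varepsilon))$ interacts with the $(0,q)$-form action of $\mathcal X^q$ (it is not literally the Hessian matrix but its "action on forms"), and making sure the constants that appear when restricting to tangential vs. normal directions are uniform in $\varepsilon$ so they can be absorbed; the rest is essentially linear algebra plus the already-established pseudoconvexity facts.
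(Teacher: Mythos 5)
Your plan rests on a misreading of what Zhang's Property $(\widetilde P^{\#}_{n-1})$ provides. In \cite{YZ2021} this is a compactness condition in the lineage of Catlin's Property $(P_q)$ and McNeal's Property $(\widetilde P_q)$: it supplies only the family of functions $\lambda_\varepsilon$ with the Hessian/gradient inequality, and contains no vector fields and no transverse forms $\theta^v$. So your step (1) (``import from \cite{YZ2021} \dots the vector fields $v_{\alpha,\varepsilon}$''), your claim that condition (i) of \autoref{property p} is ``verbatim Zhang's'', and all of step (4) (testing a matrix inequality $\left(\overline{\theta^v_i}\theta^v_j\right)<\varepsilon\left(\mathcal X^{n-1}_{ij}(\lambda_\varepsilon)\right)$ that does not exist in the hypothesis) have nothing to stand on. The genuine content of the proposition is the opposite of what you set up: condition (ii) of \autoref{property p} is the part that comes for free from Zhang's hypothesis, and the vector fields must be \emph{constructed}.

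The paper's construction is short: near a fixed boundary point $Q$ in a special boundary chart, take $v_{\alpha,\varepsilon}=L_n$, the complex normal. Then $v_{\alpha,\varepsilon}\rho(Q)=L_n\rho(Q)=1$, and since the coefficients of $L_n$ in the frame are constant, the formula for the transverse form reduces to $\theta^{v_{\alpha,\varepsilon}}(Q)=(v_{\alpha,\varepsilon}\rho)^{-1}\sum_j\left((L_n\rho)(\bar L_j 1)+[\bar L_j,L_n]\rho\right)\bar\omega_j(Q)=0$. Hence $\arg(v_{\alpha,\varepsilon}\rho)$ and $\theta^{v_{\alpha,\varepsilon}}$ are both arbitrarily small on a sufficiently small neighborhood of $Q$ by continuity, which gives (i) and (iii) of \autoref{property p} (for (iii) one also uses that $(\lambda_\varepsilon)_{t_\alpha t_\alpha}>\sum_j|L_j\lambda_\varepsilon|^2\geq 0$ is available from (ii), so a sufficiently small $|\theta^{v}_T\wedge\overline{\theta^{v}_T}|$ automatically satisfies the required bound). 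Your observations in steps (2)--(3) about $\mathcal Y^{n-1}$, traces, and Schur majorization are not wrong as linear algebra, but they address a translation problem that is not where the proof lives; without producing the vector fields, the argument does not close.
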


\begin{proof}
We only consider the case of property strong ($\widetilde{P} _{q} ^{\#}$) and the argument for property ($\widetilde{P} _{n-1} ^{\#}$) is the same. 
Assume that $\partial\Om$ satisfies property ($\widetilde{P} _{q} ^{\#}$). Then (\ref{boundary estimate''}) and (ii) in \autoref{property Pq} follows trivially. For every $Q$ in $V_\alpha$, choose  
a real smooth function $\chi$ near $Q$ with compact support in $V_\alpha$ satisfying $\chi(Q)=1$ and 
let $v_{\alpha, \varepsilon} = \chi L_{n}$. 
Then $\lmdl \text{arg}  d \rho \(v_{\alpha, \varepsilon}\) \rmdl _{\text{C} ^{1}} \equiv 0$ on $\partial \Om \cap V_{\alpha}$.
Moreover, since  $ \left|  \theta ^{v_{\alpha, \varepsilon}} \right |$ is bounded on ${\rm supp}(v _{\alpha, \varepsilon}) \cap \partial{\Om}$ and the smallest eigenvalue of  $ \left( \mathcal{X}^{q}_{ij} \(\lambda_{\alpha, \varepsilon}\) \right)_{1 \leq i, j \leq n}$
can be arbitrarily large on $\partial\Om \cap V_\alpha$, 
(iii) in \autoref{property Pq} holds. Namely, $\Om$ possesses a family of transverse vector fields satisfying property strong ($\widetilde{P} _{q} ^{\#}$). 
\end{proof}

We also observe the following fact.

\begin{lem}\label{scale}
Assume that $\lambda \in C^{2}(\overline{\Om})\cap V_\alpha$ satisfies   $  \mathcal{Y}_{q} \(\lambda\) >  \sum_{j \leq n} \lmdl L _{j} \lambda  \rmdl ^{2}  $   \bigg(resp. $ \lambda^{\dagger} _{tt} >  \sum_{j \leq n} \lmdl L_j \lambda \rmdl ^{2}$  \bigg) on $\partial \Om$. Then for any $A>0$, there exists $\lambda_A  \in C^{2}(\overline{\Om})\cap V_\alpha$ such that  $  \mathcal{Y}_{q} \(\lambda_A\) >  A   \sum_{j \leq n} \lmdl L _{j} \lambda_A  \rmdl ^{2} $ \bigg(resp.   $\left( \lambda_A \right)^{\dagger} _{tt} >  A \sum_{j \leq n} \lmdl L_j \lambda_A \rmdl ^{2}$ \bigg) on $\partial \Om$.
\end{lem}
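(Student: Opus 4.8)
The plan is to produce $\lambda_A$ by composing $\lambda$ with a convex increasing function of one real variable, in the spirit of the standard trick that turns a function with self-bounded gradient into one whose Hessian dominates an arbitrarily large multiple of its gradient squared. Concretely, I would set $\lambda_A = \psi \circ \lambda$ for a suitable $\psi \in C^2(\mathbb{R})$ with $\psi' > 0$ and $\psi'' > 0$ to be chosen; the natural candidate is an exponential rescaling $\psi(t) = \tfrac{1}{c}\,e^{ct}$ or, to keep $\lambda_A$ bounded if desired, $\psi(t) = -\tfrac{1}{c} e^{-ct}$, with $c$ large depending on $A$ and on the oscillation of $\lambda$ over $\overline{\Om}$.

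The key computation is the chain rule for the complex Hessian in the special boundary frame. First, $L_j \lambda_A = \psi'(\lambda)\, L_j\lambda$, so $\sum_{j\le n}|L_j\lambda_A|^2 = \psi'(\lambda)^2 \sum_{j\le n}|L_j\lambda|^2$. Second, writing $\partial\bar\partial\lambda_A = \sum_{i,j}(\lambda_A)_{ij}\,\omega_i\wedge\bar\omega_j$, one gets
\begin{align*}
  (\lambda_A)_{ij} = \psi'(\lambda)\,\lambda_{ij} + \psi''(\lambda)\,(L_i\lambda)(\overline{L_j\lambda}).
\end{align*}
The second term is a rank-one positive semidefinite matrix, so it only increases every partial sum of eigenvalues; hence $\mathcal{Y}^q(\lambda_A) \ge \psi'(\lambda)\,\mathcal{Y}^q(\lambda) + \psi''(\lambda)\sum_{j\le n}|L_j\lambda|^2$ pointwise on $\partial\Om$ (using that the $(q{+}1)$-th smallest eigenvalue of $(\lambda_{ij})$ is nonnegative where the hypothesis $\mathcal{Y}^q(\lambda)\ge 0$ holds, exactly as in \autoref{increasing}; more carefully, one uses Schur majorization to compare the smallest-$q$ eigenvalue sum of the perturbed matrix with that of $(\psi'\lambda_{ij})$ plus the diagonal contribution of the rank-one term, which is nonnegative). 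Combining with the hypothesis $\mathcal{Y}^q(\lambda)\ge \sum|L_j\lambda|^2$, we obtain
\begin{align*}
  \mathcal{Y}^q(\lambda_A) \ge \bigl(\psi'(\lambda) + \psi''(\lambda)\bigr)\sum_{j\le n}|L_j\lambda|^2,
\end{align*}
while the target requires $\mathcal{Y}^q(\lambda_A) \ge A\,\psi'(\lambda)^2 \sum_{j\le n}|L_j\lambda|^2$. So it suffices to choose $\psi$ with $\psi' + \psi'' \ge A\,(\psi')^2$ on the interval $[\min_{\overline\Om}\lambda,\ \max_{\overline\Om}\lambda]$; for $\psi(t) = -\tfrac1c e^{-ct}$ one has $\psi'(t)=e^{-ct}$, $\psi''(t) = -c e^{-ct}$, which fails the sign, so instead take $\psi(t) = \tfrac1c e^{ct}$ with $\psi' = e^{ct}$, $\psi'' = c e^{ct}$, giving $\psi'+\psi'' = (1+c)e^{ct}$ and $A(\psi')^2 = A e^{2ct}$; the inequality $(1+c)e^{ct}\ge A e^{2ct}$ on a bounded $t$-interval holds once $c \ge A\,e^{c\,\max\lambda}/(1+c)\cdot$(something)—this is circular, so the cleaner route is $\psi(t) = \tfrac1c(e^{ct}-1)$ restricted to $t \le 0$ after first subtracting $\max_{\overline\Om}\lambda$ from $\lambda$ (harmless, since both $\mathcal{Y}^q$ and $|L_j\lambda|^2$ are translation-invariant), so that $e^{ct}\le 1$ on the relevant range and $(1+c)e^{ct}\ge (1+c)e^{2ct}$, whence $c\ge A$ finishes it. The $t_\alpha t_\alpha$-version is identical with $\mathcal{Y}^q(\lambda)$ replaced by $\lambda_{t_\alpha t_\alpha}$ throughout, since $(\lambda_A)_{t_\alpha t_\alpha} = \psi'(\lambda)\lambda_{t_\alpha t_\alpha} + \psi''(\lambda)|L_{t_\alpha}\lambda|^2 \ge \psi'(\lambda)\lambda_{t_\alpha t_\alpha}$.

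The main obstacle I anticipate is not any single estimate but the bookkeeping in the eigenvalue step: one must justify carefully that adding the rank-one positive semidefinite term $\psi''(\lambda)(L_i\lambda)(\overline{L_j\lambda})$ to $\psi'(\lambda)(\lambda_{ij})$ raises $\mathcal{Y}^q$ by at least $\psi''(\lambda)\sum_{j\le n}|L_j\lambda|^2$ and not merely by its smallest-$q$ eigenvalue sum (which could a priori be $0$). This works because $\mathcal{Y}^q$ is the smallest-$q$ eigenvalue sum \emph{minus} $\sum_{j\in I_s}(\cdot)_{jj}$, and the diagonal of a rank-one matrix $(\overline{a_i}a_j)$ contributes $\sum_{j\in I_s}|a_j|^2 \le \sum_{j\le n}|a_j|^2$ with the correct sign after the subtraction, combined with the fact that the eigenvalue-sum functional is concave (superadditive under PSD perturbations). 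I would state this as a short separate observation — essentially a refinement of the Schur-majorization remark already used in \autoref{increasing} and \autoref{boundary term} — and then the rest is the elementary ODE inequality for $\psi$ on a compact interval, which is routine.
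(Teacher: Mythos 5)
There is a genuine gap, and the approach is also far more elaborate than what is needed. The decisive error is in your key eigenvalue step for the $\mathcal{Y}^{q}$ version. Writing $a_i = L_i\lambda$, the chain rule gives $(\psi\circ\lambda)_{ij} = \psi'(\lambda)\,\lambda_{ij} + \psi''(\lambda)\,a_i\overline{a_j}$, and superadditivity of the smallest-$q$-eigenvalue sum under the PSD rank-one perturbation only yields $\sum_{i=1}^{q}\mu_i\big((\psi\circ\lambda)_{ij}\big) \ge \psi'(\lambda)\sum_{i=1}^{q}\mu_i\big(\lambda_{ij}\big)$, since the smallest $q$ eigenvalues of a rank-one PSD matrix vanish. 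Meanwhile the diagonal correction in $\mathcal{Y}^{q}$ is \emph{subtracted}: $-\sum_{j\in I_s}(\psi\circ\lambda)_{jj} = -\psi'(\lambda)\sum_{j\in I_s}\lambda_{jj} - \psi''(\lambda)\sum_{j\in I_s}|L_j\lambda|^2$. Hence the correct conclusion is $\mathcal{Y}^{q}(\psi\circ\lambda) \ge \psi'(\lambda)\,\mathcal{Y}^{q}(\lambda) - \psi''(\lambda)\sum_{j\in I_s}|L_j\lambda|^2$, with a minus sign where you assert a plus: convexity of $\psi$ works against you here, not for you, exactly at the point you flagged as the ``main obstacle.'' With the corrected sign your sufficient condition becomes $\psi' - \psi'' \ge A(\psi')^2$, which your exponential choices violate (for $\psi(t)=\tfrac{1}{c}e^{ct}$ one has $\psi'-\psi''=(1-c)e^{ct}<0$ for $c>1$). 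The diagonal version has a parallel defect: the gain from convexity is only $\psi''(\lambda)|L_{t}\lambda|^2$, not $\psi''(\lambda)\sum_{j\le n}|L_j\lambda|^2$, so after discarding it (as you do) you again need $\psi'\ge A(\psi')^2$, which the exponential does not satisfy on the whole range of $\lambda$.

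None of this machinery is needed. The Hessian entries, their eigenvalues, and the diagonal entries are all positively homogeneous of degree $1$ in $\lambda$, while $\sum_{j\le n}|L_j\lambda|^2$ is homogeneous of degree $2$; the paper therefore simply sets $\lambda_A = \tfrac{1}{A}\lambda$, so that $\mathcal{Y}^{q}(\lambda_A) = \tfrac{1}{A}\mathcal{Y}^{q}(\lambda) \ge \tfrac{1}{A}\sum_{j\le n}|L_j\lambda|^2 = A\sum_{j\le n}|L_j\lambda_A|^2$, and likewise for $(\lambda_A)_{tt}$. This is the degenerate case $\psi(t)=t/A$ of your scheme: there $\psi''=0$, the sign problem disappears, and your own reduced condition $\psi'\ge A(\psi')^2$ holds with equality. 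So your proposal can be salvaged, but only by discarding the convex-composition idea it is built around.
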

\begin{proof}
It suffices to let $\lambda_A =\frac{1}{A} \lambda$.
\end{proof}

\begin{rmk}\label{nbhd}
By the continuity and \autoref{scale}, for every $\varepsilon>0$, there exists an open neighborhood $U_\varepsilon$ of $\partial\Om$ such that $\lmdl \text{arg}  d \rho \(v_{\alpha, \varepsilon}\) \rmdl _{\text{C} ^{1}} < 2\varepsilon$  holds on $V_\alpha \cap U_\varepsilon$; $ \sum_{j \leq n} \lmdl L _{j} \lambda _{\alpha, \varepsilon} \rmdl ^{2} < \mathcal{Y}^{q} \(\lambda_{\alpha, \varepsilon}\)$ holds on $V_\alpha \cap U_\varepsilon$; $\left|  \theta ^{v_{\alpha, \varepsilon}} \right |^2 \cdot {\rm I}_{n \times n} 
  < 2 \varepsilon^2 \left( \mathcal{X}^{q}_{ij} \(\lambda_{\alpha, \varepsilon}\) \right)_{1 \leq i, j \leq n}$ holds on ${\rm supp}(v _{\alpha, \varepsilon}) \cap U_\varepsilon$ in  \autoref{property Pq}. The corresponding statement for  \autoref{property p} is also true.
\end{rmk}

\begin{rmk}
The particular form of the bound $\varepsilon$ is not important. In particular, one may replace $\varepsilon$ in (i) of \autoref{property Pq} and \autoref{property p} by $A_1(\varepsilon)$ and  $\varepsilon^2$ in (iii) of \autoref{property Pq} and \autoref{property p} by $A_2(\varepsilon)$ as long as $\lim_{ \varepsilon \rightarrow 0^+} A_j(\varepsilon)= 0$ for $j=1, 2$.
\end{rmk}

\subsection{Estimates regarding transverse vector fields}
Let $u \in C^\infty_{(0, q)}(\overline{\Om})$ with $q \geq 1$, for any  vector field $Z= \sum _{j = 1} ^{n} Z _{j} \frac{\partial}{\partial z_{j}}$ of $\(1, 0\)$-type, the following operators are introduced by Herbig-McNeal \cite{HM06} and Harrington \cite{H11}.  
 \begin{equation*}
 \begin{split}
  D _{Z} u 
 & = - \sum _{j = 1} ^{n} \(Z _{j} d \bar{z} _{j} \wedge \vartheta u + \vartheta\(Z _{j} d \bar{z} _{j} \wedge u\)\); \\
D_{\bar{Z}} u & = \nabla_{\bar{Z}} u-\sideset{}{'}{\sum}_{I} \sum_{j, k, l=1}^n \frac{\partial \rho}{\partial \bar{z}_{l}}\left(\frac{\partial}{\partial z_j} \bar{Z}_{l}\right) u_{j I} (Z \rho)^{-1} Z_{k} d \bar{z}_k  \wedge d \bar{z}_I, ~~{\rm if}~~Z\rho \not=0.
\end{split}
\end{equation*}
These operators play an essential role in our proof of the exact regularity for the $\bar\partial$-Neumann operator and we heavily rely on Harrington's new ideas  in \cite{H11} to treat the commutators $[\bar\partial^*, v_{\alpha, \varepsilon}], [\bar\partial, v_{\alpha, \varepsilon}]$.
Let us start with the review of some important properties. 
It follows from the definition that $\left[\vartheta, D_{Z}\right] \equiv 0$.
By the straightforward calculation (cf. Page 2537 and 2538 in \cite{H11}),
\begin{equation}\label{DZ}
\begin{split}
D_Z u &  =\nabla_Z u+\sum_{j=1}^n\left(\frac{\partial}{\partial z_j} Z_{j}\right) u-\sideset{}{'}{\sum}_{I} \sum_{j, k=1}^n\left(\frac{\partial}{\partial z_k} Z_{j}\right) u_{k I} d \bar{z}_j \wedge d \bar{z}_I, \\ 
 D_{\bar{Z}} u &= \nabla_{\bar{Z}} u-\sideset{}{'}{\sum}_{I} \sum_{j, k=1}^n \overline{\theta_{j}^Z} u_{jI} \frac{\bar{Z} \rho}{Z \rho} Z_{k} d \bar{z}_k  \wedge d \bar{z}_I .
\end{split}
\end{equation}
It thus follows that 
\begin{align}\label{DZ-1111}
  \(D _{Z} ^{k} - \nabla^k_Z\) u ~{\rm{and}}~    \(D _{\bar{Z}} ^{k} - \nabla^k_{\bar{Z}}\) u = {\rm the~ derivative~ of~} u {\rm{~with~order~at~most~ }}   k - 1.
\end{align} 
For the transverse vector field $v_{\alpha, \varepsilon} = \sum _{j = 1} ^{n} \(v_{\alpha, \varepsilon}\) _{j} \frac{\partial}{\partial z _{j}}$ with $v_{\alpha, \varepsilon} \rho \not=0$ on ${\rm supp}(v _{\alpha, \varepsilon})$, define
\begin{align*}
  v_{\alpha, \varepsilon} ^{\#} = \frac{d \rho \(\bar{v}_{\alpha, \varepsilon}\)}{d \rho \(v_{\alpha, \varepsilon}\)} v_{\alpha, \varepsilon} 
  , ~~~~~~ X_{\alpha, \varepsilon} = v_{\alpha, \varepsilon} ^{\#} - \bar{v}_{\alpha, \varepsilon}, ~~~~~~{\rm{and}}~~~~~~D_{X _{\alpha, \varepsilon}} = D_{v_{\alpha, \varepsilon} ^{\#} } - D_{\bar{v}_{\alpha, \varepsilon}}.
\end{align*}
Define $v_{\alpha, \varepsilon} ^{\#} = X_{\alpha, \varepsilon} =D_{X _{\alpha, \varepsilon}} = D_{v_{\alpha, \varepsilon} ^{\#} } = D_{\bar{v}_{\alpha, \varepsilon}}=0$ away from ${\rm supp}(v _{\alpha, \varepsilon})$. 
It is easy to verify that $X_{\alpha, \varepsilon}$ is a tangential vector field and Harrington discovered the  crucial property that 
$D_{X _{\alpha, \varepsilon}}  u \in \dom \(\bar{\partial} ^{*}\)$ if $u \in \dom \(\bar{\partial} ^{*}\)\cap C^\infty_{(0, q)}(\overline{\Omega})$ (cf. Page 2539 in \cite{H11}). 
Harrington also derived the following equality in \cite{H11} (cf. Page 2539): 
\begin{align}\label{alice1}
  \left[D_{X_{\alpha, \varepsilon}}, \vartheta\right] u= \left[\vartheta, D_{\bar{v}_{\alpha, \varepsilon}}\right] u
  =\left[\vartheta, \bar{v}_{\alpha, \varepsilon}\right] u + \sideset{}{'}{\sum}_{I} \sum_{j=1}^n \overline{ \theta^{v _{\alpha, \varepsilon}}_{j}} \left( v_{\alpha, \varepsilon}^{\sharp} u_{jI} \right) d \bar{z}_I+ 0{\rm th~ order~ term~in~}u,
\end{align}
The following inequalities are also derived  in \cite{H11} (cf. Page 2537 and 2538):
\begin{align}\label{AAA}
  \left| \( v_{\alpha, \varepsilon} ^{k} - \(v_{\alpha, \varepsilon} ^{\#}\) ^{k}\) u\right| 
  \leq \left|1 - \(\frac{d \rho \(\bar{v}_{\alpha, \varepsilon}\)}{d \rho \(v_{\alpha, \varepsilon}\)}\) ^{k} \right| \left| v_{\alpha, \varepsilon} ^{k} u\right| + {\rm the~ derivative~ of~} u {\rm{~of~order~at~most~ }}   k - 1, 
\end{align}
and
\begin{align}\label{BBB}
  \left|1 - \(\frac{d \rho \(\bar{v}_{\alpha, \varepsilon}\)}{d \rho \(v_{\alpha, \varepsilon}\)}\) ^{k} \right|
  = \left|1 - e ^{- 2ik \arg \(d \rho \(v_{\alpha, \varepsilon}\)\)} \right|
  \leq 2k \left|\arg \(d \rho \(v_{\alpha, \varepsilon}\)\)\right|.
\end{align}

The next result is standard and we record it here for completeness.
\begin{lem}
 For any vector field of $\bar{Z}$ of $\(0, 1\)$-type, $\left[\vartheta, \bar{Z} \right]$ is also an $\(0, 1\)$-type derivative. 
 \end{lem}

\begin{proof} 
 Let   $u = \sum' _{|J| = q} u _{J} d \bar{z} _{J}$  be an $\(0, q\)$-form and write $\(0, 1\)$-type vector field $\bar{Z} = \sum _{j = 1} ^{n} \bar{Z} _{j} \frac{\partial}{\partial \bar{z} _{j}}$. 
It follows that  \begin{align*}
    \vartheta \bar{Z} u 
    &= \vartheta \(\sideset{}{'}{\sum} _{|J| = q}\sum _{j = 1} ^{n} \bar{Z} _{j} \frac{\partial u _{J}}{\partial \bar{z} _{j}}  d \bar{z} _{J}\) = - \sideset{}{'}{\sum} _{|K| = q - 1} \sum _{j,k = 1} ^{n} \frac{\partial}{\partial z _{k}} \(\bar{Z} _{j} \frac{\partial u _{k K}}{\partial \bar{z} _{j}} \) d\bar{z} _{K} \\
    &=  - \sideset{}{'}{\sum} _{|K| = q - 1} \sum _{j,k = 1} ^{n} \(\frac{\partial \bar{Z} _{j}}{\partial z _{k}} \) \(\frac{\partial u _{k K}}{\partial \bar{z} _{j}} \) d\bar{z} _{K} - \sideset{}{'}{\sum} _{|K| = q - 1} \sum _{j,k = 1} ^{n} \bar{Z} _{j} \frac{\partial^2 u _{kK} }{\partial z _{k} \partial \bar{z} _{j}} d\bar{z} _{K},
  \end{align*}
  and
  \begin{align*}
    \bar{Z} \vartheta u 
    = \bar{Z}\(- \sideset{}{'}{\sum} _{|K| = q - 1} \sum _{k = 1} ^{n} \frac{\partial u _{kK}}{\partial z _{k}}  d \bar{z} _{K}\)
    = - \sideset{}{'}{\sum} _{|K| = q - 1} \sum _{j, k = 1} ^{n} \bar{Z} _{j} \frac{\partial^2 u _{kK}}{\partial \bar{z} _{j} \partial z _{k}}  d \bar{z} _{K}.
  \end{align*}
  Thus,
$    \left[\vartheta, \bar{Z} \right] 
     = \vartheta \bar{Z} u - \bar{Z} \vartheta $
 is a $\(0, 1\)$-type derivative.
\end{proof}

For operators $A, B$, the following commutator identity  (cf. (3.54) in \cite{S10}) holds: 
\begin{align}\label{powers of lie bracket}
  [A, B^k] 
  = \sum_{j=1}^{k} \binom{k}{j} \underbrace{[\dots[[A,B],B]\dots,B]}_{j\text{-fold}} B^{k-j} = \sum_{j=1}^{k} \binom{k}{j} B^{k-j}\underbrace{[\dots[[A,B],B]\dots,B]}_{j\text{-fold}}.
\end{align}
 The following equation follows from (\ref{alice1}) and (\ref{powers of lie bracket}) and  (cf. (4.11) in \cite{H11}):
\begin{align}\label{dbar star lie bracket}
\left|\left[D_{X_{\alpha, \varepsilon}} ^{k}, \bar{\partial}^*\right] u\right| 
& \leq C_{\varepsilon, k}\left(|\bar{\nabla} u| _{k - 1} +|u| _{k - 1} \right)+ k\left| \sideset{}{'}{\sum}_{I}\sum_{j=1}^n \overline{\theta^{v _{\alpha, \varepsilon}} _{j}} \left( v_{\alpha, \varepsilon}^{\#} \left( D_{X_{\alpha, \varepsilon}} ^{k - 1} u\right)_{jI} \right) d\bar z_I\right| \\\nonumber
& \leq C_{\varepsilon, k} \left(|\bar{\nabla} u| _{k - 1} + |u| _{k - 1}\right)+ k \sideset{}{'}{\sum}_{I} \sum_{j=1}^n \left| \overline{\theta ^{v _{\alpha, \varepsilon}} _{j} } \left( D_{X_{\alpha, \varepsilon}} ^{k}u \right)_{jI} \right| .\nonumber
\end{align}

\begin{lem}\label{D sharp}
Let   $\Xi_{\alpha, \varepsilon} =D_{X _{\alpha, \varepsilon}} - X _{\alpha, \varepsilon}$ be the part of the $0$-th order operator in $D_{X _{\alpha, \varepsilon}}$. Then $\Xi_{\alpha, \varepsilon}$ admits the Hilbert space adjoint $\Xi^{\#}_{\alpha, \varepsilon}$, which is also an $0$-th order linear operator with smooth coefficients up to the boundary. Moreover, for any $u, w \in C^\infty_{(0, q)}(\overline{\Omega})$, 
$$(D_{X _{\alpha, \varepsilon}} u, w)=(u, D_{X_{\alpha, \varepsilon}} ^{\#} w)$$ holds, 
 with $D_{X_{\alpha, \varepsilon}} ^{\#}=  v_{\alpha, \varepsilon} -\bar{v}_{\alpha, \varepsilon} ^{\#} +0$-th order linear operator with smooth coefficients up to the boundary.
\end{lem}

\begin{proof}
By the definition of  $D_{X _{\alpha, \varepsilon}}$,   
\begin{align*}
\Xi_{\alpha, \varepsilon}  ( u) = &  \sum_{j=1}^n\left(\frac{\partial}{\partial z_j} \(v_{\alpha, \varepsilon} ^{\#}\)_{j}\right) u
    -\sideset{}{'}{\sum}_{I} \sum_{j, k=1}^n\left(\frac{\partial}{\partial z_k} \(v_{\alpha, \varepsilon} ^{\#}\)_{j}\right) u_{kI} d \bar{z}_j \wedge d \bar{z}_I \\
    &-\sideset{}{'}{\sum}_{I} \sum_{j, k=1}^n \overline{{\theta}_{k}^{v_{\alpha, \varepsilon}}} u_{kI} \frac{\bar{v}_{\alpha, \varepsilon} \rho}{v_{\alpha, \varepsilon} \rho} \(v_{\alpha, \varepsilon}\)_{j} d \bar{z}_j \wedge d \bar{z}_I.
  \end{align*}
It follows that $$|(\Xi_{\alpha, \varepsilon}   u, w)| \leq C_{\varepsilon} \|u\| \|w\|$$ for any $u, w \in L^2_{(0, q)}(\Omega)$. 
Therefore, $\Xi^{\#}_{\alpha, \varepsilon}$ exists and is  an $0$-th order linear operator with smooth coefficients up to the boundary by straightforward calculation. 
For any $u, w \in C^\infty_{(0, q)}(\overline{\Omega})$, 
by integration by parts, 
$\(\(v_{\alpha, \varepsilon} ^{\#} -\bar{v}_{\alpha, \varepsilon}\)u, w\) = \(u, \(v_{\alpha, \varepsilon} -\bar{v}_{\alpha, \varepsilon} ^{\#} \) w\) + 0$-th order term in $u, w$. 
It then follows that $$(D_{X _{\alpha, \varepsilon}} u, w)=(u, D_{X_{\alpha, \varepsilon}} ^{\#} w).$$ 
\end{proof}

\begin{lem}\label{adjoint of D}
 Let $\Om \subset \bbC ^{n}$ with $n \geq 3$ be a smooth bounded pseudoconvex domain and $v _{\alpha, \varepsilon}$ be a smooth vector field with $d \rho \(v _{\alpha, \varepsilon}\)  \neq 0$ and $\lmdl \text{arg}  d \rho \(v _{\alpha, \varepsilon}\) \rmdl < \varepsilon$ on $\partial\Om \cap {\rm supp}(v _{\alpha, \varepsilon})$. Then 
   \begin{align*}
     \lnm D_{X _{\alpha, \varepsilon}} ^{k} u -\(D_{X_{\alpha, \varepsilon}} ^{\#}\)^{k} u\rnm ^{2}
     \lesssim k^{2}\varepsilon^{2}\lnm v_{\alpha, \varepsilon} ^{k} u \rnm ^{2}+ C _{\varepsilon}\(\lnm \bar{\partial} u \rnm _{k - 1} ^{2}+ \lnm \bar{\partial} ^{*} u \rnm _{k - 1}^{2} + \lnm u \rnm _{k - 1}^{2} \)
   \end{align*}
  holds for any  $u \in \dom\(\bar{\partial} ^{*}\) \cap C^\infty_{(0, q)}(\overline{\Om})$.
 \end{lem}
 \begin{proof}
   By the definition of $D _{X_{\alpha, \varepsilon}}$ and \autoref{D sharp}, write $D _{X_{\alpha, \varepsilon}} = v _{\alpha, \varepsilon} ^{\#} - \bar{v}_{\alpha, \varepsilon}+ A$ and $D _{X_{\alpha, \varepsilon}}^{\#} = v _{\alpha, \varepsilon} - \bar{v} _{\alpha, \varepsilon} ^{\#}+ B$, where $A, B$ are the $0$-order operators. Thus, we have
   \begin{align*}
     D_{X _{\alpha, \varepsilon}} ^{k} -\(D_{X_{\alpha, \varepsilon}} ^{\#}\)^{k}
     &= \(v _{\alpha, \varepsilon} ^{\#} - \bar{v}_{\alpha, \varepsilon}+ A\) ^{k} - \(v _{\alpha, \varepsilon} - \bar{v} _{\alpha, \varepsilon} ^{\#}+ B\)^{k}\\
     &= \(v _{\alpha, \varepsilon} ^{\#}\)^{k} - v _{\alpha, \varepsilon}^{k} + \nabla ^{k - 1} \bar{v}_{\alpha, \varepsilon} + \nabla ^{k - 1} \bar{v}^{\#}_{\alpha, \varepsilon} + \nabla ^{k - 1} + {\rm lower ~order~ terms}.
   \end{align*}
   Since $\bar{v}_{\alpha, \varepsilon}$ is a $\(0, 1\)$-derivative, it follows from (\ref{AAA}), (\ref{BBB})  and the benign estimate in \autoref{benign estimate} that
   \begin{align*}
    \lnm D_{X _{\alpha, \varepsilon}} ^{k} u -\(D_{X_{\alpha, \varepsilon}} ^{\#}\)^{k} u\rnm ^{2}
   & \lesssim \lnm \(v _{\alpha, \varepsilon} ^{\#}\)^{k} u -  v _{\alpha, \varepsilon}^{k} u \rnm ^{2}+ C _{\varepsilon}\lnm \bar{v}_{\alpha, \varepsilon} u \rnm _{k - 1} ^{2}+C _{\varepsilon}\lnm \bar{v}^{\#}_{\alpha, \varepsilon} u \rnm _{k - 1} ^{2}+ C _{\varepsilon}\lnm u \rnm _{k - 1}^{2} \\
     &   \lesssim  k ^{2}\lnm \arg \(d \rho \(v_{\alpha, \varepsilon}\)\) v_{\alpha, \varepsilon} ^{k} u \rnm ^{2}+ C _{\varepsilon}\(\lnm \bar{\partial} u \rnm _{k - 1}^{2} + \lnm \bar{\partial} ^{*} u \rnm _{k - 1}^{2} + \lnm u \rnm _{k - 1} ^{2}\).\\
   \end{align*}
By \autoref{nbhd}, $\lmdl \text{arg}  d \rho \(v _{\alpha, \varepsilon}\) \rmdl < 2\varepsilon$ on $U_\varepsilon \cap {\rm supp}(v _{\alpha, \varepsilon})$. Choosing a smooth function $\chi_ \varepsilon$ in $\Omega$ such that $0 \leq \chi_ \varepsilon \leq 1$, ${\rm supp} \chi_ \varepsilon \subset U_\varepsilon \cap \Omega$ and $\chi_ \varepsilon \equiv 1$ near $\partial\Omega$, it follows that 
   \begin{align*}
    &~~~ \lnm D_{X _{\alpha, \varepsilon}} ^{k} u -\(D_{X_{\alpha, \varepsilon}} ^{\#}\)^{k} u\rnm^{2} \\
     &\lesssim k ^{2}\lnm \chi_ \varepsilon \arg \(d \rho \(v_{\alpha, \varepsilon}\)\) v_{\alpha, \varepsilon} ^{k} u \rnm ^{2}+ k ^{2}\lnm (1-\chi_ \varepsilon) \arg \(d \rho \(v_{\alpha, \varepsilon}\)\) v_{\alpha, \varepsilon} ^{k} u \rnm ^{2}+ C _{\varepsilon}\(\lnm \bar{\partial} u \rnm _{k - 1}^{2} + \lnm \bar{\partial} ^{*} u \rnm _{k - 1}^{2} + \lnm u \rnm _{k - 1} ^{2}\) \\
   &   \lesssim k ^{2}\varepsilon ^{2}\lnm v_{\alpha, \varepsilon} ^{k} u \rnm ^{2}+ C_{\varepsilon, k}\( \| (1-\chi_ \varepsilon)u  \|_k^2 +\lnm \bar{\partial} u \rnm _{k - 1} ^{2}+ \lnm \bar{\partial} ^{*} u \rnm _{k - 1} ^{2}+ \lnm u \rnm _{k - 1}^{2} \) \\
   &   \lesssim k ^{2}\varepsilon ^{2}\lnm v_{\alpha, \varepsilon} ^{k} u \rnm ^{2}+ C_{\varepsilon, k}\( \|  \bar{\partial} \left((1-\chi_ \varepsilon)u\right)  \|_{k-1}^2 +\|  \bar{\partial}^* \left((1-\chi_ \varepsilon)u\right)  \|_{k-1}^2+\lnm \bar{\partial} u \rnm _{k - 1} ^{2}+ \lnm \bar{\partial} ^{*} u \rnm _{k - 1} ^{2}+ \lnm u \rnm _{k - 1}^{2} \)\\
    &   \lesssim k ^{2}\varepsilon ^{2}\lnm v_{\alpha, \varepsilon} ^{k} u \rnm ^{2}+ C_{\varepsilon, k}\( \lnm \bar{\partial} u \rnm _{k - 1} ^{2}+ \lnm \bar{\partial} ^{*} u \rnm _{k - 1} ^{2}+ \lnm u \rnm _{k - 1}^{2} \),
   \end{align*}
   where the third inequality   follows from (\ref{interior estimate}). 
 \end{proof}


\section{Exact regularity for $(0, n - 1)$-forms}

In this section, we are going to prove the exact regularity of $\bar{\partial}$-Neumann operator $N _{n -1}$.

\subsection{More estimates for $(0, n - 1)$-forms}

The following $L^2$ estimate follows from \autoref{modify basic estimate} in \autoref{prel}. 

\begin{prop}\label{condition 2 basic estimate} 
   Let $\Om$ be a bounded smooth pseudoconvex domain in $\mathbb{C}^n$ with $n \geq 3$ 
   and $U$ be a small open neighborhood of some boundary point. Assume that $\lambda \in C^{2} (\cl{\Om})$
  satisfies $\lambda^{\dagger} _{tt} \geq \left( C(\gamma, \eta, n)+10\right) \sum_{j \leq n} \lmdl L_j \lambda \rmdl ^{2}$ on $\partial\Om \cap U$, 
 for some $1 \leq t \leq n-1$. Let $u = \sum'_J u_J ^{\dagger} \bar{w}_J \in L ^{2}_{\(0, n - 1\)}\(\Om\) \cap \dom\(\bar{\partial}\) \cap \dom\(\bar{\partial} ^{*}\)$ with support in $\cl{\Om} \cap U$. Then there exists $C>0$, such that $u _{\text{Tan}} = u _{1, 2, \cdots, n-1}^{\dagger} \bar{w} _{1} \wedge \bar{w} _{2} \wedge \cdots \wedge \bar{w} _{n -1}$ satisfies
  \begin{align*}
 \int_\Om \lambda^{\dagger} _{tt} \left| u _{1, 2, \cdots, n-1}^{\dagger} \right|^2 \, dV 
    \lesssim  \lnm \bar{\partial} u \rnm ^2 + \lnm \bar{\partial}^* u  \rnm^2 
  \end{align*}
\end{prop}

\begin{proof}
Note that $u_{\text{Tan}}  \in \dom (\bar{\partial}^*)$. 
If $\lambda$ satisfies $\lambda^{\dagger} _{tt} \geq \left( C(\gamma, \eta, n)+10\right) \sum_{j \leq n} \lmdl L_j \lambda \rmdl ^{2}$ 
  on $\partial\Om$, then it follows from applying \autoref{modify basic estimate} to $u_{\text{Tan}} $ with $I_s=\{1, 2, \cdots, n-1\} \setminus \{t\}$ that
%
%
  \begin{align*}
    \lnm \bar{\partial} u  \rnm ^2 + \lnm \bar{\partial}^* u  \rnm^2 
    \geq - C\(\gamma, \eta, n\) \sum_{j\leq n} \lnm (L_j \lambda) u _{1, 2, \cdots, n-1}^{\dagger} \rnm ^2 + 2 \int_\Om \lambda^{\dagger} _{tt}  \left| u _{1, 2, \cdots, n-1}^{\dagger} \right|^2 \, dV
    \geq  \int_\Om \lambda^{\dagger} _{tt}  \left| u _{1, 2, \cdots, n-1}^{\dagger} \right|^2 \, dV.
  \end{align*}
%
%
%
\end{proof}

The next two lemmas are inspired by the idea in \cite{H11}.

\begin{lem}\label{lie bracket basic estimate}
  Let $\Om \subset \bbC ^{n}$ with $n \geq 3$ be a smooth bounded pseudoconvex domain that possesses a family of transverse vector fields satisfying property ($\widetilde{P} _{n-1} ^{\#}$). Then 
  \begin{align*}
    \lnm \left[D_{X_{\alpha, \varepsilon}} ^{k}, \bar{\partial}^*\right] u\rnm^{2}
    \lesssim \varepsilon^2 k^{2}\(\| \bar{\partial} D_{X_{\alpha, \varepsilon}} ^{k}u \|^2 + \| \bar{\partial}^* D_{X_{\alpha, \varepsilon}}^{k} u \|^2  \) + C _{\varepsilon, k} \(\lnm \bar{\partial} u \rnm _{k -1} ^2 + \lnm \bar{\partial}^*u \rnm _{k - 1}^2 + \lnm u \rnm _{k -1} ^{2}\)
  \end{align*}
  holds for any  $u \in \dom\(\bar{\partial} ^{*}\)\cap C^\infty_{(0, n-1)}(\overline{\Om})$, $k \in \mathbb{N}$.
\end{lem}

\begin{proof}
By applying (\ref{dbar star lie bracket}) and the benign estimates in the first inequality,
 we have 
  \begin{align*}
    \lnm \left[D_{X_{\alpha, \varepsilon}} ^{k}, \bar{\partial}^*\right] u\rnm^2
   & \lesssim k^2  \sideset{}{'}{\sum} _{I} \sum_{j=1}^n \lnm \overline{\theta ^{v _{\alpha, \varepsilon}} _{j}} \left(D_{X_{\alpha, \varepsilon}} ^{k} u\right)_{jI} \rnm^2 + C _{\varepsilon, k}\(\lnm \bar{\partial} u \rnm _{k -1} ^{2}+\lnm \bar{\partial} ^{*} u \rnm_{k -1} ^{2} + \lnm u \rnm_{k -1}^2 \) \\
    &\lesssim   k^2 \lnm \left| \theta ^{v _{\alpha, \varepsilon}}\right| \left( D_{X_{\alpha, \varepsilon}} ^{k} u \right)_{\rm Tan}\rnm ^{2}+ C _{\varepsilon, k}\( \lnm \left( D_{X_{\alpha, \varepsilon}} ^{k} u \right) _{\rm Norm}\rnm  ^{2} + \lnm \bar{\partial} u \rnm _{k -1} ^{2}+\lnm \bar{\partial} ^{*} u \rnm_{k -1} ^{2} + \lnm u \rnm_{k -1}^2 \). 
    \end{align*}  
By \autoref{scale} and \autoref{nbhd},  $ \left( C(\gamma, \eta, n)+10\right) \sum_{j \leq n} \lmdl L_j \lambda_{\alpha, \varepsilon} \rmdl ^{2}< \left( \lambda_{\alpha, \varepsilon}\right)^{\dagger} _{t_{\alpha, \varepsilon} t_{\alpha, \varepsilon}}$ 
  on $U_\varepsilon \cap V_{\alpha}$ and  $ \left|  \theta ^{v_{\alpha, \varepsilon}} \right |^2 \lesssim \varepsilon^2   \left( \lambda_{\alpha, \varepsilon}\right)^{\dagger} _{t_{\alpha, \varepsilon} t_{\alpha, \varepsilon}} $ 
  on $U_\varepsilon \cap {\rm supp}(v _{\alpha, \varepsilon})$. Choosing a smooth function $\chi_ \varepsilon$ in $\Omega$ such that $0 \leq \chi_ \varepsilon \leq 1$, ${\rm supp} \chi_ \varepsilon \subset U_\varepsilon \cap \Omega$ and $\chi_ \varepsilon \equiv 1$ near $\partial\Omega$, it follows that 
      \begin{align*}
   \lnm \left[D_{X_{\alpha, \varepsilon}} ^{k}, \bar{\partial}^*\right] u\rnm^2 
    &\lesssim   k^2 \lnm \chi_ \varepsilon \left| \theta ^{v _{\alpha, \varepsilon}}\right| \left( D_{X_{\alpha, \varepsilon}} ^{k} u \right)_{\rm Tan}\rnm ^{2}+ k^2 \lnm (1-\chi_ \varepsilon )\left| \theta ^{v _{\alpha, \varepsilon}}\right| \left( D_{X_{\alpha, \varepsilon}} ^{k} u \right)_{\rm Tan}\rnm ^{2} \\
     &~~~ +     C _{\varepsilon, k} \left(  \lnm \bar{\partial} u \rnm _{k -1} ^{2}+\lnm \bar{\partial} ^{*} u \rnm_{k -1} ^{2} + \lnm u _{\rm Norm}\rnm _{k} ^{2}+  \lnm u \rnm_{k-1} ^{2} \right)\\
      &\lesssim   k^2 \lnm \chi_ \varepsilon \left| \theta ^{v _{\alpha, \varepsilon}}\right| \left( D_{X_{\alpha, \varepsilon}} ^{k} u \right)_{\rm Tan}\rnm ^{2}+ k^2 \lnm (1-\chi_ \varepsilon )\left| \theta ^{v _{\alpha, \varepsilon}}\right|  D_{X_{\alpha, \varepsilon}} ^{k} u \rnm ^{2} \\
     &~~~ +     C _{\varepsilon, k} \left( \lnm \left( D_{X_{\alpha, \varepsilon}} ^{k} u \right) _{\rm Norm}\rnm  ^{2}+ \lnm \bar{\partial} u \rnm _{k -1} ^{2}+\lnm \bar{\partial} ^{*} u \rnm_{k -1} ^{2} +  \lnm u \rnm_{k-1} ^{2} \right)\\
      &\lesssim   k^2 \lnm \chi_ \varepsilon \left| \theta ^{v _{\alpha, \varepsilon}}\right| \left( D_{X_{\alpha, \varepsilon}} ^{k} u \right)_{\rm Tan}\rnm ^{2} \\ 
      &~~~~~  +    C _{\varepsilon, k} \left(  \lnm (1-\chi_ \varepsilon ) u \rnm_k ^{2} + \lnm \bar{\partial} u \rnm _{k -1} ^{2}+\lnm \bar{\partial} ^{*} u \rnm_{k -1} ^{2} +  \lnm u \rnm_{k-1} ^{2} \right) \\
       &\lesssim k^2 \lnm \chi_ \varepsilon \left| \theta ^{v _{\alpha, \varepsilon}}\right| \left( D_{X_{\alpha, \varepsilon}} ^{k} u \right)_{\rm Tan}\rnm ^{2}  + C _{\varepsilon, k} \left(  \lnm \bar{\partial} u \rnm _{k -1} ^{2}+\lnm \bar{\partial} ^{*} u \rnm_{k -1} ^{2} +  \lnm u \rnm_{k-1} ^{2} \right),
       \end{align*}
   where  the second inequality follows from  Corollary \ref{uN estimate} and   the fourth inequality from (\ref{interior estimate}).
   Moreover, 
it follows from
 \autoref{condition 2 basic estimate} that
        \begin{align*}
        \lnm \chi_ \varepsilon \left| \theta ^{v _{\alpha, \varepsilon}}\right| \left( D_{X_{\alpha, \varepsilon}} ^{k} u \right)_{\rm Tan}\rnm ^{2} 
        &\lesssim \varepsilon^2  \int_\Omega  \left( \lambda_{\alpha, \varepsilon}\right)^{\dagger} _{t_{\alpha, \varepsilon} t_{\alpha, \varepsilon}}  \left|  \chi_ \varepsilon \left(D_{X_{\alpha, \varepsilon}} ^{k} u \right)_{\rm Tan}\right| ^{2}\, dV  \\
       &\lesssim \varepsilon^2 \(\lnm \bar{\partial} \left(  \chi_ \varepsilon D_{X_{\alpha, \varepsilon}} ^{k}u \right) \rnm^2 + \lnm \bar{\partial}^* \left(  \chi_ \varepsilon D_{X_{\alpha, \varepsilon}}^{k} u \right) \rnm^2 \) \\
       &\lesssim \varepsilon^2 \(\lnm \bar{\partial}  D_{X_{\alpha, \varepsilon}} ^{k}u \rnm^2 + \lnm \bar{\partial}^* D_{X_{\alpha, \varepsilon}}^{k} u  \rnm^2 \) + C_{\varepsilon, k} \| \chi'_ \varepsilon D_{X_{\alpha, \varepsilon}}^{k} u\|^2
  \end{align*}
  for some smooth function $\chi'_ \varepsilon$ with compact support in $\Omega$. By the compactness, for any $\tau >0$, there exists $C_\tau$ such that 
  $$\| \chi'_ \varepsilon D_{X_{\alpha, \varepsilon}}^{k} u\|^2 \leq \tau \| \chi'_ \varepsilon D_{X_{\alpha, \varepsilon}}^{k} u\|_1^2 + C_\tau \| \chi'_ \varepsilon D_{X_{\alpha, \varepsilon}}^{k} u\|^2_{-1}.$$
  It follows from (\ref{interior estimate}) that  
  \begin{align*}
  \| \chi'_ \varepsilon D_{X_{\alpha, \varepsilon}}^{k} u\|^2 &\leq \tau \left( \| \bar\partial \left(\chi'_ \varepsilon D_{X_{\alpha, \varepsilon}}^{k} u\right)\|^2 +\| \bar\partial^* \left( \chi'_ \varepsilon D_{X_{\alpha, \varepsilon}}^{k} u\right)\|^2 \right) + C_\tau C_{\varepsilon, k} \|   u\|^2_{k-1}\\
 & \leq \tau C_{\varepsilon} \left(\| \bar{\partial} D_{X_{\alpha, \varepsilon}} ^{k}u \|^2 + \| \bar{\partial}^* D_{X_{\alpha, \varepsilon}}^{k} u \|^2  \right) + C_\tau C_{\varepsilon, k} \|   u\|^2_{k-1}.
\end{align*}
Therefore, the lemma follows by choosing $\tau C_{\varepsilon} \leq \varepsilon^2$.
\end{proof}

\begin{lem}\label{dbar lie bracket}
  Let $\Om \subset \bbC ^{n}$ with $n \geq 3$ be a smooth bounded pseudoconvex domain that possesses a family of transverse vector fields satisfying property ($\widetilde{P} _{n-1} ^{\#}$). Then 
  \begin{align*}
    \lnm \left[\bar{\partial}, v_{\alpha, \varepsilon}\right] D_{X_{\alpha, \varepsilon}} ^{k-1} u\rnm^{2} 
    \lesssim \varepsilon^{2}\(\lnm \bar{\partial} D_{X_{\alpha, \varepsilon}} ^{k}u \rnm^2 + \lnm \bar{\partial}^* D_{X_{\alpha, \varepsilon}}^{k} u \rnm^2  \) + C _{\varepsilon, k} \(\lnm \bar{\partial} u \rnm _{k -1} ^2 + \lnm \bar{\partial}^*u \rnm _{k - 1}^2 + \lnm u \rnm _{k -1} ^{2}+ {\rm s.c.}  \lnm  u \rnm ^2_{k}\)
  \end{align*}
  holds for any  $u \in \dom\(\bar{\partial} ^{*}\)\cap C^\infty_{(0, n-1)}(\overline{\Om})$, $k \in \mathbb{N}$.
\end{lem}

\begin{proof}  
It follows from (\ref{AAA}), (\ref{BBB}) and property ($\widetilde{P} _{n-1} ^{\#}$) that
  \begin{align}\label{duan123}
   \left | v_{\alpha, \varepsilon}D _{X_{\alpha, \varepsilon}} ^{k - 1}   u \right|
    &\leq  \left | v_{\alpha, \varepsilon} ^{\#} D _{X_{\alpha, \varepsilon}} ^{k - 1}   u\right| + \left| \(v_{\alpha, \varepsilon} - v_{\alpha, \varepsilon} ^{\#}\)D _{X_{\alpha, \varepsilon}} ^{k - 1}   u\right| \\\nonumber
    & \leq \left| D _{v_{\alpha, \varepsilon} ^{\#}} D _{X_{\alpha, \varepsilon}} ^{k - 1}   u \right| +  \left| \(v_{\alpha, \varepsilon} ^{\#}-D _{v_{\alpha, \varepsilon} ^{\#}}\) D _{X_{\alpha, \varepsilon}} ^{k - 1}   u\right| + 2\varepsilon \left| v_{\alpha, \varepsilon} D _{X_{\alpha, \varepsilon}} ^{k - 1}  N _{\delta} u\right| + C _{\varepsilon}\left|  u\right| _{k - 1}   \\\nonumber
    & \leq \left| D _{X_{\alpha, \varepsilon}} ^{k}   u\right| + \left| D _{\bar{v}_{\alpha, \varepsilon}} D _{X_{\alpha, \varepsilon}} ^{k - 1}   u\right| + C _{\varepsilon}\left|   u\right|_{k-1}  + 2\varepsilon \left| v_{\alpha, \varepsilon} D _{X_{\alpha, \varepsilon}} ^{k - 1}   u\right|.
  \end{align}
  Moreover, 
  noting that $\left[\bar{\partial}, v_{\alpha, \varepsilon}\right] - \theta ^{v_{\alpha, \varepsilon}} \wedge v_{\alpha, \varepsilon}$ is a $\(1, 0\)$-tangential derivative, it follows from the benign estimate, the small-large constant inequality and  (\ref{duan123}) that
  \begin{align*}
    &\lnm \left[\bar{\partial}, v_{\alpha, \varepsilon}\right] D _{X_{\alpha, \varepsilon}} ^{k-1} u \rnm ^{2}\\\nonumber
    \lesssim &\lnm \theta ^{v_{\alpha, \varepsilon}} \wedge v_{\alpha, \varepsilon} D _{X_{\alpha, \varepsilon}} ^{k-1} u \rnm ^{2} + \lnm \(\left[\bar{\partial}, v_{\alpha, \varepsilon}\right] -\theta ^{v_{\alpha, \varepsilon}} \wedge v_{\alpha, \varepsilon}\)D _{X_{\alpha, \varepsilon}} ^{k-1} u \rnm ^{2}\\\nonumber
    \leq &\lnm \theta ^{v_{\alpha, \varepsilon}} \wedge v_{\alpha, \varepsilon} D _{X_{\alpha, \varepsilon}} ^{k - 1}  u \rnm ^{2} + C_{\varepsilon} \bigg(\lnm \bar{\partial} D _{X_{\alpha, \varepsilon}} ^{k-1} u \rnm ^{2} + \lnm \bar{\partial} ^{*}D _{X_{\alpha, \varepsilon}} ^{k-1}  u \rnm ^{2} + \lnm D _{X_{\alpha, \varepsilon}} ^{k-1} u \rnm \lnm D _{X_{\alpha, \varepsilon}} ^{k-1} u \rnm _{1}\bigg),\\\nonumber
     \lesssim & \lnm \theta ^{v_{\alpha, \varepsilon}} \wedge v_{\alpha, \varepsilon} D _{X_{\alpha, \varepsilon}} ^{k - 1}  u \rnm ^{2} + C _{\varepsilon, k}\left(\lnm \bar{\partial}  u \rnm_{k-1} ^{2} + \lnm \bar{\partial} ^{*}  u \rnm_{k-1} ^{2}  + \lnm   u \rnm_{k-1}^2+ {\rm s.c.} \lnm  u \rnm _{k}^2  \right)\\\nonumber  
      \lesssim &\lnm \left| \theta  ^{v_{\alpha, \varepsilon}} \right| \left| D _{X_{\alpha, \varepsilon}} ^{k}  u  \right| \rnm ^{2} +  C _{\varepsilon, k} \left(\lnm \bar{\partial}  u \rnm_{k-1} ^{2} + \lnm \bar{\partial} ^{*}  u \rnm_{k-1} ^{2}  + \| D _{\bar{v}_{\alpha, \varepsilon}} D _{X_{\alpha, \varepsilon}} ^{k - 1} u \|^2 + \lnm  u\rnm _{k -1} ^{2}+ {\rm s.c.} \lnm  u \rnm _{k}^2 \right) \\\nonumber
       \lesssim &\lnm \left| \theta  ^{v_{\alpha, \varepsilon}} \right| \( D _{X_{\alpha, \varepsilon}} ^{k}  u  \)_{\rm Tan} \rnm ^{2} \\
       &~~~+  C _{\varepsilon, k} \left(\lnm \(   D _{X_{\alpha, \varepsilon}}^{k} u \) _{\rm Norm}  \rnm^{2} +\lnm \bar{\partial}  u \rnm_{k-1} ^{2} + \lnm \bar{\partial} ^{*}  u \rnm_{k-1} ^{2}  +\lnm \bar{\partial} D _{X_{\alpha, \varepsilon}} ^{k-1}  u \rnm ^{2} +\lnm \bar{\partial} ^{*} D _{X_{\alpha, \varepsilon}} ^{k-1}  u   \rnm ^{2} + \lnm  u\rnm _{k -1} ^{2}+ {\rm s.c.} \lnm  u \rnm _{k}^2 \right) \\\nonumber
         \lesssim &\lnm \left| \theta  ^{v_{\alpha, \varepsilon}} \right| \( D _{X_{\alpha, \varepsilon}} ^{k}  u  \)_{\rm Tan} \rnm ^{2} +C _{\varepsilon, k} \left(\lnm \bar{\partial}  u \rnm_{k-1} ^{2} + \lnm \bar{\partial} ^{*}  u \rnm_{k-1} ^{2}   + \lnm  u\rnm _{k -1} ^{2}+ {\rm s.c.} \lnm  u \rnm _{k}^2 \right) .\\\nonumber
     \end{align*}
Then the lemma follows from applying the same argument to $\lnm \left| \theta  ^{v_{\alpha, \varepsilon}} \right| \( D _{X_{\alpha, \varepsilon}} ^{k}  u  \)_{\rm Tan} \rnm ^{2}$ as in the proof of \autoref{lie bracket basic estimate}.
\end{proof}

\subsection{A priori estimates of $N _{n - 1}$}

Now we are ready to prove the following a priori estimates using the method of elliptic regularization. The approach follows \cite{KN65, S08}.

\begin{prop}\label{priori estiamte}
  Let $\Om \subset \bbC ^{n}$ with $n \geq 3$ be a smooth bounded pseudoconvex domain that possesses a family of transverse vector fields satisfying property ($\widetilde{P} _{n-1} ^{\#}$). Then, for every nonnegative integer $k$, there exist $\delta _{0} > 0$, and a positive constant $C$, such that 
  \begin{align*}
   \lnm N_{\delta, n - 1} u \rnm _{k} \leq C \lnm u \rnm _{k}
  \end{align*}
  holds for any $u \in C _{\(0, n - 1\)} ^{\infty} \(\cl{\Om}\)$, $ 0 < \delta \leq \delta _{0}$.
\end{prop} 

\begin{proof}
We simply denote $N_{\delta, n - 1}$ by $N_{\delta}$. 
  Since the exact regularity of $N_{q-1}$ is equivalent to the exact regularity of the Bergman projections $B _{q - 2}, B_{q - 1}, B _{q}$ (cf. \cite{BS1990}) and since $N _{n}$ is exactly  regularity, it follows that $B _{n - 1}$ is also exactly regular. Thus \autoref{B regular} yields
  \begin{align}\label{Nu}
    \left\|N_{\delta} u\right\|_k 
    \leq C _{k} \left(\left\|\bar{\partial} N_{\delta} u\right\|_k+\left\|\bar{\partial}^* N_{\delta} u\right\|_k\right) .
  \end{align}
  We claim that, for every nonnegative integer $\beta$, there exist $\delta _{0} > 0$, and a constant $C$, such that 
  \begin{align}\label{induction}
    \lnm \bar{\partial} N _{\delta} u \rnm _{\beta} ^{2} + \lnm \bar{\partial} ^{*} N _{\delta}u \rnm _{\beta}^{2} + \delta\lnm \nabla N _{\delta} u \rnm _{\beta}^{2}
    \leq C \lnm u \rnm _{\beta} ^{2}
  \end{align}
  holds for any $u \in C _{\(0, n - 1\)} ^{\infty} \(\cl{\Om}\), 0 < \delta \leq \delta _{0}$.
     Prove the claim using the induction on $\beta$. 
  When $\beta = 0$,
  \begin{align*}
    \lnm \bar{\partial} N _{\delta} u \rnm ^{2} + \lnm \bar{\partial} ^{*} N _{\delta}u \rnm ^{2} + \delta\lnm \nabla N _{\delta} u \rnm ^{2}
    = \(\Box _{\delta} N _{\delta} u, N _{\delta} u\)
    = \(u, N _{\delta} u\)
    \leq C \lnm u \rnm ^{2}.
  \end{align*} 
  Now assume that (\ref{induction}) holds for all $0 \leq \beta \leq k - 1$. We aim to prove (\ref{induction})  for $\beta = k$.

 First we deal with  the term $\delta\lnm \nabla N _{\delta} u \rnm ^{2}$. Since the normal derivative $\frac{\partial}{\partial \nu} = L_{n} + \bar{L} _{n} = (L_{n} - \bar{L} _{n}) + 2\bar{L} _{n}$ with $L_{n} - \bar{L} _{n}$ being tangential,  
   $\nabla$ can be written as the sum of tangential derivatives $ \nabla _{T}$ and the $\(0, 1\)$-derivative $\bar{L} _{n}$, where $T$ denotes an arbitrary tangential vector field. 
By applying the benign estimates in \autoref{benign estimate} and (\ref{Nu}), we obtain
  \begin{align}\label{wt1}
    \delta\lnm \nabla N _{\delta} u \rnm _{k} ^{2}
    \lesssim & \delta \lnm \nabla _{T} N _{\delta} u \rnm _{k} ^{2} + \delta \lnm \bar{L} _{n}  N _{\delta} u \rnm _{k} ^{2}\\\nonumber
    \leq&\delta \lnm \nabla _{T} N _{\delta} u \rnm _{k} ^{2} + \delta C \(\lnm \bar{\partial} N _{\delta} u\rnm _{k} ^{2} + \lnm \bar{\partial} ^{*} N _{\delta} u\rnm _{k} ^{2} + \lnm N _{\delta} u\rnm _{k} ^{2}\) \\\nonumber
    \leq&\delta \lnm \nabla _{T} N _{\delta} u \rnm _{k} ^{2} + \delta C_{k} \(\lnm \bar{\partial} N _{\delta} u\rnm _{k} ^{2} + \lnm \bar{\partial} ^{*} N _{\delta} u\rnm _{k} ^{2}\). \nonumber
  \end{align}
By \autoref{nbhd}, we may assume that (i),(ii),(iii) in \autoref{property p} hold in $U_\varepsilon \cap V_\alpha$ for all $\alpha$. Then it suffices to estimate the first term: 
  \begin{align*}
     \delta \lnm \nabla _{T} N _{\delta} u \rnm _{k} ^{2} 
    \leq \delta C_{\varepsilon} \( \sum_{\alpha} \lnm v_{\alpha, \varepsilon} ^{k} \nabla _{T} N _{\delta} u \rnm ^{2} + \lnm Y \nabla _{T} N _{\delta} u \rnm _{k - 1} ^{2} + \sum _{j = 1} ^{n} \lnm \frac{\partial}{\partial \bar{z} _{j}} \nabla _{T} N _{\delta} u \rnm _{k - 1} ^{2} +      \|  N _{\delta} u \|^2_{k} \),
  \end{align*}
  where $Y$ is a tangential vector field of $(1,0)$-type.
 By \autoref{s.b.c comparable}, 
  \begin{align*}
    \lnm Y \nabla _{T} N _{\delta} u \rnm _{k - 1} ^{2} 
    \lesssim  & \lnm Y \nabla^{(b)}_{T} N _{\delta} u \rnm _{k - 1} ^{2} + \|N_{\delta} u\|^2_k \\
    \lesssim & \lnm \bar{\partial} \nabla^{(b)}_{T} N _{\delta} u\rnm _{k - 1} ^{2} + \lnm \bar{\partial} ^{*} \nabla^{(b)}_{T} N _{\delta} u\rnm _{k - 1} ^{2} + \lnm \nabla^{(b)}_{T} N _{\delta} u\rnm _{k - 1} \lnm \nabla^{(b)}_{T} N _{\delta} u\rnm _{k} + \lnm N _{\delta} u \rnm _{k } ^{2} \\
    \lesssim & \lnm  \nabla^{(b)}_{T} \bar{\partial} N _{\delta} u\rnm _{k - 1} ^{2} + \lnm \nabla^{(b)}_{T}\bar{\partial} ^{*} N _{\delta} u\rnm _{k - 1} ^{2} +\lnm [\bar\partial, \nabla^{(b)}_{T}] N _{\delta} u\rnm _{k - 1}^{2} + \lnm [\bar\partial ^{*}, \nabla^{(b)}_{T}] N _{\delta} u\rnm _{k - 1} ^{2} \\
    &+ \lnm \nabla^{(b)}_{T} N _{\delta} u\rnm _{k - 1} \lnm \nabla^{(b)}_{T} N _{\delta} u\rnm _{k} + \lnm  N _{\delta} u \rnm _{k } ^{2} \\
    \leq &C_{\varepsilon, k} \bigg(\lnm \bar{\partial} N _{\delta} u\rnm _{k} ^{2} + \lnm \bar{\partial} ^{*} N _{\delta} u\rnm _{k} ^{2} \bigg) + \text{l.c.}\lnm N _{\delta} u\rnm _{k}^{2} + \text{s.c.} \lnm \nabla^{(b)}_{T}  N _{\delta} u\rnm^2 _{k} \\
    \leq & C_{\varepsilon, k}\bigg(\lnm \bar{\partial} N _{\delta} u\rnm _{k} ^{2} + \lnm \bar{\partial} ^{*} N _{\delta} u\rnm _{k} ^{2} \bigg) + \text{s.c.} \(\lnm \nabla_{T} N _{\delta} u\rnm _{k} ^{2} + \lnm N _{\delta} u\rnm _{k} ^{2}\)\\
    \leq& C_{\varepsilon, k}\bigg(\lnm \bar{\partial} N _{\delta} u\rnm _{k} ^{2} + \lnm \bar{\partial} ^{*} N _{\delta} u\rnm _{k} ^{2} \bigg) + \text{s.c.} \lnm \nabla_{T} N _{\delta} u\rnm _{k} ^{2},
  \end{align*}
  where we apply the benign estimates in the second inequality, and  the small-large constant inequality in the fourth inequality. By the similar argument, we also have
  \begin{align*}
    \sum _{j = 1} ^{n} \lnm \frac{\partial}{\partial \bar{z} _{j}} \nabla _{T} N _{\delta} u \rnm _{k - 1} ^{2}
    \lesssim C_{\varepsilon, k}\bigg(\lnm \bar{\partial} N _{\delta} u\rnm _{k} ^{2} + \lnm \bar{\partial} ^{*} N _{\delta} u\rnm _{k} ^{2} \bigg).
  \end{align*}
  Note that the only difference here is that  there is no $\lnm \nabla^{(b)}_{T} N _{\delta} u\rnm _{k} ^{2}$, thus no $\lnm \nabla_{T} N _{\delta} u\rnm _{k} ^{2}$ terms on the right hand side, when applying the benign estimate. 
Therefore, it follows that
  \begin{align}\label{wentian26}
    &\delta \lnm \nabla _{T} N _{\delta} u \rnm _{k} ^{2} 
    \leq \delta C_{\varepsilon, k} \left( \sum_\alpha \lnm v_{\alpha, \varepsilon} ^{k} \nabla _{T} N _{\delta} u \rnm ^{2}+ \lnm \bar{\partial} N _{\delta} u\rnm _{k} ^{2} + \lnm \bar{\partial} ^{*} N _{\delta} u\rnm _{k} ^{2}  +\text{s.c.} \lnm \nabla_{T} N _{\delta} u\rnm _{k} ^{2} \right).   \end{align}
  Combining (\ref{wt1}), (\ref{wentian26}) and  choosing s.c. small enough such that  $C_{\varepsilon, k} \cdot \text{s.c.} \ll 1$, we have 
  \begin{align}\label{wt2}
    \delta \lnm \nabla  N _{\delta} u \rnm _{k} ^{2} 
    \lesssim \delta C_{\varepsilon,k} \bigg( \sum_\alpha \lnm v_{\alpha, \varepsilon} ^{k} \nabla _{T} N _{\delta} u \rnm ^{2}+ \lnm \bar{\partial} N _{\delta} u\rnm _{k} ^{2} + \lnm \bar{\partial} ^{*} N _{\delta} u\rnm _{k} ^{2} \bigg).
  \end{align}
 Thus, it remains to estimate  
 \begin{align}\label{gradient1}
    & \delta \lnm  v_{\alpha, \varepsilon} ^{k} \nabla _{T} N _{\delta} u \rnm  ^{2} \\\nonumber
    \lesssim& \delta\lnm \(v_{\alpha, \varepsilon} ^{k} - \(v_{\alpha, \varepsilon}^{\#}\) ^{k}\)\nabla _{T} N _{\delta} u \rnm ^{2} + \delta\lnm \(v_{\alpha, \varepsilon}^{\#}\) ^{k} \nabla _{T} N _{\delta} u \rnm ^{2}\\\nonumber
    \lesssim & \delta\lnm \(v_{\alpha, \varepsilon}^{\#}\) ^{k} \nabla _{T} N _{\delta} u \rnm ^{2} + 4k ^{2}\varepsilon^{2} \delta\lnm  v_{\alpha, \varepsilon} ^{k} \nabla _{T} N _{\delta} u \rnm ^{2}+\delta C_{\varepsilon} \lnm \nabla _{T} N _{\delta} u \rnm _{k - 1}^{2}\\\nonumber
    \lesssim &\delta\lnm D _{v_{\alpha, \varepsilon}^{\#}} ^{k} \nabla _{T} N _{\delta} u \rnm ^{2} + \delta\lnm \(\(v_{\alpha, \varepsilon}^{\#}\)^{k} - D _{v_{\alpha, \varepsilon}^{\#}} ^{k}\) \nabla _{T} N _{\delta} u \rnm ^{2} + 4k ^{2}\varepsilon^{2} \delta \lnm v_{\alpha, \varepsilon} ^{k}\nabla _{T} N _{\delta} u \rnm ^{2} +\delta C_{\varepsilon} \lnm \nabla _{T} N _{\delta} u \rnm _{k - 1}^{2}  \\\nonumber
    \lesssim &\delta\lnm D _{v_{\alpha, \varepsilon}^{\#}} ^{k} \nabla _{T} N _{\delta} u \rnm ^{2}  + 4k ^{2}\varepsilon^{2} \delta\lnm v_{\alpha, \varepsilon} ^{k} \nabla _{T} N _{\delta} u \rnm ^{2}+\delta C_{\varepsilon} \lnm \nabla _{T} N _{\delta} u \rnm _{k - 1}^{2} \\\nonumber
    \leq& \delta\lnm D _{X_{\alpha, \varepsilon}} ^{k} \nabla _{T} N _{\delta} u \rnm ^{2} + \delta C_{\varepsilon}\lnm D_{\bar{v}_{\alpha, \varepsilon}} \nabla _{T} N _{\delta} u \rnm_{k - 1}^{2} + \delta C_{\varepsilon}\lnm  N _{\delta} u \rnm _{k}^{2} + 4k ^{2}\varepsilon^{2} \delta \lnm v_{\alpha, \varepsilon} ^{k}\nabla _{T} N _{\delta} u \rnm ^{2},\nonumber
  \end{align}
where  the second inequality follows from (\ref{BBB}), condition (i) in \autoref{property p} and the fourth inequality is obtained by applying (\ref{DZ-1111}). By the definition of $D_{\bar{v}_{\alpha, \varepsilon}}$, the highest order term is a $\(0, 1\)$-derivative. It then follows from \autoref{s.b.c comparable} and similar argument that 
  \begin{align*}
    &\delta C _{\varepsilon} \lnm D_{\bar{v}_{\alpha, \varepsilon}} \nabla _{T} N _{\delta} u \rnm_{k - 1}^{2}  \lesssim \delta C _{\varepsilon} \(\lnm \bar{v}_{\alpha, \varepsilon} \nabla_T^{(b)} N _{\delta} u \rnm_{k - 1}^{2} + \lnm  N _{\delta} u \rnm_{k }^{2} \) 
   \leq \delta C _{\varepsilon, k}\(\lnm \bar{\partial}  N _{\delta} u \rnm_{k}^{2} + \lnm \bar{\partial} ^{*} N _{\delta} u \rnm_{k }^{2} \).
  \end{align*}
  By choosing $\varepsilon$ small enough such that $4 k^2 \varepsilon^2 \ll 1$, we get 
  \begin{align}\label{nabla}
    \delta \lnm  v_{\alpha, \varepsilon} ^{k} \nabla _{T} N _{\delta} u \rnm  ^{2}
     \lesssim \delta\lnm D _{X_{\alpha, \varepsilon}} ^{k} \nabla _{T} N _{\delta} u \rnm ^{2} + \delta C _{\varepsilon, k}\(\lnm \bar{\partial}  N _{\delta} u \rnm_{k}^{2} + \lnm \bar{\partial} ^{*} N _{\delta} u \rnm_{k }^{2}\).
  \end{align}
By the similar argument, we get 
  \begin{align*}
   \lnm \bar{\partial} ^{*} N _{\delta} u \rnm _{k} ^{2}   & \leq C_{\varepsilon}\(\sum_\alpha \lnm v_{\alpha, \varepsilon} ^{k} \bar{\partial} ^{*} N _{\delta} u \rnm  ^{2} + \lnm Y \bar{\partial} ^{*} N _{\delta} u \rnm _{k - 1} ^{2} + \sum _{j = 1} ^{n} \lnm \frac{\partial}{\partial \bar{z} _{j}}\bar{\partial} ^{*} N _{\delta} u \rnm _{k - 1} ^{2} +  \lnm \bar{\partial} ^{*} N _{\delta} u \rnm _{k-1} ^{2} \)\\\nonumber
   &  \lesssim C_{\varepsilon} \(\sum_\alpha \lnm v_{\alpha, \varepsilon} ^{k} \bar{\partial} ^{*} N _{\delta} u \rnm  ^{2} + \lnm \bar{\partial} \bar{\partial} ^{*} N _{\delta} u \rnm _{k - 1} ^{2} +  \lnm\bar{\partial} ^{*} N _{\delta} u \rnm _{k - 1} ^{2} + \lnm\bar{\partial} ^{*} N _{\delta} u \rnm _{k - 1}\lnm\bar{\partial} ^{*} N _{\delta} u \rnm _{k}\)\\\nonumber
   &  \leq C_{\varepsilon, k}\(\sum_\alpha \lnm v_{\alpha, \varepsilon} ^{k} \bar{\partial} ^{*} N _{\delta} u \rnm  ^{2} + 
   \lnm u \rnm _{k } ^{2}\) + \text{s.c.}C_{\varepsilon} \lnm\bar{\partial} ^{*} N _{\delta} u \rnm _{k} ^{2},\nonumber
  \end{align*}
 where the second inequality follows from the benign estimates and the third inequality from the small-large constant inequality, the induction estimates and the following equality: 
  \begin{align}\label{dbar dbar star2}
    \lnm \vartheta \bar{\partial}  N _{\delta} u \rnm _{k - 1} ^{2} + \lnm \bar{\partial} \bar{\partial} ^{*} N _{\delta} u \rnm _{k - 1} ^{2}
    \leq C_{k} \(\lnm \bar{\partial}  N _{\delta} u \rnm _{k - 1} ^{2} + \lnm u \rnm _{k - 1} ^{2} + \lnm u \rnm _{1} ^{2} + \delta \lnm \nabla N _{\delta} u\rnm _{k - 1} ^{2} + \delta ^{2} \lnm u \rnm _{k} ^{2}\) 
    \leq C _{k} \lnm u \rnm _{k} ^{2} 
  \end{align}
  by  \autoref{B regular}.
  Again choosing s.c. small enough with s.c.$C_{\varepsilon} \ll 1$,  we obtain
 \begin{align}\label{wt3}
    \lnm \bar{\partial} ^{*} N _{\delta} u \rnm _{k} ^{2}
    \lesssim C_{\varepsilon, k} \( \sum_\alpha\lnm v_{\alpha, \varepsilon} ^{k} \bar{\partial} ^{*} N _{\delta} u \rnm  ^{2} + \lnm u \rnm _{k} ^{2} \) .
  \end{align}
By the similar argument, we also have 
   \begin{align}\label{dbar star}
    \lnm v_{\alpha, \varepsilon} ^{k} \bar{\partial} ^{*} N _{\delta} u \rnm ^{2}
    \lesssim \lnm D_{X_{\alpha, \varepsilon}}^{k}\bar{\partial} ^{*} N _{\delta} u \rnm ^{2} + C _{\varepsilon, k} \lnm u \rnm _{k} ^{2} .
  \end{align}
Now, for the term $\lnm \bar{\partial} N _{\delta} u\rnm _{k} ^{2}$, 
we apply the similar argument  to $\bar{\partial} N _{\delta} u + \delta \frac{\partial}{\partial \nu} N _{\delta} u \wedge \bar{\omega} _{n} \in \dom \(\bar{\partial} ^{*}\)$ and get 
  \begin{align}\label{2}
    & \lnm \bar{\partial} N _{\delta} u \rnm _{k} ^{2} \\\nonumber
     \leq & C _{\varepsilon}\bigg(\sum_\alpha\lnm v_{\alpha, \varepsilon} ^{k} \bar{\partial}N _{\delta} u\rnm ^{2} + \lnm Y \bar{\partial}N _{\delta} u \rnm _{k - 1} ^{2} + \sum _{j = 1} ^{n} \lnm \frac{\partial}{\partial \bar{z} _{j}}\bar{\partial}N _{\delta} u \rnm _{k - 1} ^{2}+  \lnm \bar{\partial} N _{\delta} u \rnm _{k-1} ^{2}\bigg)\\\nonumber
    \lesssim & C _{\varepsilon}\bigg(\sum_\alpha \lnm v_{\alpha, \varepsilon} ^{k} \bar{\partial}N _{\delta} u\rnm ^{2} + \lnm Y \(\bar{\partial}N _{\delta} u + \delta \frac{\partial}{\partial \nu} N _{\delta} u\wedge \bar{\omega} _{n}\) \rnm _{k - 1} ^{2} + \sum _{j = 1} ^{n} \lnm \frac{\partial}{\partial \bar{z} _{j}}\(\bar{\partial}N _{\delta} u + \delta \frac{\partial}{\partial \nu} N _{\delta} u\wedge \bar{\omega} _{n}\)\rnm _{k - 1} ^{2}\\\nonumber
    &+  \lnm Y \( \delta \frac{\partial}{\partial \nu} N _{\delta} u\wedge \bar{\omega} _{n}\) \rnm _{k - 1} ^{2} + \sum _{j = 1} ^{n} \lnm \frac{\partial}{\partial \bar{z} _{j}}\(\delta \frac{\partial}{\partial \nu} N _{\delta} u\wedge \bar{\omega} _{n}\)\rnm _{k - 1} ^{2} + \|u\|^2_{k-1}\bigg)\\\nonumber
    \leq & C _{\varepsilon}\bigg(\sum_\alpha\lnm v_{\alpha, \varepsilon} ^{k} \bar{\partial}N _{\delta} u\rnm ^{2} + \lnm Y \(\bar{\partial}N _{\delta} u + \delta \frac{\partial}{\partial \nu} N _{\delta} u\wedge \bar{\omega} _{n}\) \rnm _{k - 1} ^{2} + \sum _{j = 1} ^{n} \lnm \frac{\partial}{\partial \bar{z} _{j}}\(\bar{\partial}N _{\delta} u + \delta \frac{\partial}{\partial \nu} N _{\delta} u\wedge \bar{\omega} _{n}\)\rnm _{k - 1} ^{2}  \\\nonumber
    &+\delta ^{2}\lnm \nabla N _{\delta} u \rnm _{k} ^{2} + \|u\|^2_{k-1} \bigg)\\\nonumber 
    \lesssim & C _{\varepsilon}\bigg(\sum_\alpha\lnm v_{\alpha, \varepsilon} ^{k} \bar{\partial}N _{\delta} u\rnm ^{2} + \text{l.c.} \|u\|^2_k + \text{l.c.} \delta ^{2}\lnm \nabla N _{\delta} u \rnm _{k} ^{2} + \text{s.c.}  \lnm \bar{\partial} N _{\delta} u \rnm _{k} ^{2} \bigg), \nonumber
  \end{align}
where,  by the similar argument as above, the last inequality follows from
  \begin{align*}
    &\lnm Y \(\bar{\partial}N _{\delta} u + \delta \frac{\partial}{\partial \nu} N _{\delta} u\wedge \bar{\omega} _{n}\) \rnm _{k - 1} ^{2} + \sum _{j = 1} ^{n} \lnm \frac{\partial}{\partial \bar{z} _{j}}\(\bar{\partial}N _{\delta} u + \delta \frac{\partial}{\partial \nu} N _{\delta} u\wedge \bar{\omega} _{n}\)\rnm _{k - 1} ^{2}\\
    \lesssim &  \lnm \vartheta \bar{\partial} N _{\delta} u \rnm _{k - 1} ^{2} + \text{l.c.} \lnm \bar{\partial} N _{\delta}u\rnm _{k - 1} ^{2} + \text{l.c.} \delta ^{2}\lnm \nabla N _{\delta} u \rnm _{k} ^{2} + \text{s.c.}  \lnm \bar{\partial} N _{\delta} u \rnm _{k} ^{2} \\\nonumber
    \lesssim & \text{l.c.} \|u\|^2_k + \text{l.c.} \delta ^{2}\lnm \nabla N _{\delta} u \rnm _{k} ^{2} + \text{s.c.}  \lnm \bar{\partial} N _{\delta} u \rnm _{k} ^{2} .\\\nonumber
  \end{align*} 
 By choosing s.c. sufficiently small such that $\text{s.c.} \cdot C _{\varepsilon} \ll 1$, then we obtain 
   \begin{align}\label{wt4}
    \lnm \bar{\partial} N _{\delta} u\rnm _{k} ^{2}
    \lesssim C _{\varepsilon} \sum_\alpha  \lnm v_{\alpha, \varepsilon} ^{k} \bar{\partial}N _{\delta} u \rnm ^{2} + C _{\varepsilon, k}   \lnm u\rnm _{k} ^{2} + \delta ^{2}C _{\varepsilon}\lnm \nabla N _{\delta} u \rnm _{k} ^{2}.
  \end{align}
Moreover, we have 
  \begin{align*}
  \lnm v_{\alpha, \varepsilon} ^{k} \bar{\partial}N _{\delta} u\rnm  ^{2}
    \lesssim &\lnm D _{X _{\alpha, \varepsilon}}^{k} \bar{\partial}N _{\delta} u\rnm  ^{2} + C _{\varepsilon}\lnm \bar{v}_{\alpha, \varepsilon} \bar{\partial}N _{\delta} u\rnm _{k - 1} ^{2} + C _{\varepsilon}\lnm \bar{\partial}N _{\delta} u \rnm _{k - 1} ^{2} + 4k^{2}\varepsilon ^{2}\lnm v_{\alpha, \varepsilon} ^{k} \bar{\partial}N _{\delta} u\rnm  ^{2},
  \end{align*}
and 
  \begin{align*}
    \lnm \bar{v}_{\alpha, \varepsilon} \bar{\partial}N _{\delta} u\rnm _{k - 1} ^{2}
    &\lesssim \lnm \bar{v}_{\alpha, \varepsilon} \(\bar{\partial}N _{\delta} u + \delta \frac{\partial}{\partial \nu} N _{\delta} u \wedge \bar{\omega} _{n}\)\rnm _{k - 1} ^{2} + \delta ^{2} C _{\varepsilon}\lnm \frac{\partial}{\partial \nu} N _{\delta} u \wedge \bar{\omega} _{n}\rnm _{k} ^{2}\\
    &\lesssim C _{\varepsilon}\(\lnm \vartheta \bar{\partial}N _{\delta} u \rnm _{k-1}^{2}+ \delta ^{2}\lnm \nabla N _{\delta} u \rnm _{k} ^{2} + \lnm  \bar{\partial}N _{\delta} u \rnm _{k-1}^{2}\)\\
    &\leq  C _{\varepsilon, k} \lnm u \rnm _{k}^{2}+ \delta ^{2} C _{\varepsilon}\lnm \nabla N _{\delta} u \rnm _{k} ^{2}.
  \end{align*}
It thus follows that
  \begin{align}\label{dbar}
   \lnm v_{\alpha, \varepsilon} ^{k} \bar{\partial}N _{\delta} u \rnm  ^{2}
   \lesssim &\lnm D _{X _{\alpha, \varepsilon}} ^{k}\bar{\partial}N _{\delta} u \rnm  ^{2} + C _{\varepsilon, k} \lnm u \rnm _{k}^{2}+ \delta ^{2} C _{\varepsilon}\lnm \nabla N _{\delta} u \rnm _{k} ^{2}.
  \end{align}

   For the first terms in (\ref{nabla}), (\ref{dbar star}) and (\ref{dbar}), the argument unfolds as follows. 
  \begin{align}\label{999}
    &\delta \lnm  D _{X_{\alpha, \varepsilon}} ^{k} \nabla N _{\delta} u \rnm ^{2}\\\nonumber
    =& \delta\( \(D _{X_{\alpha, \varepsilon}} ^{\#}\)^{k} \nabla N _{\delta} u,  D _{X_{\alpha, \varepsilon}} ^{k} \nabla N _{\delta} u\) + \delta\( \(D _{X_{\alpha, \varepsilon}} ^{k} - \(D _{X_{\alpha, \varepsilon}} ^{\#}\)^{k}\) \nabla N _{\delta} u,  D _{X_{\alpha, \varepsilon}} ^{k} \nabla N _{\delta} u\)\\\nonumber
    \leq &\delta\( \(D _{X_{\alpha, \varepsilon}} ^{\#}\)^{k} \nabla N _{\delta} u,  D _{X_{\alpha, \varepsilon}} ^{k} \nabla N _{\delta} u\) + 4 \delta \lnm \(D _{X_{\alpha, \varepsilon}} ^{k} - \(D _{X_{\alpha, \varepsilon}} ^{\#}\)^{k}\) \nabla N _{\delta} u\rnm ^{2} + \frac{\delta}{4}  \lnm  D _{X_{\alpha, \varepsilon}} ^{k} \nabla N _{\delta} u\rnm ^{2}\\\nonumber
    \leq &\delta\( \(D _{X_{\alpha, \varepsilon}} ^{\#}\)^{k} \nabla N _{\delta} u,  D _{X_{\alpha, \varepsilon}} ^{k} \nabla N _{\delta} u\) + 4 \delta \lnm \(D _{X_{\alpha, \varepsilon}} ^{k} - \(D _{X_{\alpha, \varepsilon}} ^{\#}\)^{k}\) \nabla _{T} N _{\delta} u\rnm ^{2} + 4 \delta C _{\varepsilon}\lnm \bar{L} _{n} N _{\delta} u\rnm _{k}^{2} 
   +\frac{\delta}{4} \lnm  D _{X_{\alpha, \varepsilon}} ^{k} \nabla N _{\delta} u\rnm ^{2},\nonumber
  \end{align}
where
  \begin{align*}
    \delta \lnm \bar{L} _{n} N _{\delta} u\rnm _{k}^{2}
    \leq \delta C_k \(\lnm \bar{\partial}  N _{\delta} u \rnm _{k}^{2}+ \lnm \bar{\partial} ^{*} N _{\delta} u \rnm _{k} ^{2}+ \lnm N _{\delta} u \rnm _{k}\) \lesssim \delta C_k \(\lnm \bar{\partial}  N _{\delta} u \rnm _{k}^{2}+ \lnm \bar{\partial} ^{*} N _{\delta} u \rnm _{k} ^{2} \),
  \end{align*}
  and
  \begin{align*}
    &\delta\lnm \(D _{X_{\alpha, \varepsilon}} ^{k} - \(D _{X_{\alpha, \varepsilon}} ^{\#}\)^{k}\) \nabla _{T} N _{\delta} u\rnm ^{2}\\
    \lesssim &\delta\lnm \(D _{X_{\alpha, \varepsilon}} ^{k} - \(D _{X_{\alpha, \varepsilon}} ^{\#}\)^{k}\) \nabla^{(b)} _{T} N _{\delta} u\rnm ^{2} + \delta C _{\varepsilon}\lnm  N _{\delta} u\rnm _{k}^{2}\\
    \lesssim & 4k^{2} \varepsilon^{2} \delta\lnm v_{\alpha, \varepsilon} ^{k} \nabla^{(b)} _{T} N _{\delta} u \rnm ^{2}+ \delta C _{\varepsilon}\(\lnm \bar{\partial} \nabla^{(b)} _{T} N _{\delta} u \rnm _{k - 1} ^{2}+ \lnm \bar{\partial} ^{*} \nabla^{(b)} _{T} N _{\delta} u \rnm _{k - 1}^{2} + \lnm N _{\delta} u \rnm _{k}^{2}\) \\
        \leq& 4k^{2}\varepsilon^{2} \delta\lnm v_{\alpha, \varepsilon} ^{k} \nabla_{T} N _{\delta} u \rnm^{2} + \delta C _{\varepsilon}\(\lnm \bar{\partial}  N _{\delta} u \rnm _{k}^{2}+ \lnm \bar{\partial} ^{*} N _{\delta} u \rnm _{k} ^{2}+ \lnm N _{\delta} u \rnm^2 _{k}  \)\\
    \leq& 4k^{2}\varepsilon^{2} \delta\lnm v_{\alpha, \varepsilon} ^{k} \nabla_{T} N _{\delta} u \rnm^{2} + \delta C _{\varepsilon, k}\(\lnm \bar{\partial}  N _{\delta} u \rnm _{k}^{2}+ \lnm \bar{\partial} ^{*} N _{\delta} u \rnm _{k} ^{2}\) 
  \end{align*}
 by \autoref{adjoint of D}. 
Moreover, using 
  \begin{align*}
    [\nabla,  D _{X_{\alpha, \varepsilon}} ^{2k}]
     &= \nabla D _{X_{\alpha, \varepsilon}} ^{k}D _{X_{\alpha, \varepsilon}} ^{k} - D _{X_{\alpha, \varepsilon}} ^{k}\nabla D _{X_{\alpha, \varepsilon}} ^{k} + D _{X_{\alpha, \varepsilon}} ^{k}\nabla D _{X_{\alpha, \varepsilon}} ^{k} -  D _{X_{\alpha, \varepsilon}} ^{k} D _{X_{\alpha, \varepsilon}} ^{k} \nabla\\
     &= [\nabla, D _{X_{\alpha, \varepsilon}} ^{k} ]D _{X_{\alpha, \varepsilon}} ^{k} + D _{X_{\alpha, \varepsilon}} ^{k}[\nabla, D _{X_{\alpha, \varepsilon}} ^{k}] \\
     &= \left[[\nabla, D _{X_{\alpha, \varepsilon}} ^{k} ],D _{X_{\alpha, \varepsilon}} ^{k}\right] + 2D _{X_{\alpha, \varepsilon}} ^{k}[\nabla, D _{X_{\alpha, \varepsilon}} ^{k}]\\
     & = 2D _{X_{\alpha, \varepsilon}} ^{k}[\nabla, D _{X_{\alpha, \varepsilon}} ^{k}] + \sum_{j=1}^{k} \binom{k}{j} D _{X_{\alpha, \varepsilon}}^{k-j}\underbrace{[\dots[[\nabla,D _{X_{\alpha, \varepsilon}} ^{k}],D _{X_{\alpha, \varepsilon}}]\dots,D _{X_{\alpha, \varepsilon}}]}_{j+1\text{-fold}},
  \end{align*}
it follows that
    \begin{align}\label{nabla2}
    & \delta\( \(D _{X_{\alpha, \varepsilon}} ^{\#}\)^{k} \nabla N _{\delta} u,  D _{X_{\alpha, \varepsilon}} ^{k} \nabla N _{\delta} u\) \\\nonumber
    = &\delta \(\nabla N _{\delta} u,  D _{X_{\alpha, \varepsilon}} ^{2k} \nabla N _{\delta} u\) \\ \nonumber
    =  & \delta \(\nabla N _{\delta} u, \nabla  D _{X_{\alpha, \varepsilon}} ^{2k}  N _{\delta} u\) - \delta \(\nabla N _{\delta} u, [\nabla,  D _{X_{\alpha, \varepsilon}} ^{2k}]  N _{\delta} u\) \\\nonumber
    \leq  & \delta \(\nabla N _{\delta} u, \nabla  D _{X_{\alpha, \varepsilon}} ^{2k}  N _{\delta} u\) -  2\delta \(\nabla N _{\delta} u, D _{X_{\alpha, \varepsilon}} ^{k}[\nabla,  D _{X_{\alpha, \varepsilon}} ^{k}] N _{\delta} u\) + \delta C _{\varepsilon, k} \lnm \nabla N _{\delta} u \rnm _{k - 1}\lnm N _{\delta} u \rnm _{k}  \\\nonumber
    \leq & \delta \(\nabla N _{\delta} u, \nabla  D _{X_{\alpha, \varepsilon}} ^{2k}  N _{\delta} u\) + 2\delta \lnm \(D _{X_{\alpha, \varepsilon}} ^{\#}\)^{k}\nabla N _{\delta} u\rnm \lnm [\nabla,  D _{X_{\alpha, \varepsilon}} ^{k}] N _{\delta} u\rnm + \delta C _{\varepsilon, k} \lnm \nabla N _{\delta} u \rnm _{k - 1}\lnm N _{\delta} u \rnm _{k} \\\nonumber
     \leq & \delta \(\nabla N _{\delta} u, \nabla  D _{X_{\alpha, \varepsilon}} ^{2k}  N _{\delta} u\) + \text {s.c.} \delta C _{\varepsilon}  \lnm \nabla N _{\delta} u\rnm _{k} ^{2} +  \text{l.c.} \delta C _{\varepsilon, k} \(\lnm \bar{\partial}N _{\delta} u\rnm _{k} ^{2} +\lnm \bar{\partial} ^{*}N _{\delta} u\rnm _{k} ^{2}\),
  \end{align}
where the first equality follows from   \autoref{D sharp}    and the last inequality follows from 
  \begin{align*}
     &2\delta \lnm \(D _{X_{\alpha, \varepsilon}} ^{\#}\)^{k}\nabla N _{\delta} u\rnm \lnm [\nabla,  D _{X_{\alpha, \varepsilon}} ^{k}] N _{\delta} u\rnm + \delta C _{\varepsilon, k} \lnm \nabla N _{\delta} u \rnm _{k - 1}\lnm N _{\delta} u \rnm _{k}\\
     \leq & \text{s.c.} \delta C _{\varepsilon, k}  \lnm \nabla N _{\delta} u\rnm _{k} ^{2} + \text{l.c.}\delta C _{\varepsilon, k} \lnm N _{\delta} u\rnm _{k} ^{2}\\
     \leq & \text{s.c.} \delta C _{\varepsilon, k} \lnm \nabla N _{\delta} u\rnm _{k} ^{2} + \text{l.c.} \delta C _{\varepsilon, k}  \(\lnm \bar{\partial}N _{\delta} u\rnm _{k} ^{2} +\lnm \bar{\partial} ^{*}N _{\delta} u\rnm _{k} ^{2}\).
  \end{align*}
It follows from  (\ref{999}) and (\ref{nabla2}) that 
  \begin{align}\label{nabla4}
    &\delta \lnm  D _{X_{\alpha, \varepsilon}} ^{k} \nabla N _{\delta} u \rnm ^{2}\\\nonumber
    \lesssim& \delta \(\nabla N _{\delta} u, \nabla  D _{X_{\alpha, \varepsilon}} ^{2k}  N _{\delta} u\) +4k ^{2}\varepsilon^{2} \delta\lnm v_{\alpha, \varepsilon} ^{k} \nabla_{T} N _{\delta} u \rnm^{2}+ \delta C _{\varepsilon, k}\(\lnm \bar{\partial}N _{\delta} u\rnm _{k} ^{2} +\lnm \bar{\partial} ^{*}N _{\delta} u\rnm _{k} ^{2}\)  + \text{s.c.} \delta C _{\varepsilon}  \lnm \nabla N _{\delta} u\rnm _{k} ^{2} .\nonumber
  \end{align}
It follows from the similar argument that 
   \begin{align*}
    \lnm D_{X_{\alpha, \varepsilon}} ^{k} \bar{\partial} ^{*} N _{\delta} u \rnm ^{2}
    \leq &\( \(D _{X_{\alpha, \varepsilon}} ^{\#}\)^{k} \bar{\partial} ^{*} N _{\delta} u,  D _{X_{\alpha, \varepsilon}} ^{k} \bar{\partial} ^{*} N _{\delta} u\) + 4 \lnm \(D _{X_{\alpha, \varepsilon}} ^{k} - \(D _{X_{\alpha, \varepsilon}} ^{\#}\)^{k}\) \bar{\partial} ^{*} N _{\delta} u\rnm ^{2} + \frac{1}{4} \lnm  D _{X_{\alpha, \varepsilon}} ^{k} \bar{\partial} ^{*} N _{\delta} u\rnm ^{2}\\
  \leq & \( \(D _{X_{\alpha, \varepsilon}} ^{\#}\)^{k} \bar{\partial} ^{*} N _{\delta} u,  D _{X_{\alpha, \varepsilon}} ^{k} \bar{\partial} ^{*} N _{\delta} u\) + 32k^{2} \varepsilon^{2} \lnm v_{\alpha, \varepsilon} ^{k} \bar{\partial} ^{*} N _{\delta} u \rnm ^{2}+ C _{\varepsilon, k}\lnm u \rnm _{k} ^{2}  + \frac{1}{4} \lnm  D _{X_{\alpha, \varepsilon}} ^{k} \bar{\partial} ^{*} N _{\delta} u\rnm ^{2},
  \end{align*}
  where the second inequality follows from  \autoref{adjoint of D} and (\ref{dbar dbar star2}).
Also, by the similar argument, we have
  \begin{align*}
    &\( \(D _{X_{\alpha, \varepsilon}} ^{\#}\)^{k} \bar{\partial} ^{*} N _{\delta} u,  D _{X_{\alpha, \varepsilon}} ^{k} \bar{\partial} ^{*} N _{\delta} u\)\\
    = &\(\bar{\partial} ^{*} N _{\delta} u,  D _{X_{\alpha, \varepsilon}} ^{2k} \bar{\partial} ^{*}N _{\delta} u\) \\ \nonumber
    =  & \(\bar{\partial} ^{*} N _{\delta} u, \bar{\partial} ^{*}  D _{X_{\alpha, \varepsilon}} ^{2k}  N _{\delta} u\) - \(\bar{\partial} ^{*} N _{\delta} u, [\bar{\partial} ^{*},  D _{X_{\alpha, \varepsilon}} ^{2k}]  N _{\delta} u\) \\\nonumber
    \leq  & \(\bar{\partial} ^{*} N _{\delta} u, \bar{\partial} ^{*}  D _{X_{\alpha, \varepsilon}} ^{2k}  N _{\delta} u\) -  2 \(\bar{\partial} ^{*} N _{\delta} u, D _{X_{\alpha, \varepsilon}} ^{k}[\bar{\partial} ^{*},  D _{X_{\alpha, \varepsilon}} ^{k}] N _{\delta} u\) + C _{\varepsilon, k} \lnm \bar{\partial} ^{*} N _{\delta} u \rnm _{k - 1}\lnm N _{\delta} u \rnm _{k}  \\\nonumber
    \leq & \(\bar{\partial} ^{*} N _{\delta} u, \bar{\partial} ^{*}  D _{X_{\alpha, \varepsilon}} ^{2k}  N _{\delta} u\) + 2\lnm \(D _{X_{\alpha, \varepsilon}} ^{\#}\)^{k}\bar{\partial} ^{*} N _{\delta} u\rnm \lnm [\bar{\partial} ^{*},  D _{X_{\alpha, \varepsilon}} ^{k}] N _{\delta} u\rnm + \text{l.c.} C _{\varepsilon, k} \lnm \bar{\partial} ^{*} N _{\delta} u \rnm _{k - 1} ^{2} + \text{s.c.} C _{\varepsilon, k}\lnm N _{\delta} u \rnm _{k} ^{2}.
  \end{align*}
  For the second term, by the similar argument, we have
  \begin{align*}
    &\lnm \(D _{X_{\alpha, \varepsilon}} ^{\#}\)^{k}\bar{\partial} ^{*} N _{\delta} u\rnm \lnm [\bar{\partial} ^{*},  D _{X_{\alpha, \varepsilon}} ^{k}] N _{\delta} u\rnm\\
    \leq & \frac{1}{4} \lnm \(D _{X_{\alpha, \varepsilon}} ^{\#}\)^{k}\bar{\partial} ^{*} N _{\delta} u\rnm ^{2} + 4 \lnm [\bar{\partial} ^{*},  D _{X_{\alpha, \varepsilon}} ^{k}] N _{\delta} u\rnm ^{2}\\
     \leq &\frac{1}{2} \lnm D _{X_{\alpha, \varepsilon}}^{k}\bar{\partial} ^{*} N _{\delta} u\rnm ^{2} + \frac{1}{2} \lnm \(\(D _{X_{\alpha, \varepsilon}} ^{\#}\)^{k} - D _{X_{\alpha, \varepsilon}}^{k}\)\bar{\partial} ^{*} N _{\delta} u\rnm ^{2} + 4 \lnm [\bar{\partial} ^{*},  D _{X_{\alpha, \varepsilon}} ^{k}] N _{\delta} u\rnm ^{2}\\
     \leq & \frac{1}{2} \lnm D _{X_{\alpha, \varepsilon}}^{k}\bar{\partial} ^{*} N _{\delta} u\rnm ^{2} + 4k^{2}\varepsilon^{2} \lnm v_{\alpha, \varepsilon}^{k}\bar{\partial} ^{*} N _{\delta} u\rnm ^{2} + C _{\varepsilon} \(\lnm\bar{\partial} \bar{\partial} ^{*} N _{\delta} u\rnm _{k -1} ^{2} + \lnm \bar{\partial} ^{*} N _{\delta} u\rnm _{k - 1} ^{2}\) 
     + 4 \lnm [\bar{\partial} ^{*},  D _{X_{\alpha, \varepsilon}} ^{k}] N _{\delta} u\rnm ^{2}.
  \end{align*}
 Combining  the above inequalities and applying the induction, we have
  \begin{align}\label{dbar star4}
    \lnm D_{X_{\alpha, \varepsilon}} ^{k} \bar{\partial} ^{*} N _{\delta} u \rnm ^{2}  &\lesssim \(\bar{\partial} ^{*} N _{\delta} u, \bar{\partial} ^{*} D_{X_{\alpha, \varepsilon}} ^{2k}  N _{\delta} u\)  + \text{s.c.} C _{\varepsilon, k} \(\lnm\bar{\partial} N _{\delta} u\rnm _{k } ^{2} + \lnm \bar{\partial} ^{*} N _{\delta} u\rnm _{k } ^{2}\)  \\\nonumber
    &~~~ +  \lnm [\bar{\partial} ^{*},  D _{X_{\alpha, \varepsilon}} ^{k}] N _{\delta} u\rnm ^{2} +4k^{2} \varepsilon^{2} \lnm v_{\alpha, \varepsilon} ^{k} \bar{\partial} ^{*} N _{\delta} u \rnm ^{2} +  C_{\varepsilon, k}\lnm u \rnm _{k} ^{2} .\nonumber
  \end{align}
  Similarly, we have 
  \begin{align*}
  &~~~~~~ \lnm D_{X_{\alpha, \varepsilon}} ^{k} \bar{\partial} N _{\delta} u \rnm ^{2} \\
  &  \leq \( \(D _{X_{\alpha, \varepsilon}} ^{\#}\)^{k} \bar{\partial} N _{\delta} u,  D _{X_{\alpha, \varepsilon}} ^{k} \bar{\partial} N _{\delta} u\) + 4 \lnm \(D _{X_{\alpha, \varepsilon}} ^{k} - \(D _{X_{\alpha, \varepsilon}} ^{\#}\)^{k}\) \bar{\partial} N _{\delta} u\rnm ^{2} + \frac{1}{4} \lnm  D _{X_{\alpha, \varepsilon}} ^{k} \bar{\partial} N _{\delta} u\rnm ^{2} \\
& \leq \( \(D _{X_{\alpha, \varepsilon}} ^{\#}\)^{k} \bar{\partial} N _{\delta} u,  D _{X_{\alpha, \varepsilon}} ^{k} \bar{\partial} N _{\delta} u\) + 8 \lnm \(D _{X_{\alpha, \varepsilon}} ^{k} - \(D _{X_{\alpha, \varepsilon}} ^{\#}\) ^{k}\) \(\bar{\partial} N _{\delta} u + \delta \frac{\partial}{\partial \nu} N _{\delta} u \wedge \bar{\omega} _{n}\)\rnm ^{2}  \\
&~~~ + 8 \lnm \(D _{X_{\alpha, \varepsilon}} ^{k} - \(D _{X_{\alpha, \varepsilon}}^{\#}\) ^{k}\) \(\delta \frac{\partial}{\partial \nu} N _{\delta} u \wedge \bar{\omega} _{n}\)\rnm ^{2} +  \frac{1}{4} \lnm  D _{X_{\alpha, \varepsilon}} ^{k} \bar{\partial} N _{\delta} u\rnm ^{2} \\
&\leq  \( \(D _{X_{\alpha, \varepsilon}} ^{\#}\)^{k} \bar{\partial} N _{\delta} u,  D _{X_{\alpha, \varepsilon}} ^{k} \bar{\partial} N _{\delta} u\)  + 120 k^{2} \varepsilon^{2} \lnm v_{\alpha, \varepsilon} ^{k} \bar{\partial} N _{\delta} u \rnm ^{2}+ C_{\varepsilon, k} \lnm u \rnm _{k} ^{2} + \delta ^{2}C _{\varepsilon}  \lnm \nabla N _{\delta} u \rnm _{k}^{2}+ \frac{1}{4} \lnm  D _{X_{\alpha, \varepsilon}} ^{k} \bar{\partial} N _{\delta} u\rnm ^{2},
  \end{align*}
where the last inequality follows from  \autoref{adjoint of D}, (\ref{dbar dbar star2}) and the induction. 
It follows from the similar argument that
  \begin{align*}
    &\( \(D _{X_{\alpha, \varepsilon}} ^{\#}\)^{k} \bar{\partial} N _{\delta} u,  D _{X_{\alpha, \varepsilon}} ^{k} \bar{\partial} N _{\delta} u\)\\
    \leq & \(\bar{\partial} N _{\delta} u, \bar{\partial}  D _{X_{\alpha, \varepsilon}} ^{2k}  N _{\delta} u\) + 2\lnm \(D _{X_{\alpha, \varepsilon}} ^{\#}\)^{k}\bar{\partial} N _{\delta} u\rnm \lnm [\bar{\partial},  D _{X_{\alpha, \varepsilon}} ^{k}] N _{\delta} u\rnm + \text{l.c.} C _{\varepsilon, k} \lnm \bar{\partial} N _{\delta} u \rnm _{k - 1} ^{2} + \text{s.c.} C _{\varepsilon, k}\lnm N _{\delta} u \rnm _{k} ^{2}\\
   \leq & \(\bar{\partial} N _{\delta} u, \bar{\partial}  D _{X_{\alpha, \varepsilon}} ^{2k}  N _{\delta} u\) + \frac{1}{3} \lnm D _{X_{\alpha, \varepsilon}}^{k}\bar{\partial} N _{\delta} u\rnm ^{2} + 10 \lnm \(\(D _{X_{\alpha, \varepsilon}} ^{\#}\)^{k} - D _{X_{\alpha, \varepsilon}}^{k} \)\bar{\partial} N _{\delta} u\rnm ^{2} + 32 \lnm [\bar{\partial},  D _{X_{\alpha, \varepsilon}} ^{k}] N _{\delta} u\rnm ^{2} \\
  & +C _{\varepsilon, k} \lnm  u \rnm _{k} ^{2} + \text{s.c.} C _{\varepsilon, k}\lnm N _{\delta} u \rnm _{k} ^{2}\\
    \leq & \(\bar{\partial} N _{\delta} u, \bar{\partial}  D _{X_{\alpha, \varepsilon}} ^{2k}  N _{\delta} u\) + \frac{1}{3} \lnm D _{X_{\alpha, \varepsilon}}^{k}\bar{\partial} N _{\delta} u\rnm ^{2} + 100k^{2}\varepsilon^{2} \lnm v_{\alpha, \varepsilon}^{k}\bar{\partial} N _{\delta} u\rnm ^{2} + \delta ^{2} C _{\varepsilon}\lnm \nabla N _{\delta} u\rnm_k ^{2} +C _{\varepsilon, k} \lnm  u \rnm _{k} ^{2}\\    
   &  + 32 \lnm [\bar{\partial},  D _{X_{\alpha, \varepsilon}} ^{k}] N _{\delta} u\rnm ^{2}  + \text{s.c.} C _{\varepsilon, k} \(\lnm\bar{\partial} N _{\delta} u\rnm _{k } ^{2} + \lnm \bar{\partial} ^{*} N _{\delta} u\rnm _{k } ^{2}\)   ,
  \end{align*}
   where the last inequality follows from 
    \begin{align*}
    &\lnm \(\(D _{X_{\alpha, \varepsilon}} ^{\#}\)^{k} - D _{X_{\alpha, \varepsilon}}^{k} \)\bar{\partial} N _{\delta} u\rnm ^{2}\\
     \leq &\frac{6}{5} \lnm \(\(D _{X_{\alpha, \varepsilon}} ^{\#}\)^{k} - D _{X_{\alpha, \varepsilon}}^{k} \)\(\bar{\partial} N _{\delta} u + \delta \frac{\partial}{\partial \nu} N _{\delta} u \wedge \bar{\omega} _{n}\)\rnm ^{2} +10 \lnm \(\(D _{X_{\alpha, \varepsilon}} ^{\#}\)^{k} - D _{X_{\alpha, \varepsilon}}^{k} \)\(\delta \frac{\partial}{\partial \nu} N _{\delta} u \wedge \bar{\omega} _{n}\)\rnm ^{2}\\
     \leq & 10k^{2}\varepsilon^{2} \lnm v_{\alpha, \varepsilon}^{k}\bar{\partial} N _{\delta} u\rnm ^{2} + C _{\varepsilon, k} \lnm u \rnm_k^{2}  + \delta ^{2} C _{\varepsilon}\lnm \nabla N _{\delta} u\rnm_k ^{2}.
  \end{align*}
  Thus, combining the above inequalities, 
  we have
  \begin{align}\label{dbar4}
    \lnm D_{X_{\alpha, \varepsilon}} ^{k} \bar{\partial} N _{\delta} u \rnm ^{2}
    \lesssim &\(\bar{\partial} N _{\delta} u, \bar{\partial} D_{X_{\alpha, \varepsilon}} ^{2k}  N _{\delta} u\) +  \lnm [\bar{\partial},  D _{X_{\alpha, \varepsilon}} ^{k}] N _{\delta} u\rnm ^{2} + k^{2} \varepsilon^{2} \lnm v_{\alpha, \varepsilon} ^{k} \bar{\partial} N _{\delta} u \rnm ^{2} +C_{\varepsilon, k}\lnm u \rnm _{k} ^{2} \\
    &+ \text{s.c.} C _{\varepsilon, k}\(\lnm\bar{\partial} N _{\delta} u\rnm _{k } ^{2} + \lnm \bar{\partial} ^{*} N _{\delta} u\rnm _{k } ^{2}\)  + \delta ^{2} C _{\varepsilon}\lnm \nabla N _{\delta} u\rnm ^{2}_k.\nonumber
  \end{align}
 Since the first terms in (\ref{nabla4}), (\ref{dbar star4}), and (\ref{dbar4}) yield
  \begin{align}\label{Q delta}
    &\(\bar{\partial} N _{\delta} u, \bar{\partial} D _{X_{\alpha, \varepsilon}} ^{2k}  N _{\delta} u\) + \(\bar{\partial} ^{*} N _{\delta} u, \bar{\partial} ^{*} D _{X_{\alpha, \varepsilon}} ^{2k}  N _{\delta} u\) + \delta \(\nabla N _{\delta} u,  \nabla D _{X_{\alpha, \varepsilon}}^{2k} N _{\delta} u\)\\\nonumber
    =& Q _{\delta} \(N _{\delta} u, D _{X_{\alpha, \varepsilon}}^{2k}  N _{\delta} u\)\nonumber
    = \(\Box _{\delta}N _{\delta} u,D _{X_{\alpha, \varepsilon}} ^{2k}  N _{\delta} u\)\nonumber
    =\(u,D _{X_{\alpha, \varepsilon}} ^{2k}  N _{\delta} u\)  \\\nonumber
    \leq & \(\(D _{X_{\alpha, \varepsilon}}^{\#}\)^{k} u,D _{X_{\alpha, \varepsilon}} ^{k}  N _{\delta} u\) \leq C _{\varepsilon} \lnm u \rnm _{k}\lnm N _{\delta}u \rnm _{k}  \\\nonumber
    \leq &      \text{s.c.}  C _{\varepsilon} \lnm N _{\delta}u \rnm _{k} ^{2} + \text{l.c.} C _{\varepsilon} \lnm u \rnm _{k} ^{2} \leq   \text{s.c.}  C _{\varepsilon} \(\lnm\bar{\partial}  N _{\delta} u\rnm _{k } ^{2} + \lnm \bar{\partial} ^{*} N _{\delta} u\rnm _{k } ^{2}\) + \text{l.c.} C _{\varepsilon} \lnm u \rnm _{k} ^{2}, \nonumber
  \end{align}
it suffices  to estimate $\lnm[\bar{\partial} ^{*}, D _{X_{\alpha, \varepsilon}} ^{k}]N _{\delta} u\rnm ^{2}$ and $\lnm[\bar{\partial}, D _{X_{\alpha, \varepsilon}}^{k}]   N _{\delta} u\rnm ^{2}$. 
 By \autoref{lie bracket basic estimate} and the induction, we have
  \begin{align}\label{dbar star3}
     &\lnm\left[\bar{\partial} ^{*}, D _{X_{\alpha, \varepsilon}} ^{k}\right] N _{\delta} u\rnm ^{2}\\\nonumber
    \leq &\varepsilon^2 C_k \(\lnm \bar{\partial} D _{X_{\alpha, \varepsilon}} ^{k} N _{\delta}u \rnm ^2 + \lnm \bar{\partial}^* D _{X_{\alpha, \varepsilon}} ^{k}N _{\delta}u \rnm ^2 \) + C _{\varepsilon, k} \(\lnm \bar{\partial}  N _{\delta}u \rnm  _{k - 1}^2 + \lnm \bar{\partial}^* N _{\delta}u \rnm _ {k - 1}^2 +\lnm N _{\delta}u \rnm  _{k - 1}^2 \)\\\nonumber
     \leq & \varepsilon^2 C_k \(\lnm  D _{X_{\alpha, \varepsilon}} ^{k} \bar{\partial} N _{\delta}u \rnm ^2 + \lnm  D _{X_{\alpha, \varepsilon}} ^{k}\bar{\partial}^*N _{\delta}u \rnm ^2  + \lnm \left[\bar{\partial}, D _{X_{\alpha, \varepsilon}} ^{k}\right] N _{\delta}u \rnm ^2 + \lnm \left[\bar{\partial}^* ,D _{X_{\alpha, \varepsilon}} ^{k}\right]N _{\delta}u \rnm ^2\) + C _{\varepsilon, k} \lnm u \rnm _{k - 1}^2.
  \end{align}
%
On the other hand, it follows from (\ref{powers of lie bracket}) that
  \begin{align}\label{555}
    \lnm\left[\bar{\partial}, D _{X_{\alpha, \varepsilon}} ^{k}\right]  N _{\delta} u\rnm ^{2}
    \leq 4k^2 \lnm\left[\bar{\partial}, D _{X_{\alpha, \varepsilon}}\right]D _{X_{\alpha, \varepsilon}} ^{k - 1}  N _{\delta} u\rnm ^{2} + C _{\varepsilon, k}\lnm N _{\delta} u\rnm _{k - 1} ^{2},
  \end{align}
with
  \begin{align}\label{wt7}
    \left[\bar{\partial}, D _{X_{\alpha, \varepsilon}} \right] 
    = \left[\bar{\partial},v_{\alpha, \varepsilon}\right] +  \left[\bar{\partial},v_{\alpha, \varepsilon}^{\#} - v_{\alpha, \varepsilon}\right] + \left[\bar{\partial}, D _{v_{\alpha, \varepsilon}^{\#}} - v_{\alpha, \varepsilon}^{\#}\right] -  \left[\bar{\partial}, D _{\bar{v}_{\alpha, \varepsilon}}\right].
  \end{align}
  By the definition,  
  the terms $[\bar{\partial}, D _{v_{\alpha, \varepsilon}^{\#}} - v_{\alpha, \varepsilon}^{\#}]$ and $[\bar{\partial}, D _{\bar{v}_{\alpha, \varepsilon}}]$ are first order differential operators of $\(0, 1\)$-type, 
  and it thus follows from the benign estimate and the induction that 
  \begin{align}\label{wt5}
    &\lnm\left[\bar{\partial}, D _{v_{\alpha, \varepsilon} ^{\#}} - v_{\alpha, \varepsilon} ^{\#}\right] D _{X_{\alpha, \varepsilon}} ^{k - 1}  N _{\delta} u\rnm ^{2} + \lnm\left[\bar{\partial}, D _{\bar{v}_{\alpha, \varepsilon}}\right] D _{X_{\alpha, \varepsilon}} ^{k - 1} N _{\delta} u\rnm ^{2} \\\nonumber
    \lesssim&  C_{\varepsilon} \(\lnm \bar{\partial} D _{X_{\alpha, \varepsilon}} ^{k - 1}  N _{\delta} u\rnm ^{2} + \lnm \bar{\partial}^{*} D _{X_{\alpha, \varepsilon}} ^{k - 1}  N _{\delta} u\rnm ^{2} + \lnm  D _{X_{\alpha, \varepsilon}} ^{k - 1} N _{\delta} u\rnm ^{2}\)\\\nonumber
    \leq& C _{\varepsilon}\(\lnm D _{X_{\alpha, \varepsilon}} ^{k - 1} \bar{\partial}  N _{\delta} u\rnm ^{2} + \lnm D _{X_{\alpha, \varepsilon}} ^{k - 1}  \bar{\partial}^{*} N _{\delta} u\rnm ^{2} +\lnm \left[\bar{\partial}, D _{X_{\alpha, \varepsilon}} ^{k - 1} \right]  N _{\delta} u\rnm ^{2} + \lnm \left[\bar{\partial}^{*}, D _{X_{\alpha, \varepsilon}} ^{k - 1} \right]  N _{\delta} u\rnm ^{2}+ \lnm  N _{\delta} u\rnm _{k - 1} ^{2}\)\\\nonumber
    \leq & C _{\varepsilon}\(\lnm \bar{\partial} N _{\delta} u\rnm _{k - 1} ^{2} + \lnm \bar{\partial} ^{* }N _{\delta} u\rnm _{k - 1} ^{2} + \lnm  N _{\delta} u\rnm _{k - 1} ^{2}\) \\\nonumber
    \leq & C _{\varepsilon, k} \lnm u\rnm _{k - 1} ^{2}.
  \end{align}
  It follows from 
  the condition (i) in \autoref{property p} that
  \begin{align}\label{wt8}
    &\lnm \left[\bar{\partial},v_{\alpha, \varepsilon}^{\#} - v_{\alpha, \varepsilon}\right]D _{X_{\alpha, \varepsilon}} ^{k - 1}  N _{\delta} u\rnm ^{2}\\\nonumber
    = &\lnm \left[\bar{\partial},\(1 - e ^{-2i\text{arg}  d \rho \(v_{\alpha, \varepsilon}\)}\)v_{\alpha, \varepsilon}\right]D _{X_{\alpha, \varepsilon}} ^{k - 1}  N _{\delta} u\rnm ^{2}\\\nonumber
    \leq& 2 \lnm \(1 - e ^{-2i\text{arg}  d \rho \(v_{\alpha, \varepsilon}\)}\)\left[\bar{\partial}, v_{\alpha, \varepsilon}\right]D _{X_{\alpha, \varepsilon}} ^{k - 1}  N _{\delta} u\rnm ^{2} + 2 \lnm \bar{\partial}\(1 - e ^{-2i\text{arg}  d \rho \(v_{\alpha, \varepsilon}\)}\)  \wedge v_{\alpha, \varepsilon}D _{X_{\alpha, \varepsilon}} ^{k - 1}  N _{\delta} u\rnm ^{2}\\\nonumber
     \lesssim & \varepsilon^{2} \lnm \left[\bar{\partial}, v_{\alpha, \varepsilon}\right]D _{X_{\alpha, \varepsilon}} ^{k - 1}  N _{\delta} u\rnm ^{2} + \varepsilon^{2}\lnm v_{\alpha, \varepsilon}D _{X_{\alpha, \varepsilon}} ^{k - 1}  N _{\delta} u\rnm ^{2}.
  \end{align} 
It follows from  (\ref{duan123}),   the standard Morrey-Kohn-H\"{o}rmander estimate,  the benign estimates and the induction in  (\ref{induction}) that 
  \begin{align}\label{666}
    &\lnm v_{\alpha, \varepsilon}D _{X_{\alpha, \varepsilon}} ^{k - 1}  N _{\delta} u\rnm ^{2}\\\nonumber
    \lesssim & \lnm D _{X_{\alpha, \varepsilon}} ^{k}  N _{\delta} u\rnm ^{2} + \lnm D _{\bar{v}_{\alpha, \varepsilon}} D _{X_{\alpha, \varepsilon}} ^{k - 1}  N _{\delta} u\rnm ^{2}+ C _{\varepsilon}\lnm  N _{\delta}  u\rnm _{k - 1} ^{2} \\\nonumber
    \lesssim &\lnm \bar{\partial} D _{X_{\alpha, \varepsilon}} ^{k}  N _{\delta} u\rnm ^{2} + \lnm \bar{\partial}^{*} D _{X_{\alpha, \varepsilon}} ^{k}  N _{\delta} u\rnm ^{2} + \lnm \bar{\partial} D _{X_{\alpha, \varepsilon}} ^{k - 1}  N _{\delta} u\rnm ^{2} + \lnm \bar{\partial} ^{*}D _{X_{\alpha, \varepsilon}} ^{k - 1}  N _{\delta} u\rnm ^{2} +C _{\varepsilon}\lnm N _{\delta}  u\rnm _{k - 1} ^{2}\\\nonumber
    \lesssim &\lnm  D _{X_{\alpha, \varepsilon}} ^{k} \bar{\partial} N _{\delta} u\rnm ^{2} + \lnm D _{X_{\alpha, \varepsilon}} ^{k} \bar{\partial}^{*}   N _{\delta} u\rnm ^{2} + \lnm \left[\bar{\partial}, D _{X_{\alpha, \varepsilon}} ^{k} \right] N _{\delta} u\rnm ^{2} + \lnm \left[\bar{\partial}^{*}, D _{X_{\alpha, \varepsilon}} ^{k} \right] N _{\delta} u\rnm ^{2} + C _{\varepsilon}\lnm u\rnm _{k - 1} ^{2}.
  \end{align} 
%
  %
 %
In addition, it follows from \autoref{dbar lie bracket} and the induction in (\ref{induction}) that
  \begin{align}\label{888888}
    &~~~~~~ \lnm \left[\bar{\partial}, v_{\alpha, \varepsilon}\right]D _{X_{\alpha, \varepsilon}} ^{k - 1}  N _{\delta} u\rnm ^{2} \\\nonumber 
       \lesssim & \varepsilon^2 C_k \(\lnm \bar{\partial} D _{X_{\alpha, \varepsilon}} ^{k} N _{\delta} u \rnm ^{2} +\lnm \bar{\partial} ^{*} D _{X_{\alpha, \varepsilon}} ^{k} N _{\delta} u   \rnm ^{2} \) + C _{\varepsilon, k} \left( \lnm  \bar{\partial} ^{*}  N _{\delta} u  \rnm_{k-1}^{2} +  \lnm  \bar{\partial}  N _{\delta} u  \rnm_{k-1}^{2}+ \lnm N _{\delta} u \rnm_{k-1}^2+ {\rm s.c.} \lnm N _{\delta} u \rnm_{k}^2  \right) \\\nonumber
          \lesssim &  \varepsilon^2 C_k \(\lnm  D _{X_{\alpha, \varepsilon}} ^{k} \bar{\partial} N _{\delta} u \rnm ^{2} +\lnm  D _{X_{\alpha, \varepsilon}} ^{k} \bar{\partial} ^{*} N _{\delta} u   \rnm ^{2} +  \lnm\left[\bar{\partial}^{*}, D _{X_{\alpha, \varepsilon}} ^{k}\right]  N _{\delta} u\rnm ^{2}+ \lnm\left[\bar{\partial}, D _{X_{\alpha, \varepsilon}} ^{k}\right]  N _{\delta} u\rnm ^{2} \) \\\nonumber 
          &+  C _{\varepsilon, k}\left( \lnm  u \rnm_{k-1}^2 + {\rm s.c.} \lnm N _{\delta} u \rnm_{k}^2 \right) . \\\nonumber
  \end{align} 
Combining (\ref{555}) to (\ref{888888}), we have
  \begin{align}\label{dbar 3}
    \lnm\left[\bar{\partial}, D _{X_{\alpha, \varepsilon}} ^{k}\right]  N _{\delta} u\rnm ^{2}
   & \lesssim  C_{\varepsilon, k} \lnm u \rnm _{k - 1} ^{2} + \text{s.c.} C _{\varepsilon, k}\lnm N _{\delta} u \rnm^2 _{k} \\\nonumber
   &+
    C_k \varepsilon^{2}\(\lnm  D _{X_{\alpha, \varepsilon}} ^{k} \bar{\partial} N _{\delta} u\rnm ^{2} + \lnm D _{X_{\alpha, \varepsilon}} ^{k} \bar{\partial}^{*}   N _{\delta} u\rnm ^{2} +  \lnm \left[\bar{\partial}^{*}, D _{X_{\alpha, \varepsilon}} ^{k} \right] N _{\delta} u\rnm ^{2} +  \lnm \left[\bar\partial, D _{X_{\alpha, \varepsilon}} ^{k} \right] N _{\delta} u\rnm ^{2}\).
  \end{align}
  Combining (\ref{dbar star3}) and (\ref{dbar 3}), we obtain
  \begin{align}\label{lie bracket combine}
    &\lnm\left[\bar{\partial}^{*}, D _{X_{\alpha, \varepsilon}} ^{k}\right]  N _{\delta} u\rnm ^{2}+ \lnm\left[\bar{\partial}, D _{X_{\alpha, \varepsilon}} ^{k}\right]  N _{\delta} u\rnm ^{2}\\\nonumber
    \lesssim & C_k \varepsilon^2  \(\lnm  D _{X_{\alpha, \varepsilon}} ^{k} \bar{\partial} N _{\delta} u\rnm ^{2} + \lnm D _{X_{\alpha, \varepsilon}} ^{k} \bar{\partial}^{*}   N _{\delta} u\rnm ^{2}\) + C_{\varepsilon, k} \lnm u \rnm _{k - 1} ^{2} + \text{s.c.} C _{\varepsilon}\lnm N _{\delta} u \rnm^2 _{k}\nonumber.
  \end{align}
 Combining (\ref{nabla4}), (\ref{dbar star4}), (\ref{dbar4}), (\ref{Q delta}) and (\ref{lie bracket combine}), we have
  \begin{align}\label{all DX}
    &\delta \lnm  D _{X_{\alpha, \varepsilon}} ^{k} \nabla N _{\delta} u \rnm ^{2} + \lnm D_{X_{\alpha, \varepsilon}} ^{k} \bar{\partial} ^{*} N _{\delta} u \rnm ^{2} + \lnm D_{X_{\alpha, \varepsilon}} ^{k} \bar{\partial} N _{\delta} u \rnm ^{2}\\\nonumber
    \lesssim &C_k \varepsilon^{2} \(\delta\lnm v_{\alpha, \varepsilon} ^{k} \nabla_{T} N _{\delta} u \rnm^{2} + \lnm v_{\alpha, \varepsilon} ^{k} \bar{\partial} ^{*} N _{\delta} u \rnm^{2} + \lnm v_{\alpha, \varepsilon} ^{k} \bar{\partial} N _{\delta} u \rnm^{2}\)+ (\text{s.c.}+\delta) C _{\varepsilon, k}\(\lnm \bar{\partial}N _{\delta} u\rnm _{k} ^{2} +\lnm \bar{\partial} ^{*}N _{\delta} u\rnm _{k} ^{2}\) \\\nonumber
    &+ (\text{s.c.}+ \delta)  C _{\varepsilon, k}\delta\lnm \nabla N _{\delta} u\rnm_k ^{2} + C _{\varepsilon, k} \lnm u\rnm _{k} ^{2}.\nonumber
  \end{align}
  Combining (\ref{nabla}), (\ref{dbar star}), (\ref{dbar}) and (\ref{all DX}), we obtain:
   \begin{align}\label{duan1}
   & \delta \lnm  v^k_{\alpha, \varepsilon} \nabla_T N _{\delta} u \rnm ^{2} + \lnm v^k_{\alpha, \varepsilon} \bar{\partial} ^{*} N _{\delta} u \rnm ^{2} + \lnm v^k_{\alpha, \varepsilon} \bar{\partial} N _{\delta} u \rnm ^{2} \\\nonumber
    \lesssim & (\text{s.c.}+\delta) C _{\varepsilon, k}\(\lnm \bar{\partial}N _{\delta} u\rnm _{k} ^{2} +\lnm \bar{\partial} ^{*}N _{\delta} u\rnm _{k} ^{2}\) + (\text{s.c.}+ \delta)  C _{\varepsilon, k}\delta\lnm \nabla N _{\delta} u\rnm_k ^{2} + C _{\varepsilon, k} \lnm u\rnm _{k} ^{2}.\nonumber
  \end{align}
  Finally, combining  (\ref{wt2}), 
  (\ref{wt3}), 
  (\ref{wt4}), 
  (\ref{duan1}) and letting $\delta C_{\varepsilon, k} \ll 1$, $\text{s.c.}  C_{\varepsilon, k} \ll 1$, we have
  \begin{align*}
    \lnm \bar{\partial} N _{\delta} u \rnm _{k} ^{2} + \lnm \bar{\partial} ^{*} N _{\delta}u \rnm _{k}^{2} + \delta\lnm \nabla N _{\delta} u \rnm _{k}^{2}
    \leq C _{\varepsilon, k} \lnm u \rnm _{k} ^{2}, 
  \end{align*}
  which is (\ref{induction})  for $\beta = k$ and finishes the induction.
\end{proof}

\subsection{Global regularity}
The genuine estimates follow from \autoref{priori estiamte} by the standard approximation argument as $\delta \rightarrow 0^+$ (cf. \cite{KN65, S08}). 

\begin{thm}\label{genuine estimate}
  Let $n > 2$ and $\Om \subset \bbC ^{n}$ be a smooth bounded pseudoconvex domain 
 that possesses a family of transverse vector fields satisfying property (${\widetilde P}_{n-1}^{\#}$). 
  Then the $\bar{\partial}$-Neumann operator $N _{n - 1}$ is continuous on $W _{\(0, n - 1\)}  ^{k} \(\Om\)$ for any $k \geq 0$.
\end{thm}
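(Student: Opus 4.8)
The plan is to upgrade the a priori Sobolev estimate of \autoref{priori estiamte} to a genuine estimate for $N_{n-1}$ by Kohn's elliptic-regularization device, realized here through the strictly pseudoconvex exhaustion $\{\Om_\delta\}$ of \autoref{KohnA}, and then to pass from the regularity of the Bergman projection to that of the $\bar\partial$-Neumann operator.

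The subtlety in \autoref{priori estiamte} — equivalently in the estimate \eqref{5.11}, $\lnm f\rnm_k \lesssim \lnm\bar\partial f\rnm_k + \lnm\bar\partial^* f\rnm_k$ for smooth $(0,n-2)$-forms in $\dom(\bar\partial^*)$, and hence $\lnm B_{n-2}f\rnm_k\lesssim\lnm f\rnm_k$ — is that it presupposes $B_{n-2}f$ smooth up to $\partial\Om$. To remove this, I rerun the argument on each $\Om_\delta$. By \autoref{TTT}, for small $\delta>0$ the domain $\Om_\delta$ again carries a family of transverse vector fields satisfying Property $(\widetilde P_{n-1}^{\#})$; being strictly pseudoconvex, $\Om_\delta$ enjoys Kohn's subelliptic estimates, so its $\bar\partial$-Neumann operators and its Bergman projection $B_{n-2}$ are exactly regular and the smoothness hypothesis is met automatically there. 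Crucially, the constant in \autoref{modify basic estimate} is independent of $\delta$ (the Remark following it), as are those in \autoref{uN estimate} and in the benign estimates of \autoref{benign estimate}; tracking these through the proof of \autoref{priori estiamte} gives $\lnm B_{n-2}^{\delta}f\rnm_{k,\Om_\delta}\le C_k\lnm f\rnm_{k,\Om_\delta}$ with $C_k$ independent of $\delta$, for every $f\in C^\infty_{(0,n-2)}(\cl{\Om_\delta})$.

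Next I let $\delta\to 0^+$. For $f\in C^\infty_{(0,n-2)}(\cl{\Om})$, restriction to $\Om_\delta$ together with the uniform bound gives $\lnm B_{n-2}^{\delta}f\rnm_{k,\Om_\delta}\le C_k\lnm f\rnm_{k,\Om}$; since $\Om_\delta\nearrow\Om$, a standard exhaustion argument yields $B_{n-2}^{\delta}f\to B_{n-2}f$ in $L^2_{(0,n-2)}(\Om)$, and the uniform $W^k$ bounds then force $\lnm B_{n-2}f\rnm_{k,\Om}\le C_k\lnm f\rnm_{k,\Om}$, so by density $B_{n-2}$ is exactly regular on $\Om$. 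Finally, exact regularity of $B_{n-2}$ — together with that of $B_{n-1}$, which is elementary since at the top degree $\Box_n=\bar\partial\bar\partial^*$ and $N_n$ solves a Dirichlet-type problem — implies exact regularity of $N_{n-1}$ by the Boas--Straube equivalence between the Bergman projection and the $\bar\partial$-Neumann operator \cite{BS1990} (this is the content behind the formula \textup{(2)} of \cite{BS1990} used in the proof of \autoref{priori estiamte}); alternatively, following \cite{H11}, one may substitute the elliptically regularized operator $N_{n-1,\tau}$ — associated with the form $u\mapsto\lnm\bar\partial u\rnm^2+\lnm\bar\partial^* u\rnm^2+\tau\lnm u\rnm_1^2$ — for $N_{n-1}$ throughout the proof of \autoref{priori estiamte}, verify that all constants are independent of $\tau$, and let $\tau\to 0$.

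The only point requiring care beyond \autoref{priori estiamte} itself is the uniformity of all constants in $\delta$ (respectively in $\tau$); this is precisely why the estimates of Section 2 — including \autoref{uN estimate} and the Remark after \autoref{modify basic estimate} — and the vector-field data of Section 3 were arranged to carry $\delta$-independent bounds, and I expect this step to be technical but routine.
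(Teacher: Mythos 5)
Your proposal is correct and follows essentially the same route as the paper: reduce to exact regularity of $B_{n-2}$ via the Boas--Straube equivalence, run the a priori estimate of \autoref{priori estiamte} on the strictly pseudoconvex exhaustion $\Om_\delta$ of \autoref{KohnA} (where smoothness up to the boundary is automatic by Kohn's subelliptic theory and where \autoref{TTT} guarantees the vector-field hypothesis persists), and pass to the limit $\delta\to 0^+$ using the $\delta$-uniformity of all constants. This is precisely the "standard argument" the paper invokes from Straube's lectures, so no further comment is needed.
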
 

\section{Exact regularity for $\(0, q\)$-forms}

\begin{prop}\label{condition 2 basic estimate2} 
  Let $\Om$ be a bounded smooth pseudoconvex domain in $\mathbb{C}^n$ with $n \geq 3$ and $U$ be a small open neighborhood of some boundary point. Assume that $\lambda \in C^{2} (\cl{\Om})$ satisfies $\mathcal{Y}^{q} \(\lambda \) \geq \left( C(\gamma, \eta, n)+10\right) \sum_{j \leq n} \lmdl L _{j} \lambda  \rmdl ^{2} $ on $\partial\Om \cap U$, 
 for some $1 \leq t \leq n-1$. Let $u = \sum'_J u_J^{\dagger} \bar{w}_J \in L ^{2}_{\(0, q\)}\(\Om\) \cap \dom\(\bar{\partial}\) \cap \dom\(\bar{\partial} ^{*}\)$ with support in $\cl{\Om} \cap U$. Then there exists $C>0$, such that 
  \begin{align*}
\int_\Om \mathcal{X}^{q} \(\lambda\) \big(u, u \big) \, dV \lesssim     \lnm \bar{\partial} u \rnm ^2 + \lnm \bar{\partial}^* u  \rnm^2 .
  \end{align*}
\end{prop}

\begin{proof}
  If $\lambda$ satisfies $\mathcal{Y}^{q} \(\lambda\) \geq \left( C(\gamma, \eta, n)+10\right) \sum_{j \leq n} \lmdl L _{j} \lambda  \rmdl ^{2} $ on $\partial\Om$, then it follows from applying  \autoref{modify basic estimate} to $u$ that
  \begin{equation*}
  \begin{split}
    \lnm \bar{\partial} u \rnm^2 + \lnm \bar{\partial}^* u  \rnm^2  
    \gtrsim & \sideset{}{'}{\sum} _{\lmdl J \rmdl = q} \sum_{j \in I_s} \( - \lambda^\dagger_{jj}  u_J^{\dagger},  u_J^{\dagger} \) + \sideset{}{'}{\sum} _{\lmdl K \rmdl = q - 1} \sum_{i,j = 1} ^{n} \( \lambda^{\dagger}_{ij}   u_{iK}^{\dagger}, u_{jK}^{\dagger} \)
    = \int_\Om \mathcal{X}^{q} \(\lambda\) \big(u, u \big) \, dV.
    \end{split}
  \end{equation*}
\end{proof}

We now derive  the exact regularity for $N_q$.

\begin{thm}\label{priori estiamte2}
  Let $n > 2$ and $\Om \subset \bbC ^{n}$ be a smooth bounded pseudoconvex domain 
 that possesses a family of transverse vector fields satisfying property strong (${\widetilde P}_{q}^{\#}$). 
  Then the $\bar{\partial}$-Neumann operator $N _{q'}$ is continuous on $W _{\(0, q'\)}  ^{k} \(\Om\)$ for any $k \geq 0$ and any $n-1 \geq q'\geq q$.
\end{thm} 

\begin{proof}
It suffices to derive the a priori estimates. 
  By \autoref{increasing}, $\Om$ possesses a family of transverse vector fields satisfying property strong (${\widetilde P}_{q'}^{\#}$) 
  for all $q' \geq q$. Use the downward induction on $q'$ starting with $q'=n-1$. 
  Since $B_{n-1}$ is automatically exactly regular on the $L^2$-Sobolev space, \autoref{B regular} can be applied to $(0, n-1)$-forms. By the discussion below, we can show that $N_{n-1}$ is exactly regular on the $L^2$-Sobolev space. This further implies that $B_{n-2}$ is exactly regular on the $L^2$-Sobolev space. By the induction, since $N_{q+1}$ is exactly regular, so is $B_q$ and thus $N_q$ is also exactly regular. This proves the theorem. 
  
  
 For the proof of the exact regularity of $N_{q'}$ assuming the exact regularity of  $B_{q'}$ and that $\Omega$ possesses a family of transverse vector fields satisfying property strong (${\widetilde P}_{q'}^{\#}$), we only point out the difference from the proof of  \autoref{genuine estimate}, which is in the estimates of $\left[\bar{\partial} ^{*}, D _{X_{\alpha, \varepsilon}} ^{k}\right] N _{\delta}u $ and $ \theta ^{v_{\alpha, \varepsilon}} \wedge v_{\alpha, \varepsilon} D _{X_{\alpha, \varepsilon}} ^{k - 1} N _{\delta} u$.
 %
%
%
For $\left[\bar{\partial} ^{*}, D _{X_{\alpha, \varepsilon}} ^{k}\right] N _{\delta} u$, 
similar to the proof of \autoref{lie bracket basic estimate},
it follows from (ii), (iii) in  \autoref{property Pq},  \autoref{modify basic estimate} and \autoref{condition 2 basic estimate2} that
 \begin{align*}
    \lnm \left[D_{X_{\alpha, \varepsilon}} ^{k}, \bar{\partial}^*\right] N _{\delta} u\rnm^2
   & \lesssim k^2  \sideset{}{'}{\sum} _{I} \sum_{j=1}^n \lnm \overline{\theta ^{v _{\alpha, \varepsilon}} _{j}} \left(D_{X_{\alpha, \varepsilon}} ^{k}N _{\delta} u\right)_{jI} \rnm^2 + C _{\varepsilon, k}\(\lnm \bar{\partial} N _{\delta}u \rnm _{k -1} ^{2}+\lnm \bar{\partial} ^{*} N _{\delta}u \rnm_{k -1} ^{2} + \lnm N _{\delta}u \rnm_{k -1}^2 \) \\
    &\lesssim   k^2 \lnm \left| \theta  ^{v_{\alpha, \varepsilon}} \right| \left| D _{X_{\alpha, \varepsilon}} ^{k} N _{\delta} u  \right| \rnm ^{2} + C _{\varepsilon, k} \lnm u \rnm_{k -1}^2 \\
    %
    &  \lesssim  k^2\varepsilon^2 \left( \mathcal{X}^{q}_{ij} \(\lambda_{\alpha, \varepsilon}\)(D _{X_{\alpha, \varepsilon}} ^{k}N _{\delta}  u),  D _{X_{\alpha, \varepsilon}} ^{k}  N _{\delta}u \right) +  C _{\varepsilon, k} \lnm u \rnm_{k -1}^2  \\
    & \lesssim  k^2\varepsilon^2 \(\lnm \bar{\partial} D _{X_{\alpha, \varepsilon}} ^{k} N _{\delta} u \rnm ^{2} + \lnm \bar{\partial} ^{*} D _{X_{\alpha, \varepsilon}} ^{k} N _{\delta}u   \rnm ^{2} \) + C _{\varepsilon, k} \lnm u \rnm_{k -1}^2  .
  \end{align*}
  For $ \theta ^{v_{\alpha, \varepsilon}} \wedge v_{\alpha, \varepsilon} D _{X_{\alpha, \varepsilon}} ^{k - 1} N _{\delta} u $, we use similar estimates as 
   in the proof of \autoref{dbar lie bracket}: 
    \begin{align*}\label{wentian888}
 \lnm \theta ^{v_{\alpha, \varepsilon}} \wedge v_{\alpha, \varepsilon} D _{X_{\alpha, \varepsilon}} ^{k-1 } N _{\delta} u \rnm ^{2}
    \leq  \lnm \left| \theta ^{v_{\alpha, \varepsilon}} \right| \left| v_{\alpha, \varepsilon} D _{X_{\alpha, \varepsilon}} ^{k-1 } N _{\delta} u \right|\rnm ^{2} 
     \lesssim \lnm \left| \theta  ^{v_{\alpha, \varepsilon}} \right| \left| D _{X_{\alpha, \varepsilon}} ^{k} N _{\delta} u  \right| \rnm ^{2} +  C _{\varepsilon, k}\lnm  u\rnm _{k -1} ^{2} 
  \end{align*} 
  and the rest of the proof is the same as above.
   \end{proof} 

{\bf Acknowledgement}: We thank Xinchen Duan, Zheng-Chao Han, Phillip S. Harrington, Mijia Lai, Yue Zhang for very helpful discussions. The second author is supported in part by Zhejiang Provincial Natural Science Foundation of China (Grant No. LQKWL26A0201).  
The third author is supported in part by the China
Postdoctoral Science Foundation (Grant No. 2024M762395).

\bigskip

{\bf Declarations:} The authors declare no conflict of interest.

\end{document}